\documentclass[12pt, reqno]{amsart}
\usepackage{mathrsfs}
\usepackage{amsfonts}
\usepackage{}
\setlength{\textwidth}{6 in} \setlength{\textheight}{8.2 in}
\hoffset=-46pt
\usepackage{amsmath}
\usepackage{amsthm}
\usepackage{amssymb}
\usepackage[compress,numbers]{natbib}
\usepackage{graphicx}
\usepackage{comment}
\usepackage{tikz}
\newtheorem{theorem}{Theorem}[section]
\newtheorem{lemma}[theorem]{Lemma}

\theoremstyle{definition}
\newtheorem{definition}[theorem]{Definition}

\newtheorem{remark}[theorem]{Remark}
\newtheorem{problem}[theorem]{Problem}
\numberwithin{equation}{section}

%    Absolute value notation

%    Blank box placeholder for figures (to avoid requiring any
%    particular graphics capabilities for printing this document).

\begin{document}
	
	\title{The Gaussian  Minkowski type  problem for $C$-pseudo-cones }
	
	%    Information for first author
	\author{Junjie Shan, Wenchuan Hu, Wenxue Xu$^{*}$ }
	
	\address{School of Mathematics, Sichuan University, Chengdu, Sichuan, 610000, PR China}

	\email{shanjjmath@163.com,   wenchuan@scu.edu.cn
	}
	
	\address{School of Mathematics and Statistics,
		Southwest University,
		Chongqing 400715, PR China}

	\email{xuwenxue83@swu.edu.cn
	}

	%\date{\today}
	%\date{November 2 2008}
	\thanks{{\it 2020 Math Subject Classifications}:  52A40, 52A38}
	
	\thanks{{\it Keywords}:  pseudo-cone, Gaussian surface area measure,  Minkowski problem,  Gaussian Minkowski problem, log-Minkowski problem}

	\thanks{This paper was supported by  the  Natural Science Foundation of Chongqing
		(No. cstc2021jcyj-msxmX1057).
	}

	\thanks{ $^{*}$ Corresponding author}
\begin{abstract}
The Gaussian surface area measure and the Gaussian cone measure  for $C$-pseudo-cones are introduced and their corresponding Gaussian Minkowski  problem and Gaussian log-Minkowski  problem are proposed, respectively.
The existence and  uniqueness of solutions to these problems for $C$-pseudo-cones  are established.

\end{abstract}

\maketitle

\section{Introduction}
The classical Minkowski problem, first studied by Minkowski for polytopes in 1897,   aims to answer the question: What are the necessary and sufficient conditions for a Borel measure on the unit sphere to be the surface area measure of a \emph{convex body} (i.e., compact convex subsets of $\mathbb R^n$ with nonempty interiors)?
In recent decades, the theory of
 Minkowski problems,  connecting geometric invariants to corresponding geometric measures,  has expanded considerably, some significant advances have been made in the framework of the
Brunn-Minkowski theory and its extensions for convex bodies, for example,  the $L_p$ Minkowski problem \cite{Lutwak1993}, the logarithmic Minkowski problem \cite{LYZ2013JAMS},  the dual Minkowski problem \cite{HLYZdual}, the Gauss Minkowski problem \cite{Huang2021}, the chord Minkowski problem \cite{ChordCPAM} and so on.
 The new paper \cite{ZhangBAMS} surveys classical and new results on Minkowski problems and their related subjects.

%The theory of Minkowski problems studies the characterization of a geometric measure of a convex body,
% and the theory goes back to Minkowski in 1897 and has developed rapidly in recent years. Some landmarks in the history of Minkowski-type problems are the $L_p$ Minkowski problem \cite{Lutwak1993}, the logarithmic Minkowski problem \cite{LYZ2013JAMS}, the dual Minkowski problem \cite{HLYZdual}, the Gauss Minkowski problem \cite{Huang2021}, the chord Minkowski problem \cite{ChordCPAM} and so on in the framework of
%Brunn-Minkowski theory of convex bodies.

Unlike  the Minkowski problems associated with convex bodies mentioned above,  another type of  Minkowski problem related to unbounded closed convex sets in $\mathbb R^n$ has   also recently been studied  by Li-Ye-Zhu \cite{Lyzdualcone}, Schneider \cite{schneider2021,schneider pescone,schneider weighted cone,schneider weighted},  Zhang \cite{zhang asyp}, Ai-Yang-Ye \cite{ai lpdual} following from the pioneering works of establishing the Brunn-Minkowski theory for $C$-coconvex sets by  Milman-Rotem \cite{MilmanRotem}, Schneider \cite{schneider 2018}, and Yang-Ye-Zhu \cite{yangyezhuLp2022}. In this paper, unbounded closed convex sets of interest are $C$-pseudo-cones
introduced  by Schneider \cite{schneider pescone, schneider weighted cone}. The set of pseudo-cones can be seen as a counterpart to the set of convex bodies containing the origin in the interior. A\emph{ pseudo-cone} $K$ is a  nonempty closed convex set  that does not contain the origin and satisfies $\lambda K\subseteq K$ for $\lambda \ge1$. Its \emph{recession cone} is   given by
$$
\operatorname{rec} K=\left\{z \in \mathbb{R}^{n}: K+z \subseteq K\right\}.$$
A \textit{$C$-pseudo-cone} is a pseudo-cone with recession cone $C$.   Intuitively, its asymptotic behavior is controlled by $C$. Then, it is easy to see that $C$-pseudo-cones are a special class of unbounded convex sets.

 For a cone $C$, we always assume that it is pointed (not  containing any line) and  $n$-dimensional. Let $C^{\circ}$  be the \textit{dual cone} of a convex cone $C$ defined by
$ C^{\circ}=\left\{x \in \mathbb{R}^{n}:\langle x, y\rangle \leq 0 \text { for all } y \in C\right\},$
where $\langle x, y\rangle$ denotes the inner product of $x,y\in\mathbb R^n$.
For a given $C$, the subsets
\begin{align*}
 \Omega_{C}=S^{n-1}\cap \operatorname{int} C, \quad\Omega_{C^{\circ}}=S^{n-1}\cap \operatorname{int} C^{\circ}
 \end{align*}
of the unit sphere $ S^{n-1}$ (where $\text{int} \ C$ is the interior of $C$) will play an important role.

For $C$-pseudo-cones, the Minkowski problem asks: \emph{What are the necessary and sufficient conditions for a Borel measure $\mu$ on $\Omega_{C^{\circ}}$ to be the surface area measure of a $C$-pseudo-cone?} However, the solutions to this problem have not yet been completely solved. Unlike the surface area measure of convex bodies, that of a $C$-pseudo-cone may become infinite, presenting substantial challenges in resolving the Minkowski problem. Nevertheless, significant progress has been achieved for finite Borel measures $\mu$ on $\Omega_{C^{\circ}}$. Notably, Schneider \cite{schneider2021} demonstrated that every nonzero finite $\mu$ is the surface area measure of a $C$-pseudo-cone $K$, where the complement $C\backslash K$ has finite volume. The $L_{p}$-Minkowski type theorem for pseudo-cones was further studied by Yang-Ye-Zhu \cite{yangyezhuLp2022}, with versions in the dual setting  developed by Li-Ye-Zhu \cite{Lyzdualcone}.
The $(p,q)$-dual curvature measures were later introduced by Ai-Yang-Ye \cite{ai lpdual}.

However, for infinite measures, necessary and sufficient conditions are still unknown. As Schneider pointed out in \cite{schneider weighted cone, schneider weighted}, the shape of a  $C$-pseudo-cone is strongly influenced by the shape of $C$ if the distance from the origin tends to infinity. In order to neglect those regions where the effect on the shape of a $C$-pseudo-cone is relatively weak, yielding a finite measure, it seems that suitable weightings are even more desirable than in the context of convex bodies. By this insight, the \emph{homogeneous-weighted surface area measure} $S_{n-1}^\Theta(K,\cdot)$ of  a $C$-pseudo-cone $K$ was considered by Schneider \cite{schneider weighted cone}, which is defined by
\begin{align}\label{weight-Schneider}
S_{n-1}^\Theta(K,\omega)=\int_{\nu_{K}^{-1}(\omega)}\Theta(x) d \mathcal{H}^{n-1}(x)
\end{align}
for Borel set $\omega\subset\Omega_{C^{\circ}}$, where $\Theta:C \setminus \{o\}\to(0,\infty)$ is continuous and homogeneous of degree $-q$ with $n-1<q<n$,  $\nu_{K}$ is the outer unit normal vector  of $K$ at points $x\in \partial K$. Then the measure $S_{n-1}^\Theta(K,\cdot)$ is finite and the weighted Minkowski theorem for $C$-pseudo-cones is obtained. For further references involving pseudo-cones, we refer to \cite{sz2024asyp,    xulileng 2023}.

Inspired by the weighted Minkowski theorem for $C$-pseudo-cones \cite{schneider weighted cone,schneider weighted}, a natural question arises:
\emph{What other weightings, if any, could generate a finite measure on $\Omega_{C^{\circ}}$?} Notably, the Gaussian density emerges as a natural candidate in this non-homogeneous setting. Indeed, by substituting the weighting  $\Theta(x)$ in (\ref{weight-Schneider}) with the Gaussian density $e^{-\frac{|x|^{2}}{2}}$,  we define the \textit{Gaussian surface area measure} of a $C$-pseudo-cone $K$ by
\begin{equation}
    S_{\gamma^{n}}(K,\eta)=\frac{1}{(\sqrt{2 \pi})^{n}} \int_{\nu_{K}^{-1}(\eta)} e^{-\frac{|x|^{2}}{2}} d \mathcal{H}^{n-1}(x)
\end{equation}
for Borel set $\eta\subset\Omega_{C^{\circ}}$, where $\gamma^{n}$ denotes the standard Gaussian probability measure on  Euclidean space $\mathbb{R}^{n}$. The measure $S_{\gamma^{n}}(K,\cdot)$ is clearly finite. This naturally leads to the formulation of the corresponding Minkowski problem,  called \emph{the Gaussian Minkowski problem for $C$-pseudo-cones}, which we state as follows:

\begin{problem}\label{pro}Given a nonzero finite Borel measure $\mu$ on
$\Omega_{C^{\circ}}$, is there a $C$-pseudo-cone $K$ such that $\mu=S_{\gamma^{n}}(K,\cdot)$?
\end{problem}

The Gaussian Minkowski problem in the context of convex bodies was first investigated by Huang-Xi-Zhao  \cite{Huang2021}, with its $L_p$ extension subsequently proposed by  Liu  \cite{Liu2022}.   Feng-Liu-Xu \cite{FLX2023}  later established results for the non-symmetric case. For recent advances in this field,  we refer  to \cite{zhao2023, FHX2023,hulog2024,Liv2019}.

However, the assertion in Problem \ref{pro}  fails to hold for arbitrary cones.  Indeed, the variational approaches employed in both the homogeneous weighted Minkowski problem \cite{ schneider weighted cone,schneider weighted} and the classical Gaussian Minkowski problem for convex bodies \cite{Huang2021} prove inadequate for resolving Problem \ref{pro}. To overcome this limitation, we introduce a new variational functional, through which we establish the existence of  solutions to the (normalized) Gaussian Minkowski problem for $C$-pseudo-cones as follows.

\begin{theorem}
	Let $\mu$ be a nonzero, finite Borel measure on $\Omega_{C^{\circ}}$. Then there exists a $C$-pseudo-cone $K$ with
	$$\mu=cS_{\gamma^{n}}(K,\cdot),$$
	where $c=\frac{\int_{\Omega}\bar{h}_{K}d\mu}{\gamma^{n}(K)}.$
\end{theorem}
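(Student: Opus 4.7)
\smallskip
\noindent\textbf{Proof plan.} The plan is a variational argument following the structure of Huang--Xi--Zhao \cite{Huang2021} for convex bodies, but with a ratio-type functional adapted to pseudo-cones, of the form
\[
\Phi(K) \;=\; \log\Bigl|\int_{\Omega_{C^\circ}}\bar h_K(u)\,d\mu(u)\Bigr| \;-\; \log\gamma^{n}(K),
\]
defined on the class of $C$-pseudo-cones. Its Euler--Lagrange condition is designed to reproduce the normalized Minkowski equation with precisely the multiplier $c$ appearing in the theorem, which is the reason such a ratio-type functional must be introduced in place of the standard one used for convex bodies.

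\smallskip
\noindent\emph{Variational formulas and Euler--Lagrange.} For a continuous $\phi\colon\Omega_{C^\circ}\to\mathbb{R}$ of compact support, form the Wulff family $K_t=[\bar h_K+t\phi]$. I would first establish
\[
\tfrac{d}{dt}\big|_{t=0}\!\int\bar h_{K_t}\,d\mu=\int\phi\,d\mu,\qquad
\tfrac{d}{dt}\big|_{t=0}\gamma^{n}(K_t)=\int\phi\,dS_{\gamma^{n}}(K,\cdot).
\]
The first identity is immediate from dominated convergence; the second is the Gaussian analogue of the classical Aleksandrov variational formula, which I would derive via the coarea formula over a tubular neighborhood of $\partial K$, with absolute convergence on the unbounded surface $\partial K$ guaranteed by the rapid decay of $e^{-|x|^{2}/2}$. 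Setting $\tfrac{d}{dt}|_{t=0}\Phi(K_t)=0$ for every admissible $\phi$ then yields
\[
\frac{\int\phi\,d\mu}{\int\bar h_K\,d\mu}=\frac{\int\phi\,dS_{\gamma^{n}}(K,\cdot)}{\gamma^{n}(K)},
\]
which rearranges to $\mu=c\,S_{\gamma^{n}}(K,\cdot)$ with $c=\int\bar h_K\,d\mu/\gamma^{n}(K)$, exactly the form claimed.

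\smallskip
\noindent\emph{Existence of the minimizer, and the main obstacle.} To produce a critical point of $\Phi$, I would minimize it over $C$-pseudo-cones after a suitable normalization (for instance fixing $|\int\bar h_{K}\,d\mu|=1$). Since $C$-pseudo-cones are unbounded, the standard Blaschke selection does not apply; instead I would invoke a pseudo-cone selection theorem such as those developed in \cite{schneider2021,schneider weighted cone} to extract a subsequence of a minimizing sequence $\{K_i\}$ converging uniformly on compact subsets of $\mathbb{R}^{n}$. The chief difficulty is to rule out degeneration: a minimizing sequence could either collapse onto $\partial C$, forcing $|\int\bar h_{K_i}\,d\mu|\to 0$, or escape to infinity along $C$, forcing $\gamma^{n}(K_i)\to 0$. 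Both degeneracies are precluded by a uniform upper bound on $\Phi(K_i)$ combined with the finiteness of $\mu$ and, crucially, the integrability of the Gaussian density at infinity---this is the very structural feature of the Gaussian weighting that makes the problem tractable, in contrast with the unweighted Minkowski problem for $C$-pseudo-cones, where such compactness is unavailable. Verifying that the limit is a nondegenerate $C$-pseudo-cone and that both $\int\bar h_{K_i}\,d\mu$ and $\gamma^{n}(K_i)$ pass to the limit then closes the argument.
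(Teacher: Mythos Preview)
Your variational functional is mis-designed because of a sign error in the second variational formula. For the Wulff family $K_t=[\bar h_K+t\phi]$ one actually has
\[
\frac{d}{dt}\Big|_{t=0}\gamma^{n}(K_t)\;=\;-\int_{\omega}\phi\,dS_{\gamma^{n}}(K,\cdot),
\]
with a minus sign (increasing $\bar h_K$ pushes the supporting halfspaces further from the origin, hence shrinks $K$); this is Lemma~\ref{vari formula} in the paper. Plugging the correct sign into your ratio functional $\Phi(K)=\log\!\int\bar h_K\,d\mu-\log\gamma^n(K)$ gives
\[
\frac{d}{dt}\Big|_{t=0}\Phi(K_t)=\frac{\int\phi\,d\mu}{\int\bar h_K\,d\mu}+\frac{\int\phi\,dS_{\gamma^{n}}(K,\cdot)}{\gamma^{n}(K)},
\]
which can vanish for all $\phi$ only if $\mu=-c\,S_{\gamma^{n}}(K,\cdot)$, an impossibility for nonnegative measures. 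Equivalently, $\Phi$ has no critical point at all: under scaling $K\mapsto\lambda K$ one has $\Phi(\lambda K)=\log\lambda+\log\!\int\bar h_K\,d\mu-\log\gamma^{n}(\lambda K)\to-\infty$ as $\lambda\to0$ and $\to+\infty$ as $\lambda\to\infty$. Your proposed normalization does not rescue this, because the ratio is not scale-invariant and a constrained minimizer would satisfy a Lagrange-multiplier equation, not the unconstrained Euler--Lagrange you wrote down. The correct object is the \emph{product} functional
\[
I_\mu(f)=\gamma^{n}([f])\int_{\omega} f\,d\mu,
\]
whose maximum over $C^+(\omega)$ yields precisely $\mu=c\,S_{\gamma^{n}}(K,\cdot)$ with $c=\int\bar h_K\,d\mu/\gamma^n(K)$; the sign of the $\gamma^n$-variation and the sign in the product rule now cooperate rather than cancel.

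A second structural point: the paper does not run the variational argument directly over all $C$-pseudo-cones. Instead it first solves the problem on $\mathcal{K}(C,\omega)$ for each compact $\omega\subset\Omega$ (Lemma~\ref{compact cone minkowski}), where both the Wulff-shape variational formula and the weak continuity of $S_{\gamma^n}$ are available, and then exhausts $\Omega$ by an increasing sequence $\omega_i$, passing to the limit via a uniform lower bound on the optimal values $I_{\mu_i}(\bar h_{K_i})$ (Lemma~\ref{uniesti}). Your plan to work globally on $\Omega$ would require variational formulas and weak-convergence statements for general $C$-pseudo-cones that are not established here; the two-step approximation is not a mere convenience but the device that makes the analysis go through. (As a minor aside, $\tfrac{d}{dt}|_{t=0}\int\bar h_{K_t}\,d\mu=\int\phi\,d\mu$ is not ``immediate from dominated convergence'', since $\bar h_{[\bar h_K+t\phi]}\ne\bar h_K+t\phi$ in general; the paper sidesteps this by evaluating $I_\mu$ at $\bar h_K+t\phi$ itself rather than at the support function of the Wulff shape.)
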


Since  the Gaussian surface area measure does not have any homogeneity, the normalizing factor $c$ in the equation above can not be removed.
The  constant factors in these solutions are commonly recognized as the standard normalization for Minkowski problems involving non-homogeneous geometric measures and probability measures, such as the Orlicz-Minkowski type problems \cite{OrliczMP, ZhangBAMS}.

We now turn to the uniqueness part of the Gaussian Minkowski problem for $C$-pseudo-cones. As we  will show in Section 5,
similar to the case of the Gaussian Minkowski problem for convex bodies, solutions to this problem are also not unique without additional conditions. However, when we restrict our consideration to $C$-pseudo-cones sharing identical Gaussian volumes, the uniqueness part of the Gaussian Minkowski problem for $C$-pseudo-cones can be established.

\begin{theorem}\label{uniq}
	Suppose $K,L\in\mathcal{K}(C,\omega)$ for some nonempty compact $\omega\subset\Omega_{C^{\circ}}$, $\gamma^{n}(K)=\gamma^{n}(L)$, if
	$$S_{\gamma^{n}}(K,\cdot)=S_{\gamma^{n}}(L,\cdot),$$
	 then $K=L$.
\end{theorem}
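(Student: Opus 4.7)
The plan is to derive uniqueness from a Gaussian Minkowski-type integral inequality for $C$-pseudo-cones whose equality case characterizes $K=L$, and then invoke both hypotheses (equal Gaussian volumes and equal Gaussian surface area measures) simultaneously to force the inequality to collapse.

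First, I would introduce the interpolating family $\{K_t\}_{t \in [0,1]}$ of $C$-pseudo-cones obtained as Wulff shapes with support functions $\bar h_t = (1-t)\bar h_K + t\bar h_L$ on $\Omega_{C^\circ}$; since $K,L \in \mathcal{K}(C,\omega)$ share the same compact $\omega$, each $K_t$ remains in $\mathcal{K}(C,\omega)$. Next, derive the first-order variational formula
$$\frac{d}{dt}\gamma^{n}(K_t) \;=\; \int_{\Omega_{C^\circ}} (\bar h_L - \bar h_K)\, dS_{\gamma^{n}}(K_t,\cdot)$$
from the definition of $S_{\gamma^{n}}(K_t,\cdot)$ via a standard boundary-layer computation on $\partial K_t$, adapting the derivation used in \cite{Huang2021} to the pseudo-cone framework (where $\bar h_K$ replaces the classical support function). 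Combined with a suitable strict concavity of $\varphi(t):=\gamma^{n}(K_t)$ on $[0,1]$, this yields the Gaussian Minkowski-type inequality
$$\gamma^{n}(L)-\gamma^{n}(K) \;\le\; \int_{\Omega_{C^\circ}} (\bar h_L - \bar h_K)\, dS_{\gamma^{n}}(K,\cdot),$$
with equality if and only if $K=L$.

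The conclusion then follows by adding this inequality to its $K \leftrightarrow L$ counterpart: the two left-hand sides sum to zero by $\gamma^{n}(K)=\gamma^{n}(L)$, while the two right-hand sides sum to $\int (\bar h_L - \bar h_K)\,d(S_{\gamma^{n}}(K,\cdot)-S_{\gamma^{n}}(L,\cdot))$, which also vanishes by $S_{\gamma^{n}}(K,\cdot)=S_{\gamma^{n}}(L,\cdot)$. Hence both inequalities must be equalities, and the equality characterization forces $\bar h_K=\bar h_L$ on $\Omega_{C^\circ}$, so $K=L$.

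The main obstacle is the concavity step. The Gaussian Brunn--Minkowski inequality is known to fail for general convex bodies, which is precisely why uniqueness for the classical Gaussian Minkowski problem requires auxiliary hypotheses such as symmetry or smallness. In the present $C$-pseudo-cone setting I expect the structural constraints---namely $K_t\subset C$ for every $t$, the common recession cone $C$, and the support functions living on a common compact $\omega\subset\Omega_{C^\circ}$---to supply the rigidity needed to recover strict concavity of $\varphi$ (equivalently, the Minkowski-type inequality displayed above) on $\mathcal{K}(C,\omega)$. Verifying this carefully via an Ehrhard-type argument, combined crucially with the volume-matching hypothesis $\gamma^{n}(K)=\gamma^{n}(L)$, will be the technically most delicate part of the argument.
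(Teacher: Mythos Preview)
Your strategy is exactly the paper's: derive a Minkowski-type inequality from a variational formula plus Ehrhard, with equality case $K=L$, then use the two hypotheses and the $K\leftrightarrow L$ symmetry to force equality. Two corrections are needed, though neither breaks the argument.

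First, the variational formula carries the opposite sign: since increasing $\bar h$ pushes the Wulff shape away from the origin and hence \emph{decreases} $\gamma^n$, one has (this is the paper's Lemma~\ref{vari formula})
\[
\left.\frac{d}{dt}\right|_{t=0}\gamma^n(K_t)\;=\;-\int_{\omega}(\bar h_L-\bar h_K)\,dS_{\gamma^n}(K,\cdot).
\]
Second, what Ehrhard gives is not plain concavity of $t\mapsto\gamma^n(K_t)$ but concavity of $\Phi^{-1}\!\circ\gamma^n$ (hence log-concavity of $\gamma^n$) along the Minkowski interpolation $(1-t)K+tL$, which sits inside $K_t$; the resulting inequality is
\[
\frac{1}{\gamma^n(K)}\int_{\omega}(\bar h_K-\bar h_L)\,dS_{\gamma^n}(K,\cdot)\;\ge\;\log\gamma^n(L)-\log\gamma^n(K),
\]
with equality iff $K=L$. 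Under $\gamma^n(K)=\gamma^n(L)$ the right side is $0$ either way, so your symmetry-and-cancellation step goes through unchanged. Finally, your worry that Gaussian Brunn--Minkowski fails in general is misplaced here: Ehrhard's inequality holds for arbitrary convex sets (with equality characterized by Shenfeld--van Handel), so no special pseudo-cone structure is needed for the concavity step; the restriction to $\mathcal{K}(C,\omega)$ is used only to make the Wulff-shape variation and the comparison $K_t\supseteq(1-t)K+tL$ work cleanly.
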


Here $\mathcal{K}(C,\omega)$ represents a specialized class of $C$-pseudo-cones  whose geometric configurations are intrinsically determined  by the compact set  $\omega\subset\Omega_{C^{\circ}}$ (see Section \ref{section2} for details). It is worth pointing out that
the volume restriction in Theorem \ref{uniq} can be relaxed to $\gamma^{n}(K),\gamma^{n}(L)\le \frac{1}{2}\gamma^{n}(C) $ in discrete cases,
while still ensuring the uniqueness result.
%  If without the volume constraints, the uniqueness result for solutions exhibits fundamentally distinct behavior compared to classical setting:
%\begin{theorem}
%	For any pointed, $n$-dimensional closed convex cone $C$ in $\mathbb{R}^{n}$ and any nonempty compact $\omega\subset\Omega_{C^{\circ}}$, 	there exist $K,L\in\mathcal{K}(C,\omega)$,  such that $S_{\gamma^{n}}(K,\cdot)=S_{\gamma^{n}}(L,\cdot)$, but $K\neq L$.
%\end{theorem}

Another fundamental type of Minkowski problem---the logarithmic Minkowski problem,  which characterizes the cone-volume measure, arises as  the limiting case $p=0$ of the general  $L_p$ Minkowski problem. For a convex body $K$ in $\mathbb{R}^{n}$, the cone-volume measure $V_{K}$ is a Borel measure on the unit sphere $S^{n-1}$ defined by
\begin{equation*}
	 V_{K}(\omega)=\frac{1}{n}\int_{\nu_{K}^{-1}(\omega)}\langle x,\nu_{K}(x)\rangle d \mathcal{H}^{n-1}(x)
\end{equation*}
for every Borel set $\omega \subset S^{n-1}$. The term ``cone-volume measure'' originates from the fact that $V_{K}(\omega)$ is the volume of the union of  segments connecting the origin to $\nu_{K}^{-1}(\omega)$.    For more references involving the cone-volume measure for convex bodies, see, e.g., \cite{BH logcenterbody,KH logstable,BHZ log2016,CLZ log2019,Stancu log 2002,Zhu log2014}.

We consider the Gaussian cone measure for $C$-pseudo-cones. Let $K$ be a $C$-pseudo-cone in $\mathbb{R}^{n}$, the \textit{Gaussian cone measure} $C_{\gamma^{n}}(K,\cdot)$ is  a Borel measure on $\Omega_{C^{\circ}}$  defined by
\begin{equation*}
	C_{\gamma^{n}}(K,\eta):=\frac{1}{(\sqrt{2 \pi})^{n}} \int_{\nu_{K}^{-1}(\eta)}|\langle x,\nu_{K}(x) \rangle| e^{-\frac{|x|^{2}}{2}} d \mathcal{H}^{n-1}(x)=-\int_{\eta}h_{K}(u)d	 S_{\gamma^{n}}(K,u)
\end{equation*}
for every Borel set $\eta\subset\Omega_{C^{\circ}}$, where $h_{K}$ denotes the support function of $K$. Geometrically,  the Gaussian cone measure  quantifies  the product of the height  (via the support function) and the Gaussian surface area. However,  in contrast to  the classical cone-volume measure,  $C_{\gamma^{n}}(K,\eta)$ does not represent the true Gaussian  volume  of the set of  segments connecting the origin and $\nu_{K}^{-1}(\omega)$,  due to the lack of   homogeneity. This distinction motivates our use of the term cone measure rather than cone-volume measure.

The Gaussian cone measure of $C$-pseudo-cones can actually be generated by the variation of Gaussian volume   with respect to a logarithmic
family of Wulff shapes, and thus the corresponding Minkowski problem  is termed  the \textit{Gaussian log-Minkowski problem}.  Notably, due to the restrictive geometric structure imposed by $C$ on $C$-pseudo-cones, the log-Minkowski problem associated with $C$-pseudo-cones does not require the subspace concentration condition, a critical constraint in the framework of convex bodies.
\begin{theorem}
	Let $\mu$ be a nonzero, finite Borel measure on $\Omega_{C^{\circ}}$. Then there exists a $C$-pseudo-cone $K$ such that
	$$\frac{C_{\gamma^{n}}(K,\cdot)}{\gamma^{n}(K)}=\mu.$$
\end{theorem}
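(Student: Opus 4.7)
The plan is to use a variational method, adapting the standard approach for logarithmic Minkowski problems to the Gaussian setting on $C$-pseudo-cones. I would introduce the functional
$$\Psi(K) \;=\; \int_{\Omega_{C^{\circ}}}\log\bar{h}_{K}(u)\,d\mu(u)\;+\;\log\gamma^{n}(K),$$
defined on the class of $C$-pseudo-cones $K$ for which $\bar{h}_{K}=-h_{K}$ is continuous and strictly positive on $\Omega_{C^{\circ}}$. The claim is that a maximizer of $\Psi$ over a suitable subclass (for instance one of the compactness-friendly classes $\mathcal{K}(C,\omega)$ used in Theorem \ref{uniq}, letting $\omega$ exhaust $\Omega_{C^\circ}$) solves the Gaussian log-Minkowski problem.

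First I would establish the logarithmic variational formula for Gaussian volume. For a $C$-pseudo-cone $K$ and a continuous $f:\Omega_{C^{\circ}}\to\mathbb{R}$, let $K_{t}$ denote the Wulff shape generated by $\bar{h}_{K}\cdot e^{tf}$. The standard Gaussian variational formula for Wulff shapes -- the $C$-pseudo-cone version of which is developed in the proof of the earlier Gaussian Minkowski theorem of the paper -- together with the definitional identity $dC_{\gamma^{n}}(K,\cdot)=\bar{h}_{K}\,dS_{\gamma^{n}}(K,\cdot)$ yields
$$\frac{d}{dt}\bigg|_{t=0}\log\gamma^{n}(K_{t}) \;=\; -\,\frac{1}{\gamma^{n}(K)}\int_{\Omega_{C^{\circ}}}f(u)\,dC_{\gamma^{n}}(K,u).$$
Combined with the elementary $\frac{d}{dt}\big|_{t=0}\int\log(\bar{h}_{K}e^{tf})\,d\mu=\int f\,d\mu$, this shows that the Euler--Lagrange equation for $\Psi$ under logarithmic Wulff perturbations is exactly $\mu=C_{\gamma^{n}}(K,\cdot)/\gamma^{n}(K)$, which is the equation we need.

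Next I would prove existence of a maximizer by excluding both degeneration modes. If $\bar{h}_{K_{n}}$ collapses toward zero on a set of positive $\mu$-measure, then $\int\log\bar{h}_{K_{n}}\,d\mu\to -\infty$ while $\gamma^{n}(K_{n})\le\gamma^{n}(C)$ remains bounded, so $\Psi(K_{n})\to -\infty$; if instead $K_{n}$ is pushed off to infinity along $C$, then $\int\log\bar{h}_{K_{n}}\,d\mu$ grows only logarithmically whereas $\log\gamma^{n}(K_{n})$ decays at Gaussian rate, so again $\Psi(K_{n})\to -\infty$. A Blaschke-type selection theorem for $C$-pseudo-cones, in the spirit of those developed by Schneider and Yang--Ye--Zhu, then extracts a limit $K_{0}$ of a maximizing sequence that is itself a proper $C$-pseudo-cone; continuity of $\gamma^n$ and of the logarithmic integral under this convergence yields $\Psi(K_{0})=\sup\Psi$, after which the variational identity of Step~2 gives the conclusion.

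The main obstacle I anticipate is this compactness step -- specifically, guaranteeing that $\bar{h}_{K_{0}}$ remains strictly positive on the support of $\mu$ and that the maximizing sequence does not escape to infinity in a subtler way than by rigid translation. What makes the argument tractable, and what accounts for the absence of the subspace-concentration condition that is indispensable in the convex-body Gaussian log-Minkowski problem, is the structural containment $K\subseteq C$ coupled with $\gamma^{n}(C)<\infty$: these built-in geometric constraints preemptively rule out the kind of measure concentration along linear subspaces that would otherwise force a nontrivial hypothesis on $\mu$.
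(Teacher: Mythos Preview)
Your proposal is correct and follows essentially the same route as the paper: your functional $\Psi$ is precisely $\log L_\mu$ for the paper's $L_\mu(f)=\gamma^n([f])\exp\int\log f\,d\mu$, the logarithmic variational formula you describe is the paper's Lemma~\ref{vari formulalog}, and your two degeneration modes ($\bar h_{K_i}\to 0$ and $K_i$ escaping to infinity with Gaussian decay beating logarithmic growth) match the paper's analysis of $r_i\to 0$ and $r_i\to\infty$ in Lemma~\ref{compact cone minkowskilog}. The only point you leave slightly implicit is the uniform lower bound on $\sup L_{\mu_i}$ across the exhaustion $\omega_i\uparrow\Omega$ (the paper's Lemma~\ref{uniestilog}, obtained by evaluating at the constant function $1$), which is what prevents the sequence of per-$\omega_i$ maximizers from degenerating as $i\to\infty$; but you correctly flag this compactness step as the main obstacle.
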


The following theorem shows that the solutions to   both the Gaussian Minkowski problem and Gaussian log-Minkowski problem  for $C$-pseudo-cones are not unique.
\begin{theorem}
	Let $C\subset \mathbb{R}^{n}$ be a pointed, $n$-dimensional closed convex cone, and let $\omega\subset\Omega_{C^{\circ}}$ be a  nonempty compact  set.	Then  the following hold:
	\begin{enumerate}
		\item There exist $K_1, L_1 \in \mathcal{K}(C,\omega)$, such that
		\[
		S_{\gamma^n}(K_1,\cdot) = S_{\gamma^n}(L_1,\cdot) \quad \text{but} \quad K_1 \neq L_1.
		\]
		
		\item There exist  $K_2, L_2 \in \mathcal{K}(C,\omega)$, such that
		\[
		C_{\gamma^n}(K_2,\cdot) = C_{\gamma^n}(L_2,\cdot) \quad \text{but} \quad K_2 \neq L_2.
		\]
	\end{enumerate}
\end{theorem}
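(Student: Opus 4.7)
The plan is to exhibit an explicit one-parameter family of $C$-pseudo-cones in $\mathcal{K}(C,\omega)$ whose Gaussian surface area measure (and Gaussian cone measure) is a Dirac mass at a single point of $\omega$ whose weight is a continuous, non-monotone scalar function of the parameter; the non-uniqueness statements (1) and (2) will then reduce to the elementary failure of injectivity of a real-valued function on $(0,\infty)$. This construction exploits the lack of homogeneity of the Gaussian density, which is precisely what obstructs uniqueness for pseudo-cones of distinct Gaussian volumes (cf.\ Theorem \ref{uniq}).

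Concretely, I would fix any $v\in\omega$, so that $v\in\operatorname{int} C^{\circ}$, and set
\[
K_{\lambda}=\{x\in C:\langle x,v\rangle\leq -\lambda\},\qquad \lambda>0.
\]
The first step is to verify that $K_{\lambda}$ is a $C$-pseudo-cone: the origin is excluded since $\langle o,v\rangle=0>-\lambda$; the identity $K_{\lambda}=\lambda K_{1}$ yields $\mu K_{\lambda}\subseteq K_{\lambda}$ for $\mu\geq 1$; and a direct computation using $\langle z,v\rangle\leq 0$ on $C$ gives $\operatorname{rec}K_{\lambda}=C$. Next, I would decompose $\partial K_{\lambda}=F_{\lambda}\cup(\partial C\cap K_{\lambda})$, where $F_{\lambda}=\{x\in C:\langle x,v\rangle=-\lambda\}$ is the flat face with outer normal $v$, and observe that every outer normal along $\partial C\cap K_{\lambda}$ lies in $\partial C^{\circ}$. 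Since $\Omega_{C^{\circ}}\subset\operatorname{int} C^{\circ}$ is disjoint from $\partial C^{\circ}$ and $F_{\lambda}\cap\partial C$ is $\mathcal{H}^{n-1}$-negligible, this yields
\[
S_{\gamma^{n}}(K_{\lambda},\cdot)=g(\lambda)\,\delta_{v},\qquad g(\lambda)=\frac{1}{(\sqrt{2\pi})^{n}}\int_{F_{\lambda}}e^{-|x|^{2}/2}\,d\mathcal{H}^{n-1}(x),
\]
and in particular places $K_{\lambda}$ in $\mathcal{K}(C,\omega)$. Rescaling via $F_{\lambda}=\lambda F_{1}$ gives the clean form $g(\lambda)=\frac{\lambda^{n-1}}{(\sqrt{2\pi})^{n}}\int_{F_{1}}e^{-\lambda^{2}|y|^{2}/2}\,d\mathcal{H}^{n-1}(y)$.

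Since $F_{1}$ is compact in $\mathbb{R}^{n}\setminus\{o\}$, $|y|$ is bounded between two positive constants on $F_{1}$, so $g$ is continuous and strictly positive on $(0,\infty)$ with $g(\lambda)\to 0$ both as $\lambda\to 0^{+}$ (through the factor $\lambda^{n-1}$) and as $\lambda\to\infty$ (through the Gaussian factor). Hence $g$ attains its maximum and cannot be injective; any $\lambda_{1}\neq\lambda_{2}$ with $g(\lambda_{1})=g(\lambda_{2})$ then gives $K_{1}:=K_{\lambda_{1}}$, $L_{1}:=K_{\lambda_{2}}$, which are distinct since $\lambda\mapsto K_{\lambda}$ is strictly decreasing, proving (1). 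For (2), the supremum of $\langle\cdot,v\rangle$ on $K_{\lambda}$ is attained on $F_{\lambda}$, so $h_{K_{\lambda}}(v)=-\lambda$ and
\[
C_{\gamma^{n}}(K_{\lambda},\cdot)=-h_{K_{\lambda}}(v)\,g(\lambda)\,\delta_{v}=\lambda g(\lambda)\,\delta_{v};
\]
the same non-injectivity argument applied to $G(\lambda):=\lambda g(\lambda)$, which again satisfies $G(0^{+})=G(\infty)=0$, produces $K_{2},L_{2}$. The main point requiring care is the identification of the Gauss image of $K_{\lambda}$ in $\Omega_{C^{\circ}}$ as the single point $v$; this uses only that outer normals along $\partial C\cap K_{\lambda}$ lie in $\partial C^{\circ}$, together with the standard fact that the multi-valued portion of the Gauss map on $\partial K_{\lambda}$ is $\mathcal{H}^{n-1}$-negligible. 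Everything else—continuity of $g$, the two boundary asymptotics, and the pseudo-cone axioms—is routine.
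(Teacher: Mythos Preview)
Your proposal is correct and follows essentially the same construction as the paper's proofs of Theorems~\ref{ex1} and~\ref{uni}: truncate $C$ by a single half-space with normal $v\in\omega$, so that the Gaussian surface (resp.\ cone) measure is a Dirac at $v$ with scalar weight $g(\lambda)$ (resp.\ $\lambda g(\lambda)$) tending to $0$ at both $0^{+}$ and $\infty$, and invoke non-injectivity. One small remark: the implication ``$S_{\gamma^{n}}(K_{\lambda},\cdot)$ is supported at $v$, hence $K_{\lambda}\in\mathcal{K}(C,\omega)$'' is not valid as stated; the clean argument (which the paper gives) is the chain $K_{\lambda}=C\cap H_{K_{\lambda}}^{-}(v)\supseteq C\cap\bigcap_{u\in\omega}H_{K_{\lambda}}^{-}(u)\supseteq K_{\lambda}$, using $v\in\omega$.
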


\section{Preliminaries}\label{section2}

Let  $\mathbb{R}^{n}$ denote $n$-dimensional Euclidean space, equipped with the standard inner product $\langle \cdot,\cdot\rangle$.  The open unit ball is denoted by $B$,
the unit sphere by $ S^{n-1}$,  and the origin by $o$.  For a general reference on the
theory of convex geometry,  we refer to the classical texts
\cite{Gardnerbook2006} and  \cite{schneiderbook2014}.

A set $C\subset \mathbb{R}^{n}$ is a \textit{cone} if for all $x\in C$  and $\lambda \ge 0$, we have $\lambda x\in C$.  Given an $n$-dimensional pointed closed convex cone     $C \subset \mathbb{R}^{n}$, its \emph{dual cone} is defined as
$$
C^{\circ}=\left\{x \in \mathbb{R}^{n}:\langle x, y\rangle \leq 0 \text { for all } y \in C\right\}.$$
A nonempty closed convex set $K$ is called a \textit{pseudo-cone} if for all $x\in K$ and any $\lambda \ge1$, we have $\lambda x\in K$, $\lambda \ge1$. The \emph{recession cone} of a pseudo-cone $K$ is given by
$$
\operatorname{rec} K=\left\{z \in \mathbb{R}^{n}: K+z \subseteq K\right\},$$
and a \textit{$C$-pseudo-cone} is a pseudo-cone satisfying $\operatorname{rec}K=C$. We denote  $\partial_{i} K:=\partial K \cap \operatorname{int} C$.
The hyperplane and  halfspace in  $\mathbb{R}^{n}$  are defined, independently, by
$$
H(u, t)=\left\{x \in \mathbb{R}^{n}:\langle x, u\rangle=t\right\}, \quad H^{-}(u, t)=\left\{x \in \mathbb{R}^{n}:\langle x, u\rangle \leq t\right\}$$
for  $u \in S^{n-1}$  and  $t \in \mathbb{R}$.  A unit vector $\mathfrak{v}\in \operatorname{int} C \cap \operatorname{int}\left(-C^{\circ}\right)$ is fixed. For  $t>0$  we define
$$
	C^{-}(t):=C \cap H^{-}(\mathfrak{v}, t),\quad
	C^{+}(t):=C \cap H^{+}(\mathfrak{v}, t),\quad
\text{and}\quad	C(t):=C \cap H(\mathfrak{v}, t).
$$
Meanwhile, we will write $K(t)=K\cap  C(t)$, $K^{-}(t)=K\cap C^{-}(t)$ and $K^{+}(t)=K\cap C^{+}(t)$.

For any   $C$-pseudo-cone $K$, its \textit{support function} $h_{K}: C^{\circ}\rightarrow \mathbb{R}$  is defined as
\begin{equation}
	h_{K}(x):=\sup \{\langle x, y\rangle: y \in K\}.
\end{equation}
This function   is bounded and non-positive.  To obtain a positive function,  we define  the absolute support function as
$$\bar{h}_{K}:=-h_{K}.$$

The \textit{radial function}  $\varrho_{K}: \Omega_{C}\rightarrow
\mathbb{R} $ is given by
\begin{equation}
	\varrho_{K}(v):=\min \{\lambda \in \mathbb{R}: \lambda v \in K\}.
\end{equation}
This induces the \textit{radial map }: $r_{K}=\varrho_{K}(v) v$
and the \textit{radial Gauss map}: $\alpha_{K}=\nu_{K} \circ r_{K}$, where $\nu_{K}$ denotes the outer unit normal vector field on $\partial K$.

	The convergence of $C$-pseudo-cones is introduced by Schneider \cite{schneider pescone}  as follows.
	\begin{definition}\label{conv def}
		We say a sequence of  $C$-pseudo-cones  $K_{i}$  converges to a  $C$-pseudo-cone  $K$  if there exists  $t_{0}>0$  such that  $K_{i}^{-}\left(t_{0}\right) \neq \emptyset$  for all  $i$  and  $K_{i}^{-}(t) \rightarrow K^{-}(t)$  for all $t \geq t_{0}$ with respect to the Hausdorff metric.
	\end{definition}
	
	The following selection theorem for $C$-pseudo cones, established in \cite{schneider pescone}, plays a crucial role in this paper.
	\begin{lemma}\label{select}
		Let  $K_{i}$  be a sequence of  $C$-pseudo-cones. If there exist two uniformly positive constants $a$ and $b$ such that $0<a<\operatorname{dist}\left(o, K_{i}\right)<b$ for all $i$, then there exists a subsequence  $K_{i_{j}}$ converges to  a  $C$-pseudo-cone  $K$, i.e., $K_{i_{j}} \rightarrow K$.
	\end{lemma}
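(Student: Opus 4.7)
The plan is to combine the classical Blaschke selection theorem with a diagonal argument applied to the truncations $K_i^-(t)$, then verify that the resulting limit is genuinely a $C$-pseudo-cone. First I would choose a threshold $t_0>0$ so large that every $K_i^-(t_0)$ is nonempty. This is possible because $\operatorname{dist}(o,K_i)<b$ forces each $K_i$ to meet the ball $bB$, and points of $K_i$ have $\mathfrak{v}$-coordinates bounded by $b|\mathfrak{v}|$. The reason the truncations are compact is that $\mathfrak{v}\in\operatorname{int}(-C^{\circ})$ gives $\langle\mathfrak{v},y\rangle>0$ for every nonzero $y\in C$, and since each $K_i$ is contained in $C$ (as can be shown using $2x,2y\in K_i$ and convexity to get $x+y\in K_i$, hence $K_i\subseteq\operatorname{rec}K_i=C$), the set $C\cap H^{-}(\mathfrak{v},t)$ is a bounded convex body by a homogeneity-plus-compactness argument on $S^{n-1}\cap C$.

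Next, for each $k\in\mathbb{N}$ set $t_k=t_0+k$. The compact convex sets $K_i^-(t_k)$ lie in the fixed compact convex set $C\cap H^{-}(\mathfrak{v},t_k)$, so by the classical Blaschke selection theorem a subsequence converges in the Hausdorff metric. A standard diagonal extraction then produces one subsequence, still denoted $K_{i_j}$, such that $K_{i_j}^-(t_k)\to L_k$ for every $k$. The compatibility $L_k=L_{k+1}\cap H^{-}(\mathfrak{v},t_k)$ is inherited from the analogous relation for the $K_{i_j}$, so I can define
$$K:=\overline{\bigcup_{k\ge 1}L_k},$$
which is a closed convex set. The lower bound $\operatorname{dist}(o,K_i)>a$ passes to the Hausdorff limit on each slab, so $\operatorname{dist}(o,K)\ge a>0$, and in particular $o\notin K$.

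It remains to check that $K$ is actually a $C$-pseudo-cone and that the convergence $K_{i_j}^-(t)\to K^-(t)$ holds for every $t\ge t_0$, not merely the $t_k$. For the pseudo-cone property, if $x\in L_k$ and $\lambda\ge 1$, then $x=\lim x_j$ with $x_j\in K_{i_j}^-(t_k)$, so $\lambda x_j\in K_{i_j}\cap H^-(\mathfrak{v},\lambda t_k)$, which shows $\lambda x\in L_{\lceil \lambda t_k-t_0\rceil+1}\subseteq K$. For the recession cone, one inclusion follows from $K\subseteq C$ (since each slab limit lies in $C$ as the $K_i$ do), giving $\operatorname{rec}K\subseteq C$; the other inclusion follows from $C+K_{i_j}\subseteq K_{i_j}$ in the limit, yielding $C+K\subseteq K$. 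For convergence at intermediate $t$, I would sandwich $t_k\le t\le t_{k+1}$ and use monotonicity of the truncations together with the elementary fact that Hausdorff convergence on $K_{i_j}^-(t_{k+1})$ implies Hausdorff convergence of $K_{i_j}^-(t)=K_{i_j}^-(t_{k+1})\cap H^{-}(\mathfrak{v},t)$ to $K^-(t)$, which is a standard slicing property for converging compact convex sequences.

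The main obstacle, in my view, is the final step: ruling out the pathology where the limit $K$ has a strictly smaller recession cone than $C$ (so that $K$ is either bounded or a $C'$-pseudo-cone for $C'\subsetneq C$). The inclusion $K\subseteq C$ is easy, but showing $\operatorname{rec}K\supseteq C$ requires approximating an arbitrary $x\in K$ by points $x_j\in K_{i_j}^-(t_k)$ and then using that $x_j+z\in K_{i_j}$ for every $z\in C$, with the subtlety that $x_j+z$ may escape the slab $H^-(\mathfrak{v},t_k)$. This is handled by passing to a larger slab $t_{k'}$ with $t_{k'}>t_k+\langle\mathfrak{v},z\rangle$, where the Hausdorff convergence still holds, so that $x+z\in L_{k'}\subseteq K$.
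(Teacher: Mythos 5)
The paper gives no proof of this lemma at all: it is imported from Schneider's paper on pseudo-cones \cite{schneider pescone}, so there is no in-paper argument to measure yours against. Your reconstruction --- truncate at heights $t_k$, apply Blaschke selection to the compact convex slices $K_i^-(t_k)$, diagonalize, assemble $K$ as the closure of the increasing union of slice limits $L_k$, and then verify $o\notin K$, the dilation property $\lambda K\subseteq K$, and $\operatorname{rec}K=C$ --- is the standard and, as far as I can tell, correct way to prove it, and you rightly single out the genuinely delicate step, namely ruling out $\operatorname{rec}K\subsetneq C$, which you settle by combining $K\subseteq C$ (giving $\operatorname{rec}K\subseteq C$) with $C+K\subseteq K$ via the enlarged-slab trick. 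The one place where you lean on an unproved ``standard slicing property'' deserves a sentence: Hausdorff convergence $A_j\to A$ of compact convex sets does not in general give $A_j\cap H^-\to A\cap H^-$; it does when $A$ meets the open halfspace $\operatorname{int}H^-$, and that hypothesis is available here only because the upper bound $\operatorname{dist}(o,K_i)<b$ supplies points $p_j\in K_{i_j}$ with $\langle p_j,\mathfrak{v}\rangle\le b$ whose limit lies in $K^-(t)$ strictly below the level $t$ for every $t>b$. You should therefore fix $t_0>b$ explicitly, since the same interior point is what rescues both your compatibility relation $L_k=L_{k+1}\cap H^{-}(\mathfrak{v},t_k)$ (when a point of $L_{k+1}$ sits exactly on the hyperplane $H(\mathfrak{v},t_k)$, push along the segment toward the interior point) and the convergence at intermediate levels $t_k\le t\le t_{k+1}$. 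With that caveat made explicit, the argument is complete.
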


We denote  $\Omega:=\Omega_{C^{\circ}}=S^{n-1}\cap \operatorname{int} C^{\circ}$ in this paper.
 For $u\in\Omega$, the support hyperplane of a $C$-pseudo-cone $K$ is given by
 $$
 H_{K}(u)=\left\{x \in \mathbb{R}^{n}:\langle x, u\rangle=h_{K}(u)\right\}, \quad H^{-}_{K}(u)=\left\{x \in \mathbb{R}^{n}:\langle x, u\rangle \leq h_{K}(u)\right\}.$$

We now introduce the concept  of $C$-determined pseudo-cones and their associated properties. A $C$-pseudo-cone $K$  is  \textit{$C$-determined} by a nonempty compact set   $\omega \subset \Omega$  if
$$K=C \cap \bigcap_{u \in \omega} H_{K}^{-}(u).$$
By  $\mathcal{K}(C, \omega)$  we denote the set of all $C$-pseudo-cones that are  $C$-determined by  $\omega$. For any $K$ $\in \mathcal{K}(C, \omega)$,  the absolute support function  $\bar{h}_{K}=-h_{K}$ is positive and uniformly bounded away from zero on  $\omega$. Moreover, if  $K$  is  $C$-determined by  $\omega$, then the complement $C \backslash K$  is bounded.

 The following lemma established in \cite{schneider weighted} will be needed.
\begin{lemma}\label{compace conv}
	If  $K_{j}$ are $C$-pseudo-cones, $j \in \mathbb{N}$, satisfy  $K_{j} \rightarrow K$.   Let $ \omega \subset \Omega$  be a nonempty compact set, then
	$$K_{j}^{(\omega)} \rightarrow K^{(\omega)},$$
	where $K^{(\omega)}:=C \cap \bigcap_{u \in \omega} H_{K}^{-}(u)$ denotes the restriction of $K$ on $\omega$.
\end{lemma}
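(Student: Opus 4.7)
The plan is to reduce the lemma to two steps: (a) show that $K_j\to K$ forces $h_{K_j}\to h_K$ uniformly on $\omega$; (b) use this uniform convergence together with the selection theorem (Lemma \ref{select}) to extract a subsequential limit of $K_j^{(\omega)}$ and identify it with $K^{(\omega)}$.

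For step (a), compactness of $\omega\subset\operatorname{int} C^{\circ}$ gives a constant $c>0$ with $\langle x,u\rangle\le -c|x|$ for every $x\in C$ and every $u\in\omega$. Since $K_j\to K$, the Hausdorff-convergent sets $K_j^{-}(t_0)$ all lie in a common ball $RB$, hence $\bar h_{K_j}(u)\le R$ on $\omega$, and the supremum defining $h_{K_j}(u)$ is attained at a point of norm at most $R/c$. Choosing $t^{\ast}\ge t_0$ with $\{x\in C:|x|\le R/c\}\subseteq C^{-}(t^{\ast})$, one obtains $h_{K_j}(u)=h_{K_j^{-}(t^{\ast})}(u)$ for every $j$ and every $u\in\omega$, and likewise $h_K=h_{K^{-}(t^{\ast})}$ on $\omega$. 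Hausdorff convergence $K_j^{-}(t^{\ast})\to K^{-}(t^{\ast})$ then yields uniform convergence of the corresponding support functions on $S^{n-1}$, and in particular on $\omega$.

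For step (b), since $\operatorname{dist}(o,K_j^{(\omega)})\ge\max_{u\in\omega}\bar h_{K_j}(u)$ (the defining halfspace at $u$ has the origin at distance $\bar h_{K_j}(u)$) and $\operatorname{dist}(o,K_j^{(\omega)})\le\operatorname{dist}(o,K_j)\le\operatorname{dist}(o,K_j^{-}(t_0))$, step (a) together with $K_j\to K$ provides uniform two-sided bounds on $\operatorname{dist}(o,K_j^{(\omega)})$. Lemma \ref{select} then supplies a subsequence $K_{j_k}^{(\omega)}\to M$ for some $C$-pseudo-cone $M$. Because $h_{K_j^{(\omega)}}=h_{K_j}$ on $\omega$ by the Wulff-shape definition, step (a) gives $h_M=h_K$ on $\omega$, whence $M\subseteq K^{(\omega)}$.

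The crux is the reverse inclusion $K^{(\omega)}\subseteq M$: uniform convergence of support functions does not by itself produce approximations lying inside the $K_{j_k}^{(\omega)}$. To overcome this, I would perturb in the direction of the reference vector $\mathfrak{v}$. Setting $\beta:=-\max_{u\in\omega}\langle\mathfrak{v},u\rangle>0$ (positive by compactness of $\omega$ together with $\mathfrak{v}\in\operatorname{int} C$ and $\omega\subset\operatorname{int} C^{\circ}$), for any $x\in K^{(\omega)}$ and small $s>0$ the point $y_s:=x+s\mathfrak{v}\in C$ satisfies $\langle y_s,u\rangle\le h_K(u)-s\beta$ on $\omega$; step (a) then ensures $\langle y_s,u\rangle\le h_{K_{j_k}}(u)$ for $k$ large, so $y_s\in K_{j_k}^{(\omega)}$ and hence $y_s\in M$. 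Letting $s\to 0^{+}$ gives $x\in M$, so $M=K^{(\omega)}$. Applying the same argument to every subsequence of $K_j^{(\omega)}$ (each admitting a further Lemma \ref{select}-convergent subsequence) upgrades the subsequential convergence to $K_j^{(\omega)}\to K^{(\omega)}$ in the sense of Definition \ref{conv def}.
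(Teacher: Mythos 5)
Your argument is correct, and there is nothing in the paper to compare it against: the authors do not prove Lemma \ref{compace conv} at all, but import it from Schneider's paper \emph{Weighted cone-volume measures of pseudo-cones} (reference \cite{schneider weighted}). Your proof is a legitimate self-contained derivation. The two key points both check out: (a) compactness of $\omega\subset\operatorname{int}C^{\circ}$ gives $\langle x,u\rangle\le -c|x|$ on $C\times\omega$, which localizes the supremum defining $h_{K_j}(u)$ to a fixed compact section $C^{-}(t^{\ast})$ and turns the sectionwise Hausdorff convergence into uniform convergence of $h_{K_j}$ on $\omega$; and (b) the inner perturbation $x\mapsto x+s\mathfrak{v}$, using $\max_{u\in\omega}\langle\mathfrak{v},u\rangle<0$, correctly supplies the reverse inclusion $K^{(\omega)}\subseteq M$ that uniform convergence of support functions alone does not give. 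The two-sided bounds $\max_{u\in\omega}\bar h_{K_j}(u)\le\operatorname{dist}(o,K_j^{(\omega)})\le\operatorname{dist}(o,K_j^{-}(t_0))$ are also right, so Lemma \ref{select} applies. The only place a referee might ask for a line more is the final passage from subsequential convergence to convergence of the full sequence: since Definition \ref{conv def} involves an existential threshold $t_0$, you should note that the uniform bound $\operatorname{dist}(o,K_j^{(\omega)})<b$ lets you fix one threshold (e.g.\ $t_0=b$) valid for all $j$ and all extracted subsequences, after which the subsequence principle can be applied level by level in the Hausdorff metric. This is routine and does not affect the validity of the proof.
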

A key tool in our study is the \textit{Wulff shapes} in cones, which is constructed as follows. Given a nonempty compact set
$ \omega \subset \Omega$  and  a positive continuous function $h: \omega \rightarrow \mathbb{R}$, the Wulff shape $[h]$ is defined by
\begin{equation}\label{wulff shape}
	[h]:=C \cap \bigcap_{u \in \omega}\left\{y \in \mathbb{R}^{n}:\langle y, u\rangle \leq-h(u)\right\}.
\end{equation}
The set $[h]$ will be called the \textit{Wulff shape} associated with $(C,\omega,h)$ and  $[h]\in\mathcal{K}(C,\omega)$.

	Let $ \gamma^{n} $ represent the standard Gaussian probability measure
on Euclidean space $(\mathbb{R}^{n},|\cdot|)$
$$\gamma^{n}(E):=\frac{1}{(2 \pi)^{\frac{n}{2}}}\int_{E} e^{-\frac{|x|^{2}}{2}} d x,$$
with a measurable set $ E $.  Unlike the Lebesgue measure, the Gaussian probability measure lacks both translation invariance and homogeneity.

In the context of Gaussian measures, the isoperimetric inequality states that among all subsets of $\mathbb{R}^{n}$ with a prescribed Gaussian measure, half-spaces minimize the Gaussian perimeter  (see  \cite{Latalaicm}). For every $o$-symmetric convex bodies $ K , L $
in $ \mathbb{R}^{n} $ and any $ \lambda \in (0,1) $, the Brunn-Minkowski type inequality for the Gaussian measure was presented in \cite{Z2021} as follows
\begin{equation*}
	\gamma^{n}(\lambda K+(1-\lambda)L)^{\frac{1}{n}} \ge \lambda \gamma^{n}( K)^{\frac{1}{n}}+(1-\lambda)\gamma^{n}( L)^{\frac{1}{n}}
\end{equation*}
with equality if and only if $ K=L $. For non-symmetric convex sets,
a counterexample was provided in \cite{Nayercounterexample2013}.
Further references on inequalities
in Gaussian space can be found in
\cite{cianchideficit,Gardner2010,milman2005,KLweightedprojection,
Lehec2009,Lutwak1993,Lutwak1996}.
\\

\section{Gaussian surface area measure }
Recall that  $\Omega:=\Omega_{C^{\circ}}=S^{n-1}\cap \operatorname{int} C^{\circ}$.  Let  $K$  be a  $C$-pseudo-cone in $\mathbb{R}^{n}$, for every Borel set $\eta\subset\Omega$, the Gaussian surface area measure of $K$ is then defined by
\begin{equation}
	S_{\gamma^{n}}(K,\eta)=\frac{1}{(\sqrt{2 \pi})^{n}} \int_{\nu_{K}^{-1}(\eta)} e^{-\frac{|x|^{2}}{2}} d \mathcal{H}^{n-1}(x).
\end{equation}

The Gaussian surface area measure of $C$-pseudo-cones is finite.
\begin{lemma}\label{finite surface}
	Let  $K$  be a  $C$-pseudo-cone, then the measure $S_{\gamma^{n}}(K,\cdot)$ is finite.
\end{lemma}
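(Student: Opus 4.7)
The plan is to bound the total measure
\[
S_{\gamma^{n}}(K,\Omega)=\frac{1}{(2\pi)^{n/2}}\int_{\nu_{K}^{-1}(\Omega)} e^{-|x|^{2}/2}\, d\mathcal{H}^{n-1}(x)
\]
by the more conservative quantity $(2\pi)^{-n/2}\int_{\partial K} e^{-|x|^{2}/2}\,d\mathcal{H}^{n-1}$. The underlying idea is that although $\partial K$ is unbounded, the Gaussian weight decays super-exponentially, whereas the $(n{-}1)$-dimensional surface area of $\partial K$ restricted to a ball of radius $r$ grows only polynomially. So the proof will boil down to a ``polynomial vs. Gaussian'' estimate on spherical shells.

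First, I would establish the polynomial surface bound: for every $r>0$, $\mathcal{H}^{n-1}(\partial K\cap\mathrm{int}(rB))\le n\omega_{n}r^{n-1}$. This follows from the classical monotonicity of surface area for convex bodies: $K\cap rB$ is a convex body contained in $rB$, and the inclusion $\partial K\cap\mathrm{int}(rB)\subset\partial(K\cap rB)$ combined with monotonicity gives $\mathcal{H}^{n-1}(\partial K\cap\mathrm{int}(rB))\le \mathcal{H}^{n-1}(\partial(K\cap rB))\le \mathcal{H}^{n-1}(\partial(rB))=n\omega_{n}r^{n-1}$. Next, since $K$ is a pseudo-cone we have $o\notin K$, and closedness of $K$ gives $d:=\mathrm{dist}(o,K)>0$, so $\partial K\subset\mathbb{R}^{n}\setminus dB$.

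Decomposing $\partial K$ into unit spherical shells $A_{k}:=\partial K\cap\{d+k\le |x|<d+k+1\}$ for $k\in\mathbb{N}_{0}$, the surface bound above yields $\mathcal{H}^{n-1}(A_{k})\le n\omega_{n}(d+k+1)^{n-1}$, while the Gaussian density is bounded by $e^{-(d+k)^{2}/2}$ on $A_{k}$. Summing,
\[
\int_{\partial K} e^{-|x|^{2}/2}\, d\mathcal{H}^{n-1}(x)\le n\omega_{n}\sum_{k=0}^{\infty}(d+k+1)^{n-1}e^{-(d+k)^{2}/2}<\infty,
\]
since Gaussian decay dominates polynomial growth (ratio test). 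Dividing by $(2\pi)^{n/2}$ gives $S_{\gamma^{n}}(K,\Omega)<\infty$. The only mild technical point is the monotonicity-of-surface-area step, which is standard in convex geometry, so I do not anticipate any real obstacle.
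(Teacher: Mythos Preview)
Your argument is correct and takes a genuinely different route from the paper's. The paper proves finiteness by comparison with Schneider's homogeneous weight: it picks $\Theta(x)=|x|^{-q}$ for some $q>n-1$, notes that $e^{-|x|^{2}/2}\le\Theta(x)$ far from the origin, and then invokes the finiteness of $\int_{\partial K\cap C^{+}(t_{0})}\Theta\,d\mathcal{H}^{n-1}$ established in \cite{schneider weighted}. The bounded part $\partial K\cap C^{-}(t_{0})$ is handled trivially. Your proof, by contrast, is self-contained: the shell decomposition together with the monotonicity bound $\mathcal{H}^{n-1}(\partial K\cap rB)\le n\omega_{n}r^{n-1}$ reduces everything to a convergent series, with no appeal to the weighted theory. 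The paper's approach is shorter because it outsources the analytic work to an existing reference; yours is more elementary and would apply verbatim to any weight that decays faster than any negative power of $|x|$. One cosmetic point: in this paper $B$ denotes the \emph{open} unit ball, so for the surface-area monotonicity step you should intersect $K$ with the closed ball $r\bar{B}$ to get a compact convex set; this changes nothing substantive.
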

\begin{proof}
	Let $\Theta(x)=|x|^{-q}$ with some $q>n-1$, then there exists $t_{0}>0$ such that for any $x\in C^{+}(t_{0})$, we have $ e^{-\frac{|x|^{2}}{2}}\le \Theta(x)$. From \cite{schneider weighted} we know
	$$\int_{\partial K\cap C^{+}(t_{0})}\Theta(x)d\mathcal{H}^{n-1}(x)<\infty,$$
	so we get
	$$\int_{\partial K\cap C^{+}(t_{0})}e^{-\frac{|x|^{2}}{2}}d\mathcal{H}^{n-1}(x)<\infty.$$
	Since
	$$\int_{\partial K\cap C^{-}(t_{0})}e^{-\frac{|x|^{2}}{2}}d\mathcal{H}^{n-1}(x)<\infty,$$
	then the desired result follows.
		
\end{proof}
 \begin{comment}
 \begin{lemma}
 	Let  $K$  be a  $C$-pseudo-cone, the measure $C_{\gamma^{n}}(K,\cdot)$ is finite.
 \end{lemma}
 \begin{proof}
 	This follows from Lemma \ref{finite surface} and the boundedness of the support functions of C-pseudo cones.
 \end{proof}
\end{comment}
We now require the transform integral formula from $\partial K$ to $\Omega_{C}$, the following  result can be found in \cite[Lemma 4]{schneider weighted cone}.

\begin{lemma}\label{integral trans}
		Let  $K$  be a  $C$-pseudo-cone. Let $F: \partial_{i}K\to \mathbb{R}$ be nonnegative and Borel measurable or $\mathcal{H}^{n-1}$-integrable. Then
		\begin{align}
			\int_{\partial_{i} K} F(y) d\mathcal{H}^{n-1}(y) & =\int_{\Omega_{C}} F\left(r_{K}(v)\right) \frac{\varrho_{K}^{n}(v)}{\bar{h}_{K}\left(\alpha_{K}(v)\right)} d v \\
			& =\int_{\Omega_{C}} F\left(r_{K}(v)\right) \frac{\varrho_{K}^{n-1}(v)}{\left|\left\langle v, \alpha_{K}(v)\right\rangle\right|} d v.
		\end{align}
\end{lemma}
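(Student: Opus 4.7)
The plan is to prove this via the radial parametrization of $\partial_i K$ and a change-of-variables argument, then convert between the two forms using the support-radial identity. First I would note that, since $K$ is a $C$-pseudo-cone with $\operatorname{rec} K = C$, the radial map $r_K : \Omega_C \to \partial_i K$, $v \mapsto \varrho_K(v)\, v$, is a bijection. Because $K$ is convex, $\partial K$ is locally Lipschitz and the outer unit normal $\nu_K$ exists $\mathcal{H}^{n-1}$-almost everywhere; by a standard argument the complement of the set of regular boundary points has zero $\mathcal{H}^{n-1}$-measure and its preimage under $r_K$ has zero spherical measure on $\Omega_C$, so we may work on the full-measure subset where $\alpha_K = \nu_K \circ r_K$ is well-defined.

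Next I would compute the surface-area distortion of the map $r_K$. Locally in a chart on $\Omega_C$, the differential of $r_K$ at $v$ is $d\varrho_K(v) \otimes v + \varrho_K(v) \,\mathrm{id}_{T_v S^{n-1}}$. A direct computation of the induced Gram determinant, together with the geometric fact that the tangent space to $\partial K$ at $r_K(v)$ is orthogonal to $\alpha_K(v)$, yields
\begin{equation*}
d\mathcal{H}^{n-1}(y)\big|_{y=r_K(v)} = \frac{\varrho_K^{n-1}(v)}{\lvert \langle v, \alpha_K(v)\rangle\rvert}\, dv,
\end{equation*}
where $dv$ denotes the spherical Lebesgue measure on $\Omega_C$. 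This is the familiar relation between the spherical and boundary measures, and it gives the second displayed equality once inserted into $\int_{\partial_i K} F\, d\mathcal{H}^{n-1}$ for simple nonnegative $F$ and extended by monotone convergence.

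For the first equality I would exploit the support-radial identity. Since $r_K(v) = \varrho_K(v)\, v$ lies on the supporting hyperplane of $K$ with outer normal $\alpha_K(v)$, we have $\langle r_K(v), \alpha_K(v)\rangle = h_K(\alpha_K(v)) = -\bar{h}_K(\alpha_K(v))$. Dividing by $\varrho_K(v) > 0$ gives
\begin{equation*}
\lvert \langle v, \alpha_K(v)\rangle\rvert = \frac{\bar{h}_K(\alpha_K(v))}{\varrho_K(v)},
\end{equation*}
so $\varrho_K^{n-1}(v)/\lvert \langle v, \alpha_K(v)\rangle\rvert = \varrho_K^n(v)/\bar{h}_K(\alpha_K(v))$, matching the first form of the integrand. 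The approximation by simple functions and monotone/dominated convergence then extends from nonnegative Borel measurable $F$ to $\mathcal{H}^{n-1}$-integrable $F$.

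The main obstacle I anticipate is the rigorous justification of the local Jacobian computation at points where $\partial K$ fails to be $C^1$, which is only delicate in principle: on the $\mathcal{H}^{n-1}$-null set of non-regular boundary points both sides vanish by convention, and on the regular part the convex hypersurface admits a Lipschitz graph representation so the area formula applies. Everything else is a routine chain of identities once the parametrization and the support identity are in hand, which is why the result is essentially stated as a citation.
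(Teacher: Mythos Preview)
Your proposal is correct and follows the standard route: the paper itself does not prove this lemma but cites it from \cite[Lemma~4]{schneider weighted cone}, where the argument is precisely the radial parametrization plus area-formula computation you outline, together with the support--radial identity $\varrho_K(v)\lvert\langle v,\alpha_K(v)\rangle\rvert=\bar h_K(\alpha_K(v))$ to pass between the two integrands. Your handling of the $\mathcal{H}^{n-1}$-null set of non-regular boundary points and the extension from simple to measurable/integrable $F$ is also the standard one.
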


The following transform integral formula for the Gaussian surface area measure will be needed.
\begin{lemma}\label{surface trans}
	 Let  $\omega \subset \Omega$  be a nonempty Borel set and  $g: \omega \rightarrow \mathbb{R}$  be bounded and measurable. Then
	\begin{equation}
	\int_{\omega} g(u) dS_{\gamma^{n}}(K, u)=\frac{1}{(\sqrt{2 \pi})^{n}}\int_{\alpha_{K}^{-1}(\omega)} g\left(\alpha_{K}(v)\right)  e^{-\frac{\varrho_{K}^{2}(v)}{2}}\frac{\varrho_{K}^{n-1}(v)}{\left|\left\langle v, \alpha_{K}(v)\right\rangle\right|} d v.
	\end{equation}
\end{lemma}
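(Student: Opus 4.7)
My plan is to reduce the identity to the case of indicator functions and then invoke the transform formula from $\partial_i K$ to $\Omega_C$ (Lemma~\ref{integral trans}) with the Gaussian density built into the integrand.

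First I would observe that both sides of the identity are linear in $g$ and, for nonnegative $g$, behave well under monotone convergence: the left-hand side is the integral of a bounded Borel function against the finite measure $S_{\gamma^n}(K,\cdot)$ (finiteness by Lemma~\ref{finite surface}), and the right-hand side is a Lebesgue integral on $\Omega_C$ with nonnegative integrand bounded above by $\|g\|_\infty e^{-\varrho_K^2(v)/2}\varrho_K^{n-1}(v)/|\langle v,\alpha_K(v)\rangle|$, which is integrable by the same transform applied to $F\equiv e^{-|y|^2/2}$ on $\partial_i K$. By the standard approximation of bounded measurable $g$ by simple functions, together with dominated convergence, it suffices to verify the identity for $g=\mathbf{1}_\eta$ with $\eta\subset\omega$ Borel.

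For $g=\mathbf{1}_\eta$, the left-hand side equals $S_{\gamma^n}(K,\eta)$, which by definition is $(2\pi)^{-n/2}\int_{\nu_K^{-1}(\eta)}e^{-|x|^2/2}\,d\mathcal{H}^{n-1}(x)$. Here I would use the key geometric observation that $\eta\subset\Omega=S^{n-1}\cap\operatorname{int} C^\circ$ forces $\nu_K^{-1}(\eta)\subset\partial_i K$: if $\nu_K(x)\in\operatorname{int} C^\circ$ then the corresponding support hyperplane $H_K(\nu_K(x))$ cannot touch $\partial C$, so $x\in\operatorname{int} C$. Consequently the surface integral over $\nu_K^{-1}(\eta)$ coincides with the integral over this set viewed as a subset of $\partial_i K$, and Lemma~\ref{integral trans} applies with $F(y)=\mathbf{1}_{\nu_K^{-1}(\eta)}(y)\,e^{-|y|^2/2}$.

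Applying that lemma and using the identities $r_K(v)=\varrho_K(v)v$, so $|r_K(v)|=\varrho_K(v)$, together with $\alpha_K=\nu_K\circ r_K$, so $\mathbf{1}_{\nu_K^{-1}(\eta)}(r_K(v))=\mathbf{1}_{\alpha_K^{-1}(\eta)}(v)$, the right-hand side of Lemma~\ref{integral trans} becomes
$$
\int_{\alpha_K^{-1}(\eta)}e^{-\varrho_K^2(v)/2}\,\frac{\varrho_K^{n-1}(v)}{|\langle v,\alpha_K(v)\rangle|}\,dv,
$$
which, multiplied by $(2\pi)^{-n/2}$, matches the desired right-hand side with $g=\mathbf{1}_\eta$. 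Extending back by linearity and the approximation argument finishes the proof.

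There is no real obstacle; the only subtle point is the inclusion $\nu_K^{-1}(\eta)\subset\partial_i K$, which is what lets the cone-side transform formula from Lemma~\ref{integral trans} be invoked without a boundary correction. Everything else is a change of variables packaged by that lemma plus a measure-theoretic extension.
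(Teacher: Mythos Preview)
Your proposal is correct and uses the same key ingredient as the paper, namely Lemma~\ref{integral trans} applied with the Gaussian density folded into the integrand. The only difference is stylistic: the paper skips your reduction to indicators and directly takes $F(y)=\mathbf{1}_{\omega}(\nu_K(y))\,g(\nu_K(y))\,e^{-|y|^2/2}$ in Lemma~\ref{integral trans}, obtaining the identity in one step (the hypothesis that $F$ be nonnegative or $\mathcal{H}^{n-1}$-integrable is satisfied because $g$ is bounded and the Gaussian surface area measure is finite). Your extra approximation layer is harmless but unnecessary; on the other hand, you make explicit the point $\nu_K^{-1}(\eta)\subset\partial_i K$, which the paper leaves implicit.
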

	
	\begin{proof}
		Let  $F(y):=1_{\omega}\left(\nu_{K}(y)\right) g\left(\nu_{K}(y)\right) e^{-\frac{|y|^{2}}{2}}$, from Lemma \ref{integral trans}  we get
		$$
		\int_{\partial_{i} K} 1_{\omega}\left(\nu_{K}(y)\right) g\left(\nu_{K}(y)\right)e^{-\frac{|y|^{2}}{2}} d\mathcal{H}^{n-1}(y)=\int_{\alpha_{K}^{-1}(\omega)} g\left(\alpha_{K}(v)\right) e^{-\frac{\varrho_{K}^{2}(v)}{2}} \frac{\varrho_{K}^{n-1}(v)}{\mid\left\langle v, \alpha_{K}(v) \rangle \mid\right.} d v.$$
		By the definition of  Gaussian surface area measure   we have
		$$\int_{\omega} g(u) dS_{\gamma^{n}}(K,u) =\frac{1}{(\sqrt{2 \pi})^{n}}\int_{\partial_{i} K} 1_{\omega}\left(\nu_{K}(y)\right) g\left(\nu_{K}(y)\right) e^{-\frac{|y|^{2}}{2}} d\mathcal{H}^{n-1}(y),$$
		then the desired result follows.
		
	\end{proof}

The weak continuity of the Gaussian surface area measure in the sense of Definition \ref{conv def} is established as follows, which will be used for the Minkowski type problem on the set $\mathcal{K}(C, \omega)$ for some compact $\omega \subset \Omega$.

\begin{lemma}\label{weak conve}
	Let  $\omega \subset \Omega$  be a nonempty compact subset, and let $ K_{j} \in \mathcal{K}(C, \omega)$  for  $j \in   \mathbb{N}$. Then $K_{j} \rightarrow K $ as  $j \rightarrow \infty $ implies the weak convergence  $S_{\gamma^{n}}\left(K_{j}, \cdot\right) \xrightarrow{w} S_{\gamma^{n}}\left(K, \cdot\right)$.
\end{lemma}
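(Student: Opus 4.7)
The plan is to verify weak convergence by testing against an arbitrary bounded continuous function $g$ on $\Omega$, converting the integrals to $\Omega_C$ via Lemma \ref{surface trans}, and then applying Lebesgue's dominated convergence. A preliminary observation is that, since each $K_j=K_j^{(\omega)}$ lies in $\mathcal{K}(C,\omega)$, Lemma \ref{compace conv} gives $K=K^{(\omega)}\in\mathcal{K}(C,\omega)$, and every measure $S_{\gamma^n}(K_j,\cdot)$ and $S_{\gamma^n}(K,\cdot)$ is concentrated on $\overline{\omega}$. It therefore suffices to prove $\int g\,dS_{\gamma^n}(K_j,\cdot)\to\int g\,dS_{\gamma^n}(K,\cdot)$ for continuous $g$ on $\overline{\omega}$.

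By Lemma \ref{surface trans},
\begin{equation*}
\int g\,dS_{\gamma^n}(K_j,\cdot)=\frac{1}{(\sqrt{2\pi})^n}\int_{\alpha_{K_j}^{-1}(\overline{\omega})} g(\alpha_{K_j}(v))\,e^{-\varrho_{K_j}^2(v)/2}\,\frac{\varrho_{K_j}^{n-1}(v)}{|\langle v,\alpha_{K_j}(v)\rangle|}\,dv,
\end{equation*}
and the identity $\varrho_{K_j}(v)|\langle v,\alpha_{K_j}(v)\rangle|=\bar h_{K_j}(\alpha_{K_j}(v))$ recasts the integrand as $g(\alpha_{K_j}(v))\,e^{-\varrho_{K_j}^2(v)/2}\,\varrho_{K_j}^{n}(v)/\bar h_{K_j}(\alpha_{K_j}(v))$. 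For pointwise a.e.\ convergence I would invoke $\varrho_{K_j}(v)\to\varrho_K(v)$ at almost every $v\in\Omega_C$ (the continuity points of the limit radial function) together with $\alpha_{K_j}(v)\to\alpha_K(v)$ at those $v$ whose radial images are regular points of $\partial K$, again a full-measure set. Combined with continuity of $g$ and of $t\mapsto e^{-t^2/2}$ and with $\bar h_{K_j}\to\bar h_K$ uniformly on $\overline{\omega}$, this gives a.e.\ convergence of the integrands.

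For a uniform dominator, note that $t^n e^{-t^2/2}\le M_n$ on $[0,\infty)$; that $K_j\to K$ together with $K_j\in\mathcal{K}(C,\omega)$ forces $\bar h_{K_j}\ge c$ on $\overline{\omega}$ for some $c>0$ independent of $j$ (using the uniform lower bound on $\operatorname{dist}(o,K_j)$ coming from Lemma \ref{select}); and $g$ is bounded. The integrand is then dominated by $M_n\|g\|_\infty/c$ on the finite-measure set $\Omega_C\subset S^{n-1}$, and dominated convergence delivers the desired limit.

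The main obstacle is the pair of pointwise a.e.\ convergences $\varrho_{K_j}\to\varrho_K$ on $\Omega_C$ and $\alpha_{K_j}\to\alpha_K$ off an $\mathcal{H}^{n-1}$-null set. Both must be extracted from the truncation-wise Hausdorff convergence of Definition \ref{conv def} by first upgrading it to uniform convergence of the support functions on compact subsets of $\Omega$ and then to Blaschke-type convergence of the boundary normal maps at regular points, transported back to $\Omega_C$ via the radial map $v\mapsto\varrho_K(v)v$. Once these two facts are in hand, the domination step and the reduction to test functions on $\overline{\omega}$ close the argument.
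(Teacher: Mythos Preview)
Your approach is essentially the same as the paper's: convert the surface integral to an integral over $\Omega_C$ via Lemma~\ref{surface trans}, verify pointwise a.e.\ convergence of the integrand, and close with dominated convergence. The paper is terser---it simply asserts the a.e.\ convergence and bounds the integrand by noting $|\langle v,\alpha_{K_j}(v)\rangle|$ is bounded away from $0$ (since $\alpha_{K_j}(v)\in\omega$ and $\omega\subset\Omega$ is compact) and $\varrho_{K_j}$ is uniformly bounded---while you recast via $\bar h_{K_j}$ and use the universal bound $t^n e^{-t^2/2}\le M_n$; these are equivalent. One small correction: your lower bound $\bar h_{K_j}\ge c$ on $\omega$ does not come from Lemma~\ref{select} (which is a selection theorem, not a bound); it follows instead from the uniform convergence $\bar h_{K_j}\to\bar h_K$ on the compact set $\omega$ together with $\min_\omega\bar h_K>0$, or equivalently from the paper's observation that $|\langle v,u\rangle|$ is bounded below on $\overline{\Omega_C}\times\omega$.
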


\begin{proof}
	Let  $g: \Omega \rightarrow \mathbb{R}$  be bounded and continuous. By Lemma \ref{surface trans}  we obtain
$$	\int_{\Omega} g(u) dS_{\gamma^{n}}\left(K_{j}, u\right)=\frac{1}{(\sqrt{2 \pi})^{n}}\int_{\Omega_{C}} g\left(\alpha_{K_{j}}(v)\right) e^{-\frac{\varrho_{K_{j}}^{2}(v)}{2}} \frac{\varrho_{K_{j}}^{n-1}(v)}{\left|\left\langle v, \alpha_{K_{j}}(v)\right\rangle\right|} d v.$$

	For almost all  $v \in \Omega_{C}$  we have
	$$g\left(\alpha_{K_{j}}(v)\right) e^{-\frac{\varrho_{K_{j}}^{2}(v)}{2}} \frac{\varrho_{K_{j}}^{n-1}(v)}{\left|\left\langle v, \alpha_{K_{j}}(v)\right\rangle\right|} \rightarrow g\left(\alpha_{K}(v)\right) e^{-\frac{\varrho_{K}^{2}(v)}{2}} \frac{\varrho_{K}^{n-1}(v)}{\left|\left\langle v, \alpha_{K}(v)\right\rangle\right|},$$
as $j\to \infty$.
	Since $K_{j} \rightarrow K$ on the set $\mathcal{K}(C, \omega)$,  then $\left|\left\langle v, \alpha_{K_{j}}(v)\right\rangle\right|$ is bounded away from $0$ because $\alpha_{K_{j}}(v)\subset \omega$ and $\omega$ is compact,
	and $\varrho_{K_{j}} $ is uniformly bounded. Thus,  the functions on the left-hand side are uniformly bounded, and this result follows from the dominated convergence theorem.
\end{proof}
 \begin{comment}
We use this to drive the following lemmas.
\begin{lemma}\label{weal cone conve}
	Let  $\omega \subset \Omega$  be a nonempty compact subset, and let $ K_{j} \in \mathcal{K}(C, \omega)$  for  $j \in   \mathbb{N}_{0}$ . Then $K_{j} \rightarrow K $ as  $j \rightarrow \infty $ implies the weak convergence  $C_{\gamma^{n}}\left(K_{j}, \cdot\right) \xrightarrow{w} C_{\gamma^{n}}\left(K, \cdot\right)$.
\end{lemma}

\begin{proof}
	This follows from Lemma \ref{weak conve} and the fact $\bar{h}_{K_{j}}\to\bar{h}_{K}$ uniformly.
\end{proof}
\end{comment}
To study the Minkowski type existence problem by variational methods, we need the variational formula for the Gaussian measure of pseudo-cones. The following  result can be found in \cite{schneider weighted}. The similar result for convex bodies is presented in \cite{HLYZdual}.
\begin{lemma}\label{rho deri}
	 Let $ \omega \subset \Omega$  be nonempty and compact, let  $K \in \mathcal{K}(C, \omega)$. Let  $f: \omega \rightarrow \mathbb{R}$  be continuous. There is a constant  $\delta>0$  such that the function  $h_{t}$  defined by
	$$h_{t}(u):=\bar{h}_{K}(u)+t f(u), \quad u \in \omega$$
	is positive for  $|t| \leq \delta$. Let  $\left[h_{t}\right]$  be the Wulff shape associated with  $\left(C, \omega, h_{t}\right)$, for  $|t| \leq \delta$.
	
	(a) For almost all  $v \in \Omega_{C}$,
	
	$$\left.\frac{\mathrm{d} \varrho_{\left[h_{t}\right]}(v)}{\mathrm{d} t}\right|_{t=0}=\lim _{t \rightarrow 0} \frac{\varrho_{\left[h_{t}\right]}(v)-\varrho_{K}(v)}{t}=\frac{f\left(\alpha_{K}(v)\right)}{\bar{h}_{K}\left(\alpha_{K}(v)\right)} \varrho_{K}(v).$$
	
	(b) There is a constant  $M$  with
	
	 $$\left|\varrho_{\left[h_{t}\right]}(v)-\varrho_{K}(v)\right| \leq M|t|$$
	
	for all  $v \in \Omega_{C}$  and all  $|t| \leq \delta$.
\end{lemma}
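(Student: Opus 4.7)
The plan is to reduce the radial function of each Wulff shape $[h_t]$ to an explicit supremum formula, establish a uniform positive lower bound on $|\langle v,u\rangle|$ over $u\in\omega$ and $v\in\overline{\Omega_C}$, and then carry out a standard envelope argument at the maximizer. Because $v\in\operatorname{int} C$, the ray $\{\lambda v:\lambda>0\}$ lies in $C$, so $\lambda v\in[h_t]$ reduces to $\lambda\langle v,u\rangle\leq -h_t(u)$ for every $u\in\omega$. Since $u\in\omega\subset\operatorname{int} C^\circ$ forces $\langle v,u\rangle<0$ and $h_t(u)=\bar h_K(u)+tf(u)>0$ for small $|t|$, this yields
\[
\varrho_{[h_t]}(v)=\sup_{u\in\omega}\frac{\bar h_K(u)+tf(u)}{|\langle v,u\rangle|},\qquad \varrho_K(v)=\sup_{u\in\omega}\frac{\bar h_K(u)}{|\langle v,u\rangle|}.
\]

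The key uniformity is the claim $c_0:=\min\{|\langle v,u\rangle|:v\in\overline{\Omega_C},\,u\in\omega\}>0$. Since $\omega$ is compactly contained in $\operatorname{int} C^\circ$, one has $\langle v,u\rangle<0$ for every $v\in\overline{C}\setminus\{o\}$ and every $u\in\omega$, and joint continuity together with compactness forces this minimum to be attained and strictly positive. Combined with the continuity of $f$ and $\bar h_K$ on $\omega$, this already yields part (b): the elementary inequality $|\sup A-\sup B|\leq \sup|A-B|$ applied to the two suprema above gives $|\varrho_{[h_t]}(v)-\varrho_K(v)|\leq |t|\,\|f\|_\infty/c_0$, so one may take $M=\|f\|_\infty/c_0$. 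The same bound also fixes $\delta$ so that $h_t>0$ on $\omega$ for $|t|\leq\delta$.

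For (a), the essential observation is that for almost every $v\in\Omega_C$ the radial Gauss map $\alpha_K$ is well-defined at $r_K(v)$, and at such $v$ the unique maximizer of the unperturbed supremum is $u_0:=\alpha_K(v)$, with the identity $\bar h_K(u_0)=|\langle v,u_0\rangle|\varrho_K(v)$. Pick any maximizer $u_t\in\omega$ of the perturbed supremum; compactness of $\omega$ and uniqueness of $u_0$ force $u_t\to u_0$ as $t\to 0$ via a standard upper-semicontinuous argmax argument. I would then sandwich: testing the perturbed sup with $u_0$ gives
\[
\varrho_{[h_t]}(v)\geq \varrho_K(v)+\frac{tf(u_0)}{|\langle v,u_0\rangle|},
\]
while the suboptimality of $u_t$ at $t=0$ yields
\[
\varrho_{[h_t]}(v)-\varrho_K(v)\leq \frac{tf(u_t)}{|\langle v,u_t\rangle|}.
\]
Dividing by $t$ and sending $t\to 0^{\pm}$, continuity of $f$ together with $u_t\to u_0$ collapse the sandwich to $f(u_0)/|\langle v,u_0\rangle|$, which rewrites as $f(\alpha_K(v))\varrho_K(v)/\bar h_K(\alpha_K(v))$ via the identity above.

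The main obstacle is the argmax convergence $u_t\to\alpha_K(v)$; it rests on the almost-everywhere uniqueness of the maximizer of the unperturbed sup, which in turn reflects the almost-everywhere differentiability of $\partial_i K$ as a convex surface (so $\alpha_K$ is defined a.e.). On the $\mathcal{H}^{n-1}$-negligible exceptional set where $r_K(v)$ admits several supporting hyperplanes among which the supremum is attained, one simply discards $v$, since (a) is asserted only almost everywhere. Part (b) is routine once $c_0>0$ is in hand.
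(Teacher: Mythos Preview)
The paper does not supply its own proof of this lemma; it simply records it as a known result from \cite{schneider weighted} (with the convex-body analogue in \cite{HLYZdual}). Your proposal, by contrast, gives a direct argument, and it is essentially the standard one: the explicit supremum formula
\[
\varrho_{[h_t]}(v)=\sup_{u\in\omega}\frac{h_t(u)}{|\langle v,u\rangle|}
\]
combined with the uniform lower bound $c_0>0$ on $|\langle v,u\rangle|$ (from compactness of $\overline{\Omega_C}\times\omega$ and strict negativity of the pairing on $(C\setminus\{o\})\times\operatorname{int}C^\circ$) immediately yields part (b), and the envelope/sandwich argument with argmax convergence gives part (a). This is exactly the line of reasoning one finds in the cited sources, so your approach matches theirs in substance if not in presentation.

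One small point worth making explicit: in your argument for (a), you use that the unique maximizer $u_0$ of the unperturbed supremum lies in $\omega$ and equals $\alpha_K(v)$. This requires knowing that $\alpha_K(v)\in\omega$ for almost every $v\in\Omega_C$, which holds because $K\in\mathcal{K}(C,\omega)$ implies $S_{\gamma^n}(K,\cdot)$ (hence the Gauss image of $\partial_i K$) is concentrated on $\omega$. You gesture at this with the almost-everywhere differentiability remark, but the containment $\alpha_K(v)\in\omega$ is the precise reason the maximizer is available inside the domain of the sup. With that clarification, the argument is complete.
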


For  convenience of calculations,  the Gaussian covolume $V_{G}(K) $ for $C$-pseudo-cone $K$ is defined by
\begin{align}\label{coco}
V_{G}(K)=\gamma^{n}(C \backslash K).
\end{align}

Then the variational formula for the Gaussian covolume is established.

\begin{lemma}\label{covari}
	Let  $K \in \mathcal{K}(C, \omega)$, for some nonempty, compact set  $\omega \subset \Omega$.  Let  $f: \omega \rightarrow \mathbb{R}$  be continuous, and let  $\left[\left.\bar{h}_{K}\right|_{\omega}+t f\right]$  be the Wulff shape associated with  $\left(C, \omega,\left.\bar{h}_{K}\right|_{\omega}+t f\right)$. Then
	\begin{equation}
	\lim _{t \rightarrow 0} \frac{V_{G}\left(\left[\left.\bar{h}_{K}\right|_{\omega}+t f\right]\right)-V_{G}(K)}{t}=\int_{\omega} f(u) dS_{\gamma^{n}}(K,  u).
	\end{equation}
	
\end{lemma}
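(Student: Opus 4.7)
The plan is to express the Gaussian covolume in radial coordinates along rays from the origin, differentiate under the integral via Lemma \ref{rho deri}, and then convert the resulting radial integral into a boundary integral against $S_{\gamma^n}(K,\cdot)$ via the transform formula in Lemma \ref{surface trans}.

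First I would write $V_G(K)=\gamma^n(C)-\gamma^n(K)$ (using that a $C$-pseudo-cone is contained in $C$) and, integrating in polar coordinates with $v\in\Omega_C$ and $r=\varrho_K(v),\ldots,\infty$, represent
\begin{equation*}
\gamma^n(K)=\frac{1}{(2\pi)^{n/2}}\int_{\Omega_C} G(\varrho_K(v))\,dv,\qquad G(s):=\int_s^{\infty} e^{-r^2/2}r^{n-1}\,dr,
\end{equation*}
and similarly for the Wulff shape $[h_t]$ with $h_t=\bar h_K|_\omega+tf$. Subtracting and dividing by $t$ gives
\begin{equation*}
\frac{V_G([h_t])-V_G(K)}{t}=-\frac{1}{(2\pi)^{n/2}}\int_{\Omega_C}\frac{G(\varrho_{[h_t]}(v))-G(\varrho_K(v))}{t}\,dv.
\end{equation*}

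Second, I would justify passing the limit inside the integral by the dominated convergence theorem. The derivative $G'(s)=-s^{n-1}e^{-s^2/2}$ is bounded on $[0,\infty)$ by some constant $M_G$, and Lemma \ref{rho deri}(b) supplies a constant $M$ with $|\varrho_{[h_t]}(v)-\varrho_K(v)|\le M|t|$ uniformly in $v\in\Omega_C$ and $|t|\le\delta$. The mean value theorem then bounds the integrand in absolute value by $MM_G$, which is integrable on $\Omega_C$ since $\Omega_C\subset S^{n-1}$ has finite spherical measure. The pointwise limit, for almost every $v\in\Omega_C$, is obtained by combining the chain rule with Lemma \ref{rho deri}(a):
\begin{equation*}
\lim_{t\to 0}\frac{G(\varrho_{[h_t]}(v))-G(\varrho_K(v))}{t}=-\varrho_K(v)^{n-1}e^{-\varrho_K(v)^2/2}\cdot\frac{f(\alpha_K(v))}{\bar h_K(\alpha_K(v))}\varrho_K(v),
\end{equation*}
where $f(\alpha_K(v))$ is understood to vanish on the null set of $v$ for which $\alpha_K(v)\notin\omega$ (the Wulff shape is unaffected by perturbing $f$ at normals outside $\omega$).

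Finally, using the support-function identity $\bar h_K(\alpha_K(v))=\varrho_K(v)\,|\langle v,\alpha_K(v)\rangle|$ (valid a.e.\ since $\varrho_K(v)v\in H_K(\alpha_K(v))$), the limiting integrand simplifies to $f(\alpha_K(v))\,e^{-\varrho_K(v)^2/2}\,\varrho_K(v)^{n-1}/|\langle v,\alpha_K(v)\rangle|$. Applying Lemma \ref{surface trans} then converts the resulting radial integral exactly into $\int_\omega f(u)\,dS_{\gamma^n}(K,u)$, with the overall sign flipped once by the derivative of $G$ and once by the $V_G=\gamma^n(C)-\gamma^n(\cdot)$ relation, yielding the claimed identity. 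The main obstacle is the uniform integrable majorant for the difference quotient on all of $\Omega_C$ (where $\varrho_K$ itself need not be bounded as $v$ approaches $\partial\Omega_C$); this is resolved precisely because $G'$ is globally bounded, so that the Lipschitz estimate from Lemma \ref{rho deri}(b) alone produces a constant dominator.
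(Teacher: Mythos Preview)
Your proposal is correct and follows essentially the same approach as the paper's proof: both express the covolume in polar coordinates over $\Omega_C$, invoke Lemma~\ref{rho deri} for the pointwise derivative and the uniform Lipschitz bound, apply dominated convergence, and then convert the radial integral to a boundary integral against $S_{\gamma^n}(K,\cdot)$. The only cosmetic difference is that the paper integrates $\int_0^{\varrho_K(v)}$ for $V_G$ directly while you integrate $\int_{\varrho_K(v)}^\infty$ for $\gamma^n(K)$ and subtract from $\gamma^n(C)$; and the paper cites Lemma~\ref{integral trans} where you cite the derived Lemma~\ref{surface trans}.
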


\begin{proof}
	For  convenience we denote $h_{t}(u):=\bar{h}_{K}(u)+tf(u)$ for $u\in\omega$ and small enough $|t|$. From  (\ref{coco}) we have
	$$V_{G}\left(\left[h_{t}\right]\right)=\frac{1}{(\sqrt{2 \pi})^{n}}\int_{\Omega_{C}} \int_{0}^{\varrho_{\left[h_{t}\right]}(v)} e^{-\frac{r^{2}}{2}} r^{n-1} dr d v=\frac{1}{(\sqrt{2 \pi})^{n}}\int_{\Omega_{C}} F_{t}(v)dv,$$
	where $F_{t}(v)=\int_{0}^{\varrho_{\left[h_{t}\right]}(v)} e^{-\frac{r^{2}}{2}} r^{n-1} dr$.
	Then we obtain
	\begin{align*}
		\frac{F_{t}(v)-F_{0}(v)}{t} & =\frac{1}{t} \int_{\varrho_{K}(v)}^{\varrho_{\left[h_{t}\right]}(v)} e^{-\frac{r^{2}}{2}} r^{n-1} d r \\
		& =\frac{\varrho_{\left[h_{t}\right]}(v)-\varrho_{K}(v)}{t} \cdot \frac{1}{\varrho_{\left[h_{t}\right]}(v)-\varrho_{K}(v)} \int_{\varrho_{K}(v)}^{\varrho_{\left[h_{t}\right]}(v)} e^{-\frac{r^{2}}{2}} r^{n-1} d r.
	\end{align*}
From Lemma \ref{rho deri}, the first term converges to $\frac{f\left(\alpha_{K}(v)\right)}{\bar{h}_{K}\left(\alpha_{K}(v)\right)} \varrho_{K}(v)$ as $t\to 0$. The second term converges to $e^{-\frac{\varrho_{K}^{2}}{2}} \varrho_{K}^{n-1}$.  Then we get
	\begin{equation*}
		\lim _{t \rightarrow 0} \frac{F_{t}(v)-F_{0}(v)}{t}=e^{-\frac{\varrho_{K}^{2}}{2}} \frac{f\left(\alpha_{K}(v)\right)}{\bar{h}_{K}\left(\alpha_{K}(v)\right)} \varrho_{K}^{n}(v)
	\end{equation*}
	for almost all $v \in \Omega_{C}$. From Lemma \ref{rho deri} we have
	$$\left|\frac{F_{t}(v)-F_{0}(v)}{t}\right|\le M$$
	for some $M>0$ for sufficiently small $|t|$. By the dominated convergence theorem, $F(y)=f(\nu_{K}(y))e^{-\frac{|y|^{2}}{2}}$ in Lemma \ref{integral trans}, and the fact that  $S_{\gamma^{n}}(K,  \cdot)$ is concentrated on $\omega$  for any $K\in\mathcal{K}(C,\omega)$,  we can get
	\begin{align*}
		\lim _{t \rightarrow 0} \frac{V_{G}\left(\left[h_{t}\right]\right)-V_{G}(K)}{t}&=\frac{1}{(\sqrt{2 \pi})^{n}}\int_{\Omega_{C}}	\lim _{t \rightarrow 0} \frac{F_{t}(v)-F_{0}(v)}{t}dv\\
		&=\frac{1}{(\sqrt{2 \pi})^{n}}\int_{\Omega_{C}}e^{-\frac{\varrho_{K}^{2}}{2}} \frac{f\left(\alpha_{K}(v)\right)}{\bar{h}_{K}\left(\alpha_{K}(v)\right)} \varrho_{K}^{n}(v)dv\\
		&=\frac{1}{(\sqrt{2 \pi})^{n}}\int_{\partial_{i}K}f(\nu_{K}(y))e^{-\frac{|y|^{2}}{2}}d\mathcal{H}^{n-1}(y)\\
		&=\int_{\Omega}f(u)dS_{\gamma^{n}}(K,  u)\\
		&=\int_{\omega}f(u)dS_{\gamma^{n}}(K,  u).
		\end{align*}
\end{proof}

  The following Lemma shows the variational formula for the \textbf{real} Gaussian volume of $K \in \mathcal{K}(C, \omega)$.

 \begin{lemma}\label{vari formula}
 	Let  $K \in \mathcal{K}(C, \omega)$, for some nonempty, compact set  $\omega \subset \Omega$.  Let  $f: \omega \rightarrow \mathbb{R}$  be continuous, and let  $\left[\left.\bar{h}_{K}\right|_{\omega}+t f\right]$  be the Wulff shape associated with  $\left(C, \omega,\left.\bar{h}_{K}\right|_{\omega}+t f\right)$. Then
 	\begin{equation}
 		\lim _{t \rightarrow 0} \frac{\gamma^{n}\left(\left[\left.\bar{h}_{K}\right|_{\omega}+t f\right]\right)-\gamma^{n}(K)}{t}=-\int_{\omega} f(u) dS_{\gamma^{n}}(K,  u).
 	\end{equation}
 	
 \end{lemma}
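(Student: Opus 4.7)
The plan is to reduce this statement directly to Lemma \ref{covari} via the conservation identity
$$\gamma^{n}(K) + V_{G}(K) = \gamma^{n}(C).$$
To establish this identity, I would first observe that every $C$-pseudo-cone $K$ is contained in $C$: indeed, if $x \in K$, then the ray $\{\lambda x : \lambda \geq 1\}$ lies in $K$ by the pseudo-cone property, so the direction $x/|x|$ lies in $\operatorname{rec} K = C$, which forces $x \in C$ since $C$ is a cone. Consequently $K$ and $C \setminus K$ give a disjoint decomposition of $C$, and the identity above follows from the additivity of the (finite) probability measure $\gamma^{n}$.

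By Lemma \ref{rho deri}, for all sufficiently small $|t|$ the perturbed function $\bar{h}_{K}|_{\omega} + tf$ is positive, so the Wulff shape $[\bar{h}_{K}|_{\omega} + tf]$ again belongs to $\mathcal{K}(C,\omega)$ and the same conservation identity applies to it. Subtracting, the constant $\gamma^{n}(C)$ cancels and one obtains
$$\gamma^{n}([\bar{h}_{K}|_{\omega} + tf]) - \gamma^{n}(K) = -\bigl(V_{G}([\bar{h}_{K}|_{\omega} + tf]) - V_{G}(K)\bigr).$$
Dividing by $t$, letting $t \to 0$, and invoking Lemma \ref{covari} on the right-hand side immediately produces the claimed formula; the overall minus sign is nothing but the manifestation of the duality between the Gaussian volume and the Gaussian covolume. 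No real obstacle arises beyond checking the containment $K \subseteq C$, which is already built into the definition of a $C$-pseudo-cone, so the argument is a short one-liner once Lemma \ref{covari} is in hand.
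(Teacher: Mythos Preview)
Your proposal is correct and follows essentially the same approach as the paper: both proofs rest on the identity $\gamma^{n}(K)+V_{G}(K)=\gamma^{n}(C)$ for any $C$-pseudo-cone, subtract, divide by $t$, and invoke Lemma \ref{covari}. The paper merely states this identity as a fact (noting in addition that $\gamma^{n}(C)<\tfrac12$), whereas you supply a short justification of $K\subseteq C$; otherwise the arguments are identical.
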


\begin{proof}
	The following fact is true for any $C$-pseudo-cone $L$,
	$$\gamma^{n}(L)+V_{G}(L)=\gamma^{n}(C)<\frac{1}{2}.$$
	Then, by Lemma \ref{covari}, it follows that
		\begin{align*}
		\lim _{t \rightarrow 0} \frac{\gamma^{n}\left(\left[h_{t}\right]\right)-\gamma^{n}(K)}{t}&=	 \lim _{t \rightarrow 0} \frac{\left( \gamma^{n}(C)-V_{G}\left(\left[h_{t}\right]\right)\right) -\left( \gamma^{n}(C)-V_{G}(K)\right) }{t}\\
		&=-	\lim _{t \rightarrow 0} \frac{V_{G}\left(\left[h_{t}\right]\right)-V_{G}(K)}{t}\\
		&=-\int_{\omega} f(u) dS_{\gamma^{n}}(K,  u).
	\end{align*}
\end{proof}

The last Lemma in this section shows the continuity of Gaussian volume  as follows.
\begin{lemma}\label{continuity of Gaussian volume}
	The Gaussian volume functional $\gamma^{n}(\cdot)$ of C-pseudo-cones is continuous.
\end{lemma}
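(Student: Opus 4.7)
My plan is to exploit the exponential decay of the Gaussian density together with the truncation-based definition of $C$-pseudo-cone convergence (Definition \ref{conv def}). Given a convergent sequence $K_j \to K$, there exists $t_0 > 0$ such that the Hausdorff convergence $K_j^{-}(t) \to K^{-}(t)$ holds for every $t \ge t_0$. Since every $C$-pseudo-cone $L$ satisfies $L \subseteq C$ (its support function is non-positive on $C^{\circ}$, forcing $L \subseteq C^{\circ\circ} = C$), its Gaussian volume splits as $\gamma^n(L) = \gamma^n(L^{-}(t)) + \gamma^n(L^{+}(t))$ for any $t > 0$. Given $\varepsilon > 0$, I would first choose $t \ge t_0$ large enough that $\gamma^n(C^{+}(t)) < \varepsilon/3$; this is possible because $\gamma^n(C) \le 1/2$ and $C^{+}(t) \searrow \emptyset$ as $t \to \infty$. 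Since $K_j^{+}(t), K^{+}(t) \subseteq C^{+}(t)$, both tails are then uniformly bounded by $\varepsilon/3$.

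On the truncated part, the sets $K_j^{-}(t)$ and $K^{-}(t)$ are compact convex bodies sitting inside the common compact slab $C^{-}(t)$ and converging in the Hausdorff metric. By a standard convex-geometric fact, for convex sets converging in Hausdorff metric the symmetric differences have vanishing Lebesgue measure, and since the Gaussian density $(2\pi)^{-n/2} e^{-|x|^2/2}$ is bounded on $C^{-}(t)$, it follows that $\gamma^n(K_j^{-}(t)) \to \gamma^n(K^{-}(t))$. Choosing $j$ large enough that $|\gamma^n(K_j^{-}(t)) - \gamma^n(K^{-}(t))| < \varepsilon/3$ and applying the triangle inequality to the decomposition above yields $|\gamma^n(K_j) - \gamma^n(K)| < \varepsilon$, proving the claim.

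The only real subtlety is the noncompactness inherent in $C$-pseudo-cones: the classical continuity of volume under Hausdorff convergence only applies to compact convex bodies, and in the present setting the Lebesgue volume of a $C$-pseudo-cone is typically infinite. What rescues the argument is precisely the exponential decay of the Gaussian density, which makes the tail contribution $\gamma^n(C^{+}(t))$ uniformly small and thereby reduces the problem to the compact truncations, where standard tools apply directly.
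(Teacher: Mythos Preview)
Your proposal is correct and follows essentially the same approach as the paper's proof: both arguments split $\gamma^n(K_j)$ into the truncated part on $C^-(t)$ and the tail on $C^+(t)$, choose $t$ large so that $\gamma^n(C^+(t))$ is small, and then invoke continuity of the Gaussian volume for the compact truncations converging in the Hausdorff metric. The only cosmetic differences are that the paper splits into two $\varepsilon/2$ pieces (bounding $|\gamma^n(K_j^+(t))-\gamma^n(K^+(t))|\le\gamma^n(C^+(t))$ directly) whereas you use three $\varepsilon/3$ pieces, and the paper cites \cite{shana2024} for continuity on compact convex bodies while you supply the standard symmetric-difference argument yourself.
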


\begin{proof}
Let $K_{j}$ be a sequence of C-pseudo-cones, which converges to a C-pseudo-cone $K$, that is, $K_{j}\rightarrow K$.  Let $\epsilon>0$ be given, we can choose a sufficiently large $t$ such that $\gamma^{n}(C^{+}(t))<\frac{\epsilon}{2}$ and $K_{j}\cap C^{-}(t)\neq \emptyset$ for $j\in \mathbb{N}$. The convergence of C-pseudo-cones in Definition \ref{conv def} yields
	$$K_{j}\cap C^{-}(t)\rightarrow K\cap C^{-}(t).$$
	From the continuity of Gaussian volume for convex bodies in \cite{shana2024}, there exists $j_{0}$ such for all $j>j_{0}$,
	$$|\gamma^{n}(K_{j}\cap C^{-}(t))-\gamma^{n}(K\cap C^{-}(t))|<\frac{\epsilon}{2},$$
	then we have
	\begin{align*}
	&|\gamma^{n}(K_{j})-\gamma^{n}(K)|\\
	=&|\left( \gamma^{n}(K_{j}\cap C^{+}(t))+\gamma^{n}(K_{j}\cap C^{-}(t))\right) -\left( \gamma^{n}(K\cap C^{+}(t))+\gamma^{n}(K\cap C^{-}(t))\right) |\\
	=&|\left( \gamma^{n}(K_{j}\cap C^{+}(t))-\gamma^{n}(K\cap C^{+}(t))\right) +\left( \gamma^{n}(K_{j}\cap C^{-}(t))-\gamma^{n}(K\cap C^{-}(t))\right) |\\
	\le&| \gamma^{n}(K_{j}\cap C^{+}(t))-\gamma^{n}(K\cap C^{+}(t))| +| \gamma^{n}(K_{j}\cap C^{-}(t))-\gamma^{n}(K\cap C^{-}(t)) |\\
	\le&\gamma^{n}(C^{+}(t))+| \gamma^{n}(K_{j}\cap C^{-}(t))-\gamma^{n}(K\cap C^{-}(t)) |\\
	<&\frac{\epsilon}{2}+\frac{\epsilon}{2}=\epsilon.
	\end{align*}
	
Therefore,	$\gamma^{n}(K_{j})\rightarrow\gamma^{n}(K)$.
\end{proof}

\section{  Gaussian Minkowski problem for C-pseudo-cones}

\begin{comment}

For the proof of Gaussian Minkowski type theorem, the  core is to find a variational functional, such that  the extremum of this functional exactly the solution to the existence problem. Due to the lack of   homogeneity of Gaussian measure and  the finiteness of total mass, on the sides the different properties of pseudo-cones from convex bodies, these differences constitute the main obstacles to the problem of pseudo-cones in Gaussian space.

So we want to find a new variational functional to solve this problem, different from  the Homogeneous weighted Minkowski problem \cite{schneider weighted,schneider weighted cone} and classical Gaussian Minkowski problem in convex bodies \cite{Huang2021}. Firstly, we employ  Gaussian volume instead of covolume in the usual pseudo-cone theory.

The core of the proof in the  Minkowski-type existence theorem is to identify a variational functional such that the extremum of this functional exactly corresponds to the solution of the existence problem. The main challenges arise from the lack of homogeneity in the Gaussian measure and its unusual decay rate. Additionally, the substantial difference  between unbounded closed convex sets and convex bodies  further complicate the problem. These disparities pose significant obstacles to addressing the problem of pseudo-cones in Gaussian space.
\end{comment}
The following existence theorem of solutions to the Gaussian  Minkowski problem will be established through a variational argument combined with approximation method. Notably, the variational techniques in homogeneous weighted Minkowski problem \cite{schneider weighted cone,schneider weighted} and the classical Gaussian Minkowski problem for convex bodies \cite{Huang2021} are not directly applicable in this setting.
So we introduce a new variational functional to solve this problem. Specifically, we employ the Gaussian volume instead of the covolume in the usual pseudo-cone theory.

\begin{theorem}\label{cone minkowski}
	Let $\mu$ be a nonzero, finite Borel measure on $\Omega$. Then there exists a $C$-pseudo-cone $K$ with
		$$cS_{\gamma^{n}}(K,\cdot)=\mu,$$
	where $c=\frac{\int_{\Omega}\bar{h}_{K}d\mu}{\gamma^{n}(K)}.$
\end{theorem}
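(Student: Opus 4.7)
The plan is to realize $K$ as a maximizer of the variational functional
$$\Phi(K) := \gamma^{n}(K)\cdot \int_{\omega}\bar h_{K}(u)\,d\mu(u),$$
defined on the class $\mathcal K(C,\omega)$, where $\omega\subset\Omega$ is a compact set containing $\mathrm{supp}\,\mu$ (which will be arranged by approximation when $\mu$ is not compactly supported). The choice of $\Phi$ is dictated by Lemma \ref{vari formula}: along a Wulff-shape family $h_{t}=\bar h_{K}+tf$ its first variation is
$$\frac{d}{dt}\Phi([h_{t}])\Big|_{t=0} = \gamma^{n}(K)\int_{\omega} f\,d\mu \;-\;\Bigl(\int_{\omega}\bar h_{K}\,d\mu\Bigr)\int_{\omega} f\,dS_{\gamma^{n}}(K,\cdot);$$
vanishing of this for every continuous test $f$ on $\omega$ is precisely the desired identity $\mu=(\int_{\omega}\bar h_{K}d\mu/\gamma^{n}(K))\,S_{\gamma^{n}}(K,\cdot)$, with the constant already in the form claimed by the theorem.

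\textbf{Step 1 (compactly supported $\mu$).} Take $\omega:=\mathrm{supp}\,\mu$. Choosing an explicit $K_{0}\in\mathcal K(C,\omega)$ with $\bar h_{K_{0}}>0$ on $\omega$ shows $\sup\Phi>0$. For a maximizing sequence $\{K_{j}\}$ I would rule out both degeneracies: if $\mathrm{dist}(o,K_{j})\to 0$ then $\bar h_{K_{j}}\to 0$ uniformly on $\omega$ and $\Phi(K_{j})\to 0$; if $\mathrm{dist}(o,K_{j})\to\infty$ the Gaussian tail forces $\gamma^{n}(K_{j})\to 0$ and again $\Phi(K_{j})\to 0$. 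Consequently $\mathrm{dist}(o,K_{j})$ stays trapped in a positive compact interval, and Lemma \ref{select} yields $K_{j_{k}}\to K^{*}\in\mathcal K(C,\omega)$; uniform convergence of $\bar h_{K_{j_{k}}}$ on $\omega$ via Lemma \ref{compace conv}, together with Lemma \ref{continuity of Gaussian volume}, makes $\Phi$ continuous along the subsequence, so $\Phi(K^{*})=\sup\Phi>0$. Since $K^{*}\in\mathcal K(C,\omega)$, the absolute support function $\bar h_{K^{*}}$ is uniformly positive on $\omega$, so for any continuous $f$ the perturbation $h_{t}=\bar h_{K^{*}}+tf$ is admissible for small $|t|$; Lemma \ref{vari formula} applied at $K^{*}$ then delivers the Euler-Lagrange identity above.

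\textbf{Step 2 (general $\mu$).} Exhaust $\Omega$ by an increasing family of compact sets $\omega_{N}$ with $\bigcup_{N}\omega_{N}=\Omega$, and set $\mu_{N}:=\mu|_{\omega_{N}}$, so that $\mu_{N}(\Omega)\le\mu(\Omega)$ and $\int g\,d\mu_{N}\to\int g\,d\mu$ for every bounded continuous $g$ on $\Omega$. Step 1 produces $K_{N}\in\mathcal K(C,\omega_{N})$ solving $\mu_{N}=c_{N}\,S_{\gamma^{n}}(K_{N},\cdot)$ with $c_{N}=\int\bar h_{K_{N}}\,d\mu_{N}/\gamma^{n}(K_{N})$. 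Granting uniform bounds $0<a\le\mathrm{dist}(o,K_{N})\le b<\infty$, Lemma \ref{select} extracts a subsequential limit $K_{N}\to K$; Lemma \ref{weak conve} (or equivalently a direct dominated-convergence argument based on the integral transform in Lemma \ref{surface trans}, applied to continuous test functions with compact support in $\Omega$) gives the weak convergence $S_{\gamma^{n}}(K_{N},\cdot)\to S_{\gamma^{n}}(K,\cdot)$, and passing to the limit in the Minkowski identity yields $\mu=c\,S_{\gamma^{n}}(K,\cdot)$ with $c=\int_{\Omega}\bar h_{K}\,d\mu/\gamma^{n}(K)$.

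The main obstacle lies in Step 2: producing the uniform a priori bounds on $\mathrm{dist}(o,K_{N})$ as both the supports $\omega_{N}$ and the normalizing constants $c_{N}$ vary with $N$. A lower bound is required to keep $\bar h_{K_{N}}$ from collapsing on the enlarging supports (and $c_{N}$ from blowing up), while an upper bound is needed to prevent $\gamma^{n}(K_{N})$ from decaying to zero. I expect both estimates to be extractable from the Minkowski identity $\mu_{N}=c_{N}S_{\gamma^{n}}(K_{N},\cdot)$ combined with the Gaussian tail decay, along the lines of the uniform estimates in Schneider's homogeneous weighted setting \cite{schneider weighted cone}, but here without any homogeneity of the weight, which is what makes the standard variational schemes inapplicable and forces the choice of $\Phi$ above.
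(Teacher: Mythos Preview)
Your overall strategy coincides with the paper's: the same variational functional $\Phi(K)=\gamma^{n}(K)\int_{\omega}\bar h_{K}\,d\mu$ for compactly supported $\mu$, followed by an exhaustion $\omega_{N}\uparrow\Omega$. Two points deserve attention.

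First, in Step~1 your argument for $\mathrm{dist}(o,K_{j})\to\infty$ is incomplete as written. Saying only that $\gamma^{n}(K_{j})\to 0$ does not imply $\Phi(K_{j})\to 0$, because $\int_{\omega}\bar h_{K_{j}}\,d\mu$ grows with $r_{j}:=\mathrm{dist}(o,K_{j})$. You must use $\bar h_{K_{j}}\le r_{j}$ and $\gamma^{n}(K_{j})\le\mathbb{P}(|x|>r_{j})$ together, so that $\Phi(K_{j})\le\mu(\omega)\,r_{j}\,\mathbb{P}(|x|>r_{j})\to 0$ by the exponential Gaussian tail. This is exactly the estimate the paper writes down; it is a small fix but an essential one.

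Second, and more substantively, the uniform bounds in Step~2 are the real content, and the paper does \emph{not} obtain them from the Minkowski identity as you suggest. Instead it proves a monotonicity lemma (Lemma~\ref{uniesti}): since $K_{i}\in\mathcal K(C,\omega_{i})$ is also $C$-determined by $\omega_{i+1}$ (because $K_{i}=C\cap\bigcap_{u\in\Omega}H_{K_{i}}^{-}(u)$), and since $\mu_{i}\le\mu_{i+1}$, one has
\[
I_{\mu_{i}}(\bar h_{K_{i}})=\gamma^{n}(K_{i})\int_{\omega_{i}}\bar h_{K_{i}}\,d\mu_{i}\le\gamma^{n}(K_{i})\int_{\omega_{i+1}}\bar h_{K_{i}}\,d\mu_{i+1}\le I_{\mu_{i+1}}(\bar h_{K_{i+1}}),
\]
so the supremum values are nondecreasing in $i$ and hence bounded below by $I_{\mu_{i_{0}}}(\bar h_{K_{i_{0}}})>0$. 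Combining this uniform positive lower bound with the same estimate as above, $I_{\mu_{i}}(\bar h_{K_{i}})\le\mu(\Omega)\,r_{i}\,\mathbb{P}(|x|>r_{i})$, rules out both $r_{i}\to 0$ and $r_{i}\to\infty$ at once. Your speculation that the bounds should follow from the equation $\mu_{N}=c_{N}S_{\gamma^{n}}(K_{N},\cdot)$ may be workable, but you give no argument, and the monotonicity device is both simpler and what the paper actually does. Finally, in passing to the limit the paper localizes carefully: for fixed $i$ it picks compact $\beta$ with $\omega_{i}\subset\mathrm{int}\,\beta$, uses $\nu_{K_{j}}^{-1}(\omega_{i})=\nu_{K_{j}^{(\beta)}}^{-1}(\omega_{i})$ and $K_{j}^{(\beta)}\to K^{(\beta)}$ (Lemma~\ref{compace conv}), so that Lemma~\ref{weak conve} applies on the fixed class $\mathcal K(C,\beta)$; your allusion to compactly supported test functions is headed in the right direction but would need this localization to be made precise, since Lemma~\ref{weak conve} is stated only for sequences in a fixed $\mathcal K(C,\omega)$.
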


Let $\mu$ be a nonzero, finite Borel measure on $\omega$ with some compact $\omega \subset \Omega$.
For $f\in C^{+}(\omega)$, where  $C^{+}(\omega)$ denotes the set of continuous functions $f:\omega\rightarrow (0,\infty)$, we define a functional $I_{\mu}:  C^{+}(\omega)\rightarrow (0,\infty)$ by
$$I_{\mu}(f):=\gamma^{n}([f])\int_{\omega} f d\mu.$$
Here $[f]$ denotes the Wulff shape associated with $(C,\omega,f)$.  This functional plays a central role in the variational approach to the Gaussian Minkowski problem for $C$-pseudo-cones. We consider the following optimization problem
$$\sup \{I_{\mu}(f) :f\in C^{+}(\omega)\}.$$

It is easy to get that  $I_{\mu}(\bar{h}_{K})=\gamma^{n}(K)\int_{\omega} \bar{h}_{K} d\mu>0$ for any $K\in \mathcal{K}(C,\omega)$.

\begin{lemma}\label{compact cone minkowski}
		Let $\mu$ be a  nonzero, finite Borel measure on $\omega$ with some nonempty compact $\omega \subset \Omega$. Then there exists a $C$-pseudo-cone $K\in\mathcal{K}(C,\omega)$ with
		 $$\frac{\int_{\omega}\bar{h}_{K}d\mu}{\gamma^{n}(K)}S_{\gamma^{n}}(K,\cdot)=\mu.$$
	
\end{lemma}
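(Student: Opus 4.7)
The plan is to realize $K$ as a maximizer of the functional
$$\Phi:\mathcal{K}(C,\omega)\to(0,\infty),\qquad \Phi(K):=\gamma^n(K)\int_\omega \bar h_K\,d\mu,$$
and to read off the claimed identity from the first-order condition at the maximizer. Positivity of $\sup\Phi$ is immediate by testing on any fixed $K_0\in\mathcal{K}(C,\omega)$, for which $\bar h_{K_0}$ is uniformly positive on $\omega$ and $\gamma^n(K_0)>0$. Finiteness follows because if $M:=\max_\omega \bar h_K$, then $K$ is contained in some half-space $H^-(u_\ast,-M)$ with $u_\ast\in\omega$, so by rotational invariance of the standard Gaussian $\gamma^n(K)\leq\gamma^1((-\infty,-M])$, which decays like $e^{-M^2/2}$; since $\int_\omega \bar h_K\,d\mu\leq M\mu(\omega)$, the product $\Phi(K)$ is absolutely bounded.

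I would then take a maximizing sequence $\{K_j\}\subset\mathcal{K}(C,\omega)$. The estimate above forces $M_j:=\max_\omega \bar h_{K_j}$ to stay bounded by some $M_0$, for otherwise $\Phi(K_j)\to 0$. With $\delta:=-\max_{u\in\omega}\langle\mathfrak{v},u\rangle>0$ (strict by $\mathfrak{v}\in\operatorname{int} C\cap\operatorname{int}(-C^\circ)$ and compactness of $\omega$), the point $(M_0/\delta)\mathfrak{v}$ lies in the constant-$M_0$ Wulff shape $[M_0]\subseteq K_j$, so $\operatorname{dist}(o,K_j)\leq M_0|\mathfrak{v}|/\delta$ uniformly in $j$. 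Coupled with a uniform lower bound $\operatorname{dist}(o,K_j)\geq a>0$, the selection Lemma \ref{select} yields a subsequence $K_j\to K$ for some $C$-pseudo-cone $K$, and $K\in\mathcal{K}(C,\omega)$ by Lemma \ref{compace conv}. The continuity of $\gamma^n$ on $C$-pseudo-cones (Lemma \ref{continuity of Gaussian volume}) and the uniform convergence $\bar h_{K_j}\to\bar h_K$ on $\omega$ (also from Lemma \ref{compace conv}) then yield $\Phi(K_j)\to\Phi(K)$, so $K$ attains the supremum.

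To derive the Euler-Lagrange equation, take any $f\in C(\omega)$, set $h_t:=\bar h_K+tf$, and let $[h_t]\in\mathcal{K}(C,\omega)$ be the associated Wulff shape. Combining Lemma \ref{vari formula} for $\gamma^n([h_t])$ with the linear dependence of $\int_\omega h_t\,d\mu$ on $t$, the product rule at $t=0$ gives
$$0=\left.\frac{d}{dt}\Phi([h_t])\right|_{t=0}=-\Bigl(\int_\omega \bar h_K\,d\mu\Bigr)\int_\omega f\,dS_{\gamma^n}(K,\cdot)+\gamma^n(K)\int_\omega f\,d\mu.$$
The arbitrariness of $f\in C(\omega)$ identifies the two measures and rearranges to $\mu=\frac{\int_\omega \bar h_K\,d\mu}{\gamma^n(K)}\,S_{\gamma^n}(K,\cdot)$, as desired.

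The most delicate point is securing the uniform lower bound $\operatorname{dist}(o,K_j)\geq a$ (equivalently $\min_\omega \bar h_{K_j}\geq a$) along the maximizing sequence: if $\mu$ puts little mass near some $u_0\in\omega$ at which $\bar h_{K_j}(u_0)\to 0$, then $\int_\omega \bar h_{K_j}\,d\mu$ scarcely notices, so $\Phi$ does not manifestly penalize degeneration. I would address this via a perturbation argument, replacing a would-be degenerating $K_j$ with the Wulff shape $[\max(\bar h_{K_j},\varepsilon)]$ and then compensating by a controlled rescaling in a direction carrying positive $\mu$-mass, producing a strict increase of $\Phi$ that contradicts maximality. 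Making this estimate quantitative is the main technical hurdle of the existence proof.
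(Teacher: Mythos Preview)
Your overall strategy---maximize $\gamma^n(K)\int_\omega\bar h_K\,d\mu$ over $\mathcal{K}(C,\omega)$, extract a maximizer via the selection lemma, and read off the measure identity from Lemma~\ref{vari formula}---is exactly the paper's. The upper bound on $\operatorname{dist}(o,K_j)$ and the Euler--Lagrange step are essentially correct (with one notational slip: what you actually differentiate is $t\mapsto \gamma^n([h_t])\int_\omega h_t\,d\mu$, the paper's $I_\mu(h_t)$, not $\Phi([h_t])=\gamma^n([h_t])\int_\omega \bar h_{[h_t]}\,d\mu$, since $\bar h_{[h_t]}\ge h_t$ in general; the derivative computation is still valid because $I_\mu(h_t)\le\Phi([h_t])\le\Phi(K)=I_\mu(h_0)$ with equality at $t=0$).

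The gap is in the lower bound, where you have misidentified the difficulty. First, your parenthetical ``equivalently $\min_\omega\bar h_{K_j}\ge a$'' is wrong: that condition implies $\operatorname{dist}(o,K_j)\ge a$ but not conversely (a single supporting hyperplane can slide toward the origin while the nearest point of $K_j$ stays away), so you are chasing a strictly stronger condition than Lemma~\ref{select} requires. Second, and this is the point, the lower bound on $r_j:=\operatorname{dist}(o,K_j)$ is immediate rather than delicate. For any $u\in\omega$ and any nearest point $x_j\in K_j$ with $|x_j|=r_j$, one has $\bar h_{K_j}(u)=-h_{K_j}(u)\le -\langle u,x_j\rangle\le |x_j|=r_j$. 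Hence if $r_j\to 0$ along a subsequence, then $\int_\omega\bar h_{K_j}\,d\mu\le r_j\,\mu(\omega)\to 0$ while $\gamma^n(K_j)\le 1$, so $\Phi(K_j)\to 0$, contradicting $\Phi(K_j)\to\sup\Phi>0$. The paper runs precisely this argument, in fact packaging both the upper and lower bounds into the single estimate $\Phi(K_j)\le r_j\,\mu(\omega)\,\mathbb{P}(|x|>r_j)$, which vanishes as $r_j\to 0$ and as $r_j\to\infty$. Your proposed perturbation argument is therefore unnecessary.
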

\begin{proof}

	For each $f\in C^{+}(\omega)$, by the definition of the Wulff shape, we have $\bar{h}_{[f]}\ge f$. Therefore
	$$I_{\mu}(f)=\gamma^{n}([f])\int_{\omega} f d\mu\le \gamma^{n}([f])\int_{\omega}\bar{h}_{[f]}  d\mu=I_{\mu}(\bar{h}_{[f]} ).$$
	This inequality implies that the supremum of $I_{\mu}(f)$ is achieved by support functions of $K(C,\omega)$.
	Let $\bar{h}_{K_{i}}$ be a maximizing sequence for
$I_{\mu}$ with $K_i\in \mathcal{K}(C,\omega)$, i.e.,
	 %If $g=\bar{h}_{K}$ is a maximizer of $I_{\mu}(f)$, from Lemma \ref{vari formula}, we have
		%\begin{equation}\label{var compact}
		%0=\left.\frac{d}{d t}\right|_{t=0}I_{\mu}(\bar{h}_{K}+tf)= \gamma^{n}(K)\int_{\omega}fd\mu-\int_{\omega}fdS_{\gamma^{n}}(K,\cdot)\int_{\omega}\bar{h}_{K}d\mu.
%	\end{equation}
	$$\lim _{i \rightarrow \infty} I_{\mu}(\bar{h}_{K_{i}})=\sup \left\{I_{\mu}(f): f \in C^{+}\left(\omega\right)\right\}>0.$$
	Define
	$$r_{i}=\min\{r:rB\cap K_{i}\neq\emptyset \}.$$
If there is a subsequence, still denoted by $r_{i}$, such that $r_{i}\rightarrow\infty$. Note that the support function of $K_{i} $ satisfies
	$$\bar{h}_{K_{i}}\le r_{i},$$
	and the set inclusion
	$K_{i}\subset \mathbb{R}^{n}\backslash r_{i}B$ implies
\begin{align*}
\gamma^{n}(K_{i})\le \gamma^{n}(\mathbb{R}^{n}\backslash r_{i}B)=1-\gamma^{n}(r_{i}B).
\end{align*}
 Then we have the estimate as $r_{i}\rightarrow\infty$
	\begin{align*}
		 I_{\mu}(\bar{h}_{K_{i}})&=\gamma^{n}(K_{i})\int_{\omega}\bar{h}_{K_{i}}  d\mu\\
		&\le (1-\gamma^{n}(r_{i}B))\mu(\omega) r_{i}\\
		&=\mathbb{P}(|x|>r_{i})\mu(\omega) r_{i}\rightarrow 0,
	\end{align*}
where $\mathbb{P}$ denotes  the probability corresponding to the standard normal distribution in $\mathbb{R}^{n}$.	The   convergence to zero follows from the exponential decay of normal distribution. This contradicts the fact that $\lim _{i \rightarrow \infty} I_{\mu}(\bar{h}_{K_{i}})=\sup I_{\mu}>0$ mentioned above.

 On the other hand, if $r_{i}\rightarrow 0$,
then
$$\mathbb{P}(|x|>r_{i})\rightarrow 1,$$
 by above estimate we also have
 	\begin{align*}
 	 I_{\mu}(\bar{h}_{K_{i}})&=\gamma^{n}(K_{i})\int_{\omega}\bar{h}_{K_{i}}  d\mu\\
 	&\le\mathbb{P}(|x|>r_{i})\mu(\omega) r_{i}\rightarrow 0,
 \end{align*}
  a contradiction.

These cases imply that there exist two constants $m$ and $M$ such that $0<m<\operatorname{dist}(o,\partial K_{i})<M$. By Lemma \ref{select}, there exists a  subsequence, denoted still as $K_{i}$, such that $K_{i}\rightarrow K$
for some $K\in\mathcal{K}(C,\omega)$ (see Lemma \ref{compace conv}). In other words, we have
$$\lim _{i \rightarrow \infty} I_{\mu}(\bar{h}_{K_{i}})=I_{\mu}(\bar{h}_{K})=\sup \left\{I_{\mu}(f): f \in C^{+}\left(\omega\right)\right\}.$$

Now let $f:\omega\rightarrow \mathbb{R}$ be continuous, for sufficiently small $|t|$ such that the function $\bar{h}_{K}+tf\in  C^{+}(\omega)$, from Lemma \ref{vari formula}, we get
\begin{equation}
	0=\left.\frac{d}{d t}\right|_{t=0}I_{\mu}(\bar{h}_{K}+tf)= \gamma^{n}(K)\int_{\omega}fd\mu-\int_{\omega}fdS_{\gamma^{n}}(K,\cdot)\int_{\omega}\bar{h}_{K}d\mu.
\end{equation}
Thus,
	\begin{equation*}
 \gamma^{n}(K)\int_{\omega}fd\mu=\int_{\omega}fdS_{\gamma^{n}}(K,\cdot)\int_{\omega}\bar{h}_{K}d\mu
	\end{equation*}
	holds for all continuous $f$ on $\omega$, by Riesz representation theorem, we then get $$\mu=\frac{\int_{\omega}\bar{h}_{K}d\mu}{\gamma^{n}(K)}S_{\gamma^{n}}(K,\cdot),$$
the desired result follows.
\end{proof}

 For a general measure $\mu$ on $\Omega$, if we choose a sequence $\{\omega_{i}\}_{i\in\mathbb{N}}$  of compact subsets of $\Omega$ such that $\omega_{i}\subset \operatorname{int}\omega_{i+1}$, and $\cup_{j\in\mathbb{N}} \omega_{j}=\Omega$. For each $i\in\mathbb{N}$,  define the restricted measure $\mu_{i}=\mu\llcorner\omega_{i}$ by $$\mu\llcorner\omega_{i}(\sigma):=\mu(\sigma\cap\omega_{i})$$ for any Borel subset $\sigma\subset\Omega$. The following uniform estimate of $I_{\mu_{i}}(\cdot)$ will be required.

 \begin{lemma}\label{uniesti}
 	Let $\mu$ be a nonzero, finite Borel measure on $\Omega$, $\mu_{i}=\mu\llcorner\omega_{i}$, there exists a $i_{0}\in\mathbb{N}$ such that $\mu_{i_{0}}\neq0$. Let $$I_{\mu_{i}}(\bar{h}_{K_{i}})=\sup \left\{I_{\mu_{i}}(f)=\gamma^{n}([f])\int_{\omega_{i}} f d\mu_{i}: f \in C^{+}\left(\omega_{i}\right)\right\}$$ for $i\ge i_{0}$. Then there exists a constant $a>0$  such that $I_{\mu_{i}}(\bar{h}_{K_{i}})>a$ for any $i\ge i_{0}$.
 \end{lemma}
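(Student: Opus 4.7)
The plan is to produce a uniform lower bound on $\sup I_{\mu_i}$ by exhibiting a single auxiliary $C$-pseudo-cone whose restricted support function serves as a common competitor for every $i\ge i_0$. First I would fix any $K_0\in\mathcal{K}(C,\omega_{i_0})$ (such an element exists, e.g., as the Wulff shape of any positive constant on $\omega_{i_0}$). By the structural property of $\mathcal{K}(C,\omega_{i_0})$ recorded in Section~\ref{section2}, the absolute support function $\bar{h}_{K_0}$ is continuous and strictly positive on $\Omega$, and it satisfies $\bar{h}_{K_0}\ge c_0$ on the compact set $\omega_{i_0}$ for some constant $c_0>0$. For each $i\ge i_0$ I would then take the common competitor $f_i:=\bar{h}_{K_0}|_{\omega_i}\in C^{+}(\omega_i)$.

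The next step is to estimate the two factors of $I_{\mu_i}(f_i)=\gamma^n([f_i])\int_{\omega_i}f_i\,d\mu_i$ separately and uniformly in $i$. For the Gaussian volume factor, I would use that $[f_i]=C\cap\bigcap_{u\in\omega_i}\{y:\langle y,u\rangle\le h_{K_0}(u)\}$ is cut out by fewer half-spaces than those defining $K_0$; since $K_0\subseteq C$ and $K_0\subseteq H^{-}_{K_0}(u)$ for every $u\in\Omega\supseteq\omega_i$, it follows immediately that $K_0\subseteq[f_i]$ and therefore $\gamma^n([f_i])\ge\gamma^n(K_0)>0$, a bound that does not depend on $i$. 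For the integral factor, the inclusion $\omega_i\supseteq\omega_{i_0}$ together with $\mu_i=\mu\llcorner\omega_i$ and positivity of $\bar{h}_{K_0}$ give $\int_{\omega_i}f_i\,d\mu_i=\int_{\omega_i}\bar{h}_{K_0}\,d\mu\ge\int_{\omega_{i_0}}\bar{h}_{K_0}\,d\mu\ge c_0\,\mu(\omega_{i_0})>0$, where $\mu(\omega_{i_0})>0$ by the very choice of $i_0$.

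Combining these two bounds yields $I_{\mu_i}(\bar{h}_{K_i})=\sup I_{\mu_i}\ge I_{\mu_i}(f_i)\ge\gamma^n(K_0)\,c_0\,\mu(\omega_{i_0})$ for every $i\ge i_0$, so any positive $a$ strictly below this $i$-independent constant works. I do not anticipate a serious obstacle here: the only slightly delicate point is the need for a single competitor valid across all $\omega_i$ simultaneously, and that is resolved by exploiting the elementary monotonicity of the Wulff shape under shrinking of the index set together with the global positivity of the support function of any $C$-pseudo-cone on $\Omega$.
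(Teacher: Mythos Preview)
Your argument is correct and follows essentially the same idea as the paper's proof: fix a $C$-pseudo-cone at level $i_0$ and use (the restriction of) its support function as a competitor at every level $i\ge i_0$, exploiting that $K_0\subseteq C\cap\bigcap_{u\in\omega_i}H^{-}_{K_0}(u)$ so the Gaussian volume factor is bounded below by $\gamma^n(K_0)$, while the integral factor is bounded below because $\omega_{i_0}\subseteq\omega_i$. The paper takes $K_0=K_{i_0}$ (the actual maximizer) and phrases the conclusion as the monotonicity $I_{\mu_i}(\bar h_{K_i})\le I_{\mu_{i+1}}(\bar h_{K_{i+1}})$, which gives the same lower bound $I_{\mu_{i_0}}(\bar h_{K_{i_0}})$; your version with an arbitrary $K_0\in\mathcal K(C,\omega_{i_0})$ is a harmless generalization.

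One small wording issue: you say $[f_i]$ is ``cut out by fewer half-spaces than those defining $K_0$,'' but since $\omega_i\supseteq\omega_{i_0}$ it is cut out by \emph{more}. This does not affect your argument, because the inclusion $K_0\subseteq[f_i]$ that you actually need follows (as you correctly write immediately afterwards) from $K_0\subseteq C$ and $K_0\subseteq H^{-}_{K_0}(u)$ for every $u\in\omega_i$; in fact one gets equality $[f_i]=K_0$, which is precisely the observation $K_{i_0}\in\mathcal K(C,\omega_i)$ the paper records.
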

 \begin{proof}
 	For $i\ge i_{0}$, $K_{i}\in\mathcal{K}(C,\omega_{i})$, we have
 	$$K_{i}=C \cap \bigcap_{u \in \omega_{i}} H_{K_{i}}^{-}(u)\supset C \cap \bigcap_{u \in \omega_{i+1}} H_{K_{i}}^{-}(u)\supset C \cap \bigcap_{u \in \Omega} H_{K_{i}}^{-}(u)=K_{i},$$
 	therefore $K_{i}\in\mathcal{K}(C,\omega_{i+1})$, it follows that $K_{i}=[\bar{h}_{K_{i}}|_{\omega_{i}}]=[\bar{h}_{K_{i}}|_{\omega_{i+1}}]$. Since $\mu_{i}\le\mu_{i+1}\le\mu$, $\omega_{i}\subset \operatorname{int}\omega_{i+1}$, we obtain
 	\begin{align*}
 0<I_{\mu_{i}}(\bar{h}_{K_{i}})=&\gamma^{n}([\bar{h}_{K_{i}}|_{\omega_{i}}])\int_{\omega_{i}}\bar{h}_{K_{i}}  d\mu_{i}\\
 		 &=\gamma^{n}([\bar{h}_{K_{i}}|_{\omega_{i+1}}])\int_{\omega_{i}}\bar{h}_{K_{i}}  d\mu_{i}\\
 		&\le \gamma^{n}([\bar{h}_{K_{i}}|_{\omega_{i+1}}])\int_{\omega_{i+1}}\bar{h}_{K_{i}}  d\mu_{i+1}\\&\le I_{\mu_{i+1}}(\bar{h}_{K_{i+1}}).
 	\end{align*}
 	Therefore, $I_{\mu_{i}}(\bar{h}_{K_{i}})\ge I_{\mu_{i_{0}}}(\bar{h}_{K_{i_{0}}})>0$ for any  $i\ge i_{0}$.

 \end{proof}
We are now in a position to prove  Theorem \ref{cone minkowski} by using the approximation method.
\\
\textit{Proof of Theorem \ref{cone minkowski}.} 	Let $\{\omega_{i}\}_{i\in\mathbb{N}}$  be a sequence of compact subsets of $\Omega$ satisfying
$$\omega_{i}\subset \operatorname{int}\omega_{i+1}\quad \text{and}\quad \cup_{j\in\mathbb{N}} \omega_{j}=\Omega.$$ Let $\mu_{i}=\mu\llcorner\omega_{i}$, where $\mu\llcorner\omega_{i}(\sigma):=\mu(\sigma\cap\omega_{i})$ for any Borel subset $\sigma\subset\Omega$. There exists a $i_{0}$ such that $\mu_{i_{0}}\neq0$. By Lemma \ref{compact cone minkowski}, for each $i\in\mathbb{N}$, $i\ge i_{0}$, there exists a $C$-pseudo-cone $K_{i}\in\mathcal{K}(C,\omega_{i})$ satisfying
$$\frac{\int_{\omega_{i}}\bar{h}_{K_{i}}d\mu_{i}}{\gamma^{n}(K_{i})}S_{\gamma^{n}}(K_{i},\cdot)=\mu_{i}.$$

Let $r_{i}=\min\{r:rB\cap K_{i}\neq\emptyset \}$, then $K_{i}\subset \mathbb{R}^{n}\backslash r_{i}B$.
  If there is a subsequence, still denoted by $K_{i}$, such that $r_{i}\rightarrow\infty$,  	then
\begin{align*} I_{\mu_{i}}(\bar{h}_{K_{i}})
	&\le (1-\gamma^{n}(r_{i}B))\mu_{i}(\omega_{i}) r_{i}\\
	&\le (1-\gamma^{n}(r_{i}B))\mu(\Omega) r_{i}\\
	&=\mathbb{P}(|x|>r_{i})\mu(\Omega) r_{i}\rightarrow 0.
\end{align*}
This contradicts the strictly positive lower bound $	 0<I_{\mu_{i}}(\bar{h}_{K_{i}})\le I_{\mu_{i+1}}(\bar{h}_{K_{i+1}})\le \cdots$ for $i\ge i_{0}$ proved in Lemma \ref{uniesti}.

If $r_{i}\rightarrow0$,
 we also have
\begin{align*} I_{\mu_{i}}(\bar{h}_{K_{i}})\le\mathbb{P}(|x|>r_{i})\mu(\Omega) r_{i}\rightarrow 0,
\end{align*}
a contradiction.

\begin{comment}
	\begin{align*}
	 I_{\mu}(\bar{h}_{K_{i}})=&\gamma^{n}(K_{i})\exp\int_{\omega}\log\bar{h}_{K_{i}}  d\mu\\
	&\le (1-\gamma^{n}(r_{i}B))\exp(\mu(\omega)\log r_{i})\\
	&=\mathbb{P}(|x|>r_{i})\exp(\mu(\omega)\log r_{i})\rightarrow 0,
\end{align*}
this contradicts $0<I_{\mu_{i}}(\bar{h}_{K_{i}})\le	 I_{\mu_{i+1}}(\bar{h}_{K_{i+1}})$ for $i\ge j_{0}$.
\end{comment}
Therefore, there exist two constants $m$ and $M$ such that $0<m<\operatorname{dist}(o,\partial K_{i})<M$. From  Lemma \ref{select}, there exists a subsequence of $K_{i}\in\mathcal{K}(C,\omega_{i})$,  denoted still  as $K_{i}$, such that $K_{i}\rightarrow K$.

 Since $K_{i}$ are $C$-pseudo-cones, then there exists $z\in K_{i}$ for all $i\in\mathbb{N}$ such that $z+C\subset K_{i}$, i.e., $\gamma^{n}(K_{i})\ge\gamma^{n}(z+C)$. The normalization constants $$c_{i}:=\frac{\int_{\omega_{i}}\bar{h}_{K_{i}}d\mu_{i}}{\gamma^{n}(K_{i})}$$ are uniformly bounded, it then follows Lemma \ref{continuity of Gaussian volume} that
$c_{i}\rightarrow c=\frac{\int_{\Omega}\bar{h}_{K}d\mu}{\gamma^{n}(K)}$ as $i\to\infty$.

Fix an index $i$, choose a compact set $\beta\subset\Omega$ such that $\omega_{i}\subset\operatorname{int}\beta$, then it follows Lemma 13 in \cite{schneider weighted},
$$\nu_{K_{j}}^{-1}(\omega_{i})=\nu_{K_{j}^{(\beta)}}^{-1}(\omega_{i}),\quad \nu_{K}^{-1}(\omega_{i})=\nu_{K^{(\beta)}}^{-1}(\omega_{i}),$$
and from Lemma \ref{compace conv}, we have
$$K_{i}^{(\beta)}\rightarrow K^{(\beta)}.$$
Lemma \ref{weak conve} shows that the $S_{\gamma^{n}}(K_{j})\llcorner\omega_{i}$ converges to $S_{\gamma^{n}}(K^{(\beta)})\llcorner\omega_{i}$ weakly. Then we have
$$c_{j}S_{\gamma^{n}}(K_{j},\cdot)\llcorner\omega_{i}\xrightarrow{w}cS_{\gamma^{n}}(K,\cdot)\llcorner\omega_{i}.$$
Since $c_{i}S_{\gamma^{n}}(K_{i},\cdot)\llcorner\omega_{i}=\mu\llcorner\omega_{i}$, it follows that for each Borel set $\sigma\subset\omega_{i}$ we have $c S_{\gamma^{n}}(K,\sigma)=c S_{\gamma^{n}}(K^{(\beta)},\sigma)=\mu_{i}(\sigma)=\mu(\sigma)$.
Since $\cup_{j\in\mathbb{N}} \omega_{j}=\Omega$, we conclude
$cS_{\gamma^{n}}(K,\cdot)=\mu$.
\\

\section{Uniqueness results for solutions }
The Ehrhard inequality was shown by Ehrhard  \cite{ehrhard} and generalized to different versions \cite{Latalaicm}. It implies the Gaussian isoperimetric inequality that among all subsets
of $ \mathbb{R}^{n} $ with prescribed Gaussian measure, half-spaces have the least
Gaussian perimeter. For $C$-pseudo-cones (convex sets),  the Ehrhard inequality with equality condition is posed by Shenfeld-Handel in \cite{shenfeld2018}
as follows.

\begin{theorem}
Let  $K, L$  be two $C$-pseudo-cones in  $\mathbb{R}^{n}$. For  $0<t<1$, we have
\begin{equation}\label{EHR}
\Phi^{-1}\left(\gamma^{n}((1-t) K+t L)\right) \geq(1-t) \Phi^{-1}\left(\gamma^{n}(K)\right)+t \Phi^{-1}\left(\gamma^{n}(L)\right).
\end{equation}
Here,
$$\Phi(x)=\frac{1}{\sqrt{2 \pi}} \int_{-\infty}^{x} e^{-\frac{t^{2}}{2}} d t.$$
Moreover, equality holds if and only if  $K=L$.
\end{theorem}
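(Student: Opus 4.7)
The plan is to reduce the theorem to two external inputs---the classical Ehrhard inequality for convex sets, and the equality characterization of Shenfeld--van Handel---and then to exclude the one nontrivial equality alternative by exploiting the pointedness of $C$.

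For the inequality part, I would observe that every $C$-pseudo-cone is a convex subset of $\mathbb{R}^n$ (though unbounded), and that $(1-t)K + tL$ is again convex. Ehrhard's inequality for convex sets---originally due to Ehrhard for symmetric convex sets and extended by Lata\l{}a to all convex sets---then yields \eqref{EHR} at once, with no additional work required to handle the unboundedness, since $\gamma^n$ is finite. So the content of the theorem really lies in the equality clause.

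For the equality statement, I would invoke the Shenfeld--van Handel equality characterization in \cite{shenfeld2018}. Their theorem asserts that, for two convex sets $K,L$ with $0<\gamma^n(K),\gamma^n(L)<1$, equality in \eqref{EHR} at some $t\in(0,1)$ forces either $K=L$ (up to a $\gamma^n$-null set, which for closed convex sets means literally $K=L$) or $K$ and $L$ to be parallel halfspaces sharing a common outward unit normal. The task is thus reduced to ruling out the halfspace alternative.

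The hard part is excluding the halfspace case, and this is precisely where the standing hypotheses on $C$ enter. The key observation is that a halfspace $H=\{x:\langle x,v\rangle\leq a\}$ has recession cone $\operatorname{rec} H = \{z:\langle z,v\rangle\leq 0\}$, which contains the entire $(n{-}1)$-dimensional linear subspace $v^{\perp}$; in particular $\operatorname{rec} H$ is not pointed. Since by hypothesis $\operatorname{rec} K=\operatorname{rec} L=C$ is pointed and $n$-dimensional, neither $K$ nor $L$ can be a halfspace. Hence the halfspace alternative is vacuous, and equality forces $K=L$.
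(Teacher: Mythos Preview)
Your proposal is correct and aligns with the paper's treatment: the paper does not give its own proof of this theorem but simply cites it as a consequence of Shenfeld--van Handel \cite{shenfeld2018}, noting that $C$-pseudo-cones are convex sets. Your write-up is in fact more complete than the paper's, since you explicitly verify the nondegeneracy condition $0<\gamma^n(K),\gamma^n(L)<1$ implicitly (via $K\subset C$ and $K\supset z+C$) and, more importantly, you spell out why the parallel-halfspace equality alternative from \cite{shenfeld2018} cannot occur: a halfspace has recession cone containing a hyperplane, contradicting the standing assumption that $C$ is pointed. The paper leaves this exclusion implicit.
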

General versions of the Ehrhard inequality were introduced  in \cite{shenfeld2018}. The  log concavity property of   Gaussian measures, as stated below, can be deduced from Ehrhard inequality.

\begin{lemma}\label{log cancave}
 Let  $K, L$  be two $C$-pseudo-cones in  $\mathbb{R}^{n}$. For  $0<t<1$, we have
	\begin{equation}
		\gamma^{n}((1-t) K+t L) \geq \gamma^{n}(K)^{1-t} \gamma^{n}(L)^{t}
	\end{equation}
	with equality if and only if $K=L$.
\end{lemma}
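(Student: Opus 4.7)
The plan is to deduce the log-concavity of the Gaussian measure from the Ehrhard inequality \eqref{EHR} by post-composing with the cumulative distribution function $\Phi$, exploiting the well-known fact that $\Phi$ itself is log-concave on $\mathbb{R}$. Abbreviate $a := \gamma^n(K)$, $b := \gamma^n(L)$, and $c := \gamma^n((1-t)K + tL)$. Since $K$ and $L$ are $C$-pseudo-cones contained in the halfspace $-C^{\circ}$ (so in particular have Gaussian measure strictly between $0$ and $1$), the quantities $\Phi^{-1}(a)$, $\Phi^{-1}(b)$ are well-defined real numbers.

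First I would invoke \eqref{EHR} to write
\begin{equation*}
\Phi^{-1}(c) \;\geq\; (1-t)\,\Phi^{-1}(a) + t\,\Phi^{-1}(b).
\end{equation*}
Since $\Phi$ is strictly increasing, applying $\Phi$ to both sides yields
\begin{equation*}
c \;\geq\; \Phi\bigl((1-t)\Phi^{-1}(a) + t\Phi^{-1}(b)\bigr).
\end{equation*}
Second, I would invoke the log-concavity of $\Phi$, which is a standard fact: a direct computation shows $(\log\Phi)'' = (\Phi\phi' - \phi^2)/\Phi^2 \leq 0$, where $\phi = \Phi'$, because the Mills-type ratio $\phi/\Phi$ is decreasing. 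Applying log-concavity of $\Phi$ to the two points $\Phi^{-1}(a)$ and $\Phi^{-1}(b)$ gives
\begin{equation*}
\Phi\bigl((1-t)\Phi^{-1}(a) + t\Phi^{-1}(b)\bigr) \;\geq\; \Phi(\Phi^{-1}(a))^{1-t}\,\Phi(\Phi^{-1}(b))^{t} \;=\; a^{1-t}b^{t}.
\end{equation*}
Chaining the two displayed inequalities gives $c \geq a^{1-t}b^t$, which is the desired inequality.

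For the equality characterization I would argue as follows. Log-concavity of $\Phi$ is in fact strict: since $(\log\Phi)''(x) < 0$ for every $x \in \mathbb{R}$, we have equality in the second inequality above precisely when $\Phi^{-1}(a) = \Phi^{-1}(b)$, that is, when $a = b$. Thus, if equality holds in the conclusion, then $\gamma^n(K) = \gamma^n(L)$, and moreover equality must also hold in \eqref{EHR}. The equality case of Ehrhard's inequality (as stated above, due to Shenfeld–van Handel) forces $K = L$. The converse direction is immediate by substitution. The main subtlety is the equality analysis, which hinges on the strict log-concavity of $\Phi$ to rule out the degenerate scenario where equality in \eqref{EHR} could be achieved through cancellation without forcing $K = L$; this is handled cleanly by the two-step reduction above.
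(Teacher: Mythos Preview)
Your proof is correct and takes precisely the approach the paper points to but does not spell out: deduce log-concavity from Ehrhard's inequality \eqref{EHR} via the (strict) log-concavity of $\Phi$, and read off the equality case from the Shenfeld--van Handel characterization. One small slip: $C$-pseudo-cones are contained in $C$, not in ``the halfspace $-C^{\circ}$'' (which is a cone, not a halfspace), but this does not affect the argument since the only fact you need there is $0<\gamma^{n}(K),\gamma^{n}(L)<1$.
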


From the variational formula showed  in Lemma \ref{vari formula}, the following Minkowski type inequality for $C$-pseudo-cones is established.
\begin{lemma}\label{Minkowski ine}
	If $K,L\in\mathcal{K}(C,\omega)$ for some nonempty compact $\omega\subset\Omega$, we have
	\begin{equation}\label{Minkowski type ineq pse}
 \frac{1}{\gamma^{n}(K)}\int_{\omega}
\left(\bar{h}_{K}-\bar{h}_{L}\right)dS_{\gamma^{n}}(K,u)\ge \log\gamma^{n}(L)-\log\gamma^{n}(K),
	\end{equation}
	with equality if and only if $K=L$.
\end{lemma}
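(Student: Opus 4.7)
The plan is to build a one-parameter family of Wulff shapes interpolating between $K$ and $L$, apply the log-concavity of Gaussian measure (Lemma \ref{log cancave}) to this family, and then differentiate at $t=0$ via the variational formula in Lemma \ref{vari formula}.

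For $t\in[0,1]$, set $h_t:=(1-t)\bar h_K+t\bar h_L$ on $\omega$ and let $W_t:=[h_t]$ be the associated Wulff shape. Because $K,L\in\mathcal K(C,\omega)$, one has $W_0=K$ and $W_1=L$. Next I observe that $M_t:=(1-t)K+tL$ is itself a $C$-pseudo-cone (recession cone is $(1-t)C+tC=C$, it avoids the origin since $\bar h_K,\bar h_L>0$ on $\omega$, and $\lambda M_t\subset M_t$ for $\lambda\geq 1$), with absolute support function exactly $h_t$ on $\omega$. Since the Wulff shape is maximal among convex subsets of $C$ whose support function is bounded by $h_t$ on $\omega$, we get $M_t\subset W_t$, and therefore
\begin{equation*}
\gamma^n(W_t)\;\geq\;\gamma^n(M_t)\;\geq\;\gamma^n(K)^{1-t}\gamma^n(L)^t,
\end{equation*}
where the second inequality is Lemma \ref{log cancave}.

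Taking logarithms and setting $\phi(t):=\log\gamma^n(W_t)$, the above says $\phi(t)\geq(1-t)\phi(0)+t\phi(1)$ for all $t\in[0,1]$, so
\begin{equation*}
\frac{\phi(t)-\phi(0)}{t}\;\geq\;\phi(1)-\phi(0)\qquad(t\in(0,1]).
\end{equation*}
Now apply Lemma \ref{vari formula} with $f=\bar h_L-\bar h_K$ to compute the right derivative at $0$:
\begin{equation*}
\phi'(0^+)\;=\;\frac{1}{\gamma^n(K)}\lim_{t\to 0^+}\frac{\gamma^n(W_t)-\gamma^n(K)}{t}\;=\;-\frac{1}{\gamma^n(K)}\int_\omega(\bar h_L-\bar h_K)\,dS_{\gamma^n}(K,u).
\end{equation*}
Letting $t\to 0^+$ in the displayed inequality yields
\begin{equation*}
\log\gamma^n(L)-\log\gamma^n(K)\;\leq\;\frac{1}{\gamma^n(K)}\int_\omega(\bar h_K-\bar h_L)\,dS_{\gamma^n}(K,u),
\end{equation*}
which is \eqref{Minkowski type ineq pse}.

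For the equality case, if the Minkowski-type inequality is an equality, then $\phi(1)-\phi(0)=\phi'(0^+)$; combined with $\phi(t)\geq(1-t)\phi(0)+t\phi(1)$ this forces $\phi$ to coincide with its chord on $[0,1]$, so $\gamma^n(W_t)=\gamma^n(K)^{1-t}\gamma^n(L)^t$ for every $t\in[0,1]$. The sandwich $\gamma^n(W_t)\geq\gamma^n(M_t)\geq\gamma^n(K)^{1-t}\gamma^n(L)^t$ then degenerates, and in particular $\gamma^n((1-t)K+tL)=\gamma^n(K)^{1-t}\gamma^n(L)^t$; the equality clause of Lemma \ref{log cancave} (which comes from the Ehrhard inequality \eqref{EHR}) gives $K=L$. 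The main technical point to justify carefully will be the inclusion $M_t\subset W_t$ together with the passage from this geometric inclusion to the log-concavity of $t\mapsto\gamma^n(W_t)$; once that is in hand, the variational identity of Lemma \ref{vari formula} delivers the result in one differentiation.
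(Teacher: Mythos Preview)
Your approach is essentially the same as the paper's: interpolate via Wulff shapes $W_t=[\,(1-t)\bar h_K+t\bar h_L\,]$, observe $W_t\supset M_t:=(1-t)K+tL$, apply Lemma~\ref{log cancave} to $M_t$, and differentiate at $t=0$ via Lemma~\ref{vari formula}. The inequality part is fine.

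There is, however, a gap in your equality argument. You claim that $\phi'(0^+)=\phi(1)-\phi(0)$ together with $\phi(t)\ge(1-t)\phi(0)+t\phi(1)$ forces $\phi$ to coincide with its chord. This is false in general: take for instance $\phi(t)=t^2(1-t)$, which lies above its chord $\equiv 0$ on $[0,1]$ and has $\phi'(0)=0=\phi(1)-\phi(0)$, yet is not linear. The point is that $\phi(t)=\log\gamma^n(W_t)$ is \emph{not} known to be concave; only $\psi(t):=\log\gamma^n(M_t)$ is, by Lemma~\ref{log cancave}.

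The fix (and this is what the paper does) is to work with $\psi$ instead. From the sandwich $\phi(t)\ge\psi(t)\ge(1-t)\phi(0)+t\phi(1)$, with equality at $t=0$ and with the outer two functions having right derivative $\phi(1)-\phi(0)$ at $0$ (the left one by your computation in the equality case, the right one trivially), the squeeze gives $\psi'(0^+)=\psi(1)-\psi(0)$. Now concavity of $\psi$ makes the difference quotients $t\mapsto(\psi(t)-\psi(0))/t$ nonincreasing, so $\psi'(0^+)=\psi(1)-\psi(0)$ forces $\psi$ to be affine on $[0,1]$; hence equality holds in Lemma~\ref{log cancave}, and $K=L$.
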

\begin{proof}
	From the definition of the Wulff shape \eqref{wulff shape}, for  $0<t<1$, we have
	\begin{align*} [\bar{h}_{K}|_{\omega}+t(\bar{h}_{L}|_{\omega}-\bar{h}_{K}|_{\omega})]&=C \cap \bigcap_{u \in \omega}\left\{y \in \mathbb{R}^{n}:\langle y, u\rangle \leq (1-t)h_{K}(u)+th_{L}(u)\right\}\\
		&\supset C \cap \bigcap_{u \in \Omega}\left\{y \in \mathbb{R}^{n}:\langle y, u\rangle \leq (1-t)h_{K}(u)+th_{L}(u)\right\}\\
		&=(1-t)K+tL.
	\end{align*}
	If $K$ is a $C$-pseudo-cone, for any $a>0$, we have $aK\subset C$ and $aK+C=aK+aC=a(K+C)\subset aK$, hence $aK$ is also a $C$-pseudo-cone, then the last equality follows from \cite{sz2024asyp,Wang2024asymptotic}. Together with Lemma \ref{log cancave}, this yields $$\gamma^{n}([\bar{h}_{K}|_{\omega}+t(\bar{h}_{L}|_{\omega}-\bar{h}_{K}|_{\omega})])\geq\gamma^{n}((1-t) K+t L) \geq \gamma^{n}(K)^{1-t} \gamma^{n}(L)^{t}.$$
This formula can be formulated as
	\begin{align*}
	\log\left( \gamma^{n}([\bar{h}_{K}|_{\omega}+t(\bar{h}_{L}|_{\omega}-\bar{h}_{K}|_{\omega})])\right) &\geq\log\left( \gamma^{n}((1-t) K+t L)\right)\\
	&  \geq (1-t)\log \gamma^{n}(K) +t\log \gamma^{n}(L).
	\end{align*}
	Since $K\in\mathcal{K}(C,\omega)$, when $t=0$, then $\gamma^{n}([\bar{h}_{K}|_{\omega}])=\gamma^{n}(K)$. Let
\begin{align*}
f(t)=\log\left( \gamma^{n}([\bar{h}_{K}|_{\omega}+t(\bar{h}_{L}|_{\omega}-\bar{h}_{K}|_{\omega})])\right)   - (1-t)\log \gamma^{n}(K) -t\log \gamma^{n}(L),
\end{align*}
for  $0\le t \le 1$, it follows that  $f(0)=0$ and $f(t)\ge 0$, then the right derivative of $f(t)$ at $t=0$ is nonnegative, that is, $f'_+(0)\ge 0$.
	From Lemma \ref{vari formula}, we get $$-\frac{1}{\gamma^{n}(K)}\int_{\omega}
\left(\bar{h}_{L}-\bar{h}_{K}\right)dS_{\gamma^{n}}(K,u)\ge \log\gamma^{n}(L)-\log\gamma^{n}(K).$$
	If equality holds in \eqref{Minkowski type ineq pse}, we have
	\begin{align}\label{num}
	 	&\lim _{t \rightarrow 0^{+}} \frac{\log\left( \gamma^{n}\left([\bar{h}_{K}|_{\omega}+t(\bar{h}_{L}|_{\omega}-\bar{h}_{K}|_{\omega})]\right)\right) -\log\left( \gamma^{n}(K)\right) }{t}\\  \nonumber
	 	&=\lim _{t \rightarrow 0^{+}}\frac{\log\left( \gamma^{n}((1-t) K+t L)\right)-\log\left( \gamma^{n}(K)\right)}{t}\\  \nonumber
	 	&=\log\gamma^{n}(L)-\log\gamma^{n}(K).
	\end{align}
	Lemma \ref{log cancave} implies the function $g(t)=\log\left(\gamma^{n}((1-t) K+t L)\right) $ is concave in $[0,1]$. If equality holds, $g'(0)=g(1) -g(0)$ in \eqref{num}, then $g(t)$ is a linear function. Equality holds in Lemma \ref{log cancave} implies $K=L$.
\end{proof}

Now we define the Gaussian mixed volume $\gamma_{1}(K,L)$ for $C$-pseudo-cones.

\begin{definition}\label{mixvolume}
	For  $C$-pseudo-cones $ K ,L$,
	the \emph{Gaussian  mixed volume} is defined by
	\begin{equation}
		\gamma_{1}(K,L)= \int_{\Omega} \bar h_{L}(u) d S_{\gamma^{n}}(K, u).
	\end{equation}
\end{definition}

	Then Lemma \ref{Minkowski ine} implies
	\begin{equation}\label{mixed MInkowski}
		 \gamma_{1}(K,K)-\gamma_{1}(K,L)\ge\gamma^{n}(K)\log\frac{\gamma^{n}(L)}{\gamma^{n}(K)}.
	\end{equation}
	
	The uniqueness of the Gaussian Minkowski problem can be established when we restrict to the set of $\mathcal{K}(C,\omega)$.

\begin{theorem}\label{unique}
	Suppose $K,L\in\mathcal{K}(C,\omega)$ for some nonempty compact set $\omega\subset\Omega$, $\gamma^{n}(K)=\gamma^{n}(L)$. For any constant $c>0$, if
	$$cS_{\gamma^{n}}(K,\cdot)=S_{\gamma^{n}}(L,\cdot),$$
	 then $K=L$.
\end{theorem}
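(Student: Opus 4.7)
The plan is to exploit the Minkowski-type inequality \eqref{mixed MInkowski} by applying it symmetrically with the roles of $K$ and $L$ swapped, and then to use the proportionality of the surface area measures to collapse both inequalities into equalities. The equality case of that Minkowski inequality will then force $K=L$.

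First, since $\gamma^n(K)=\gamma^n(L)$, the right-hand side $\gamma^n(K)\log(\gamma^n(L)/\gamma^n(K))$ of \eqref{mixed MInkowski} vanishes. Applying the inequality once to the pair $(K,L)$ and once to $(L,K)$ therefore yields
\begin{equation*}
\gamma_{1}(K,K)\ge \gamma_{1}(K,L),\qquad \gamma_{1}(L,L)\ge \gamma_{1}(L,K).
\end{equation*}
Next, I would rewrite the second of these using the hypothesis $cS_{\gamma^{n}}(K,\cdot)=S_{\gamma^{n}}(L,\cdot)$. Since $K,L\in\mathcal{K}(C,\omega)$, both measures are concentrated on $\omega$, and both $\bar h_{K}$ and $\bar h_{L}$ are continuous and positive there, so Definition \ref{mixvolume} gives
\begin{equation*}
\gamma_{1}(L,K)=\int_{\omega}\bar h_{K}\,dS_{\gamma^{n}}(L,\cdot)=c\int_{\omega}\bar h_{K}\,dS_{\gamma^{n}}(K,\cdot)=c\,\gamma_{1}(K,K),
\end{equation*}
and similarly $\gamma_{1}(L,L)=c\,\gamma_{1}(K,L)$.

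Substituting these two identities into $\gamma_{1}(L,L)\ge \gamma_{1}(L,K)$ produces $c\,\gamma_{1}(K,L)\ge c\,\gamma_{1}(K,K)$, i.e., $\gamma_{1}(K,L)\ge\gamma_{1}(K,K)$. Combined with the first inequality $\gamma_{1}(K,K)\ge \gamma_{1}(K,L)$, this forces the equality $\gamma_{1}(K,K)=\gamma_{1}(K,L)$. Hence equality holds in \eqref{mixed MInkowski} for the pair $(K,L)$, and the equality characterization in Lemma \ref{Minkowski ine} immediately gives $K=L$.

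There is no substantive obstacle beyond verifying the bookkeeping; the argument is a standard ``symmetrize and invoke equality'' maneuver. The only point that requires care is ensuring the mixed volume $\gamma_1(L,K)$ is well-defined, which is automatic because $\bar h_K$ is bounded and continuous on the compact set $\omega$ where $S_{\gamma^{n}}(L,\cdot)$ is concentrated (as $L\in\mathcal{K}(C,\omega)$). A small bonus observation is that the argument implicitly pins down $c=1$ after the fact, since $K=L$ forces $S_{\gamma^n}(K,\cdot)=S_{\gamma^n}(L,\cdot)$.
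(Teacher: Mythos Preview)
Your proof is correct and follows essentially the same approach as the paper: apply the Minkowski-type inequality \eqref{mixed MInkowski} to both ordered pairs $(K,L)$ and $(L,K)$, use the hypothesis $cS_{\gamma^n}(K,\cdot)=S_{\gamma^n}(L,\cdot)$ to relate the mixed volumes, and collapse the resulting chain of inequalities into an equality to invoke the equality case of Lemma~\ref{Minkowski ine}. The only cosmetic difference is the order in which the substitutions and inequalities are combined.
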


  \begin{proof}
  	From Lemma \ref{Minkowski ine} and \eqref{mixed MInkowski}, if
  	$cS_{\gamma^{n}}(K,\cdot)=S_{\gamma^{n}}(L,\cdot)$ and $\gamma^{n}(K)=\gamma^{n}(L)$, we have
  	\begin{align*}
  		\gamma_{1}(L,L)&=\int_{\Omega} \bar h_{L} d S_{\gamma^{n}}(L,u)=c\int_{\Omega} \bar h_{L} d S_{\gamma^{n}}(K,u)\\
  		&=c\gamma_{1}(K,L)\le c\gamma_{1}(K,K).
  	\end{align*}
  	Switching the role of $K$ and $L$, we have
  	\begin{align*}
  		\gamma_{1}(K,K)&=\int_{\Omega} \bar h_{K} d S_{\gamma^{n}}(K,u)=\frac{1}{c}\int_{\Omega} \bar h_{K} d S_{\gamma^{n}}(L,u)\\
  		&=\frac{1}{c}\gamma_{1}(L,K)\le \frac{1}{c}\gamma_{1}(L,L).
  	\end{align*}
  	Combining  two inequalities above we get
  	$$\gamma_{1}(L,L)\le c\gamma_{1}(K,K)\le c\left( \frac{1}{c}\gamma_{1}(L,L)\right) =\gamma_{1}(L,L).$$
  	Hence, from
  	Lemma \ref{Minkowski ine}, the equality conditions in above two inequalities imply $K=L$.
  \end{proof}

  In Lemma \ref{compact cone minkowski}, if there exist $K,L$ such that  $\gamma^{n}(K)=\gamma^{n}(L)$ and $\mu=cS_{\gamma^{n}}(K,\cdot)=c'S_{\gamma^{n}}(L,\cdot)$ for some constants $c,c'>0$,  then Theorem \ref{unique} yields $K=L$.

  However, if we remove the condition $\gamma^{n}(K)=\gamma^{n}(L)$ in Theorem \ref{unique}, then the uniqueness of the solution does not hold because  of
 the distinct decay rate of Gaussian measures.

\begin{theorem}\label{ex1}
	For any pointed, $n$-dimensional closed convex cone $C$ in $\mathbb{R}^{n}$ and any nonempty compact  $\omega\subset\Omega$, 	there exist $K,L\in\mathcal{K}(C,\omega)$,  such that $S_{\gamma^{n}}(K,\cdot)=S_{\gamma^{n}}(L,\cdot)$, but $K\neq L$.
\end{theorem}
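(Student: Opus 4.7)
The plan is to exhibit a one-parameter family of single-facet $C$-pseudo-cones whose Gaussian surface area measures are all scalar multiples of a single Dirac mass on $\omega$, and then to invoke the fact that the corresponding total-mass function is not injective.

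Fix any $u_0 \in \omega$ and, for each $a > 0$, set
\[
K_a := C \cap H^-(u_0, -a) = \{x \in C : \langle x, u_0 \rangle \le -a\}.
\]
Since $u_0 \in \Omega \subset \operatorname{int} C^{\circ}$, a direct check yields that $K_a$ is a $C$-pseudo-cone (with recession cone exactly $C$) and that $K_a = C \cap H^{-}_{K_a}(u_0)$, so $K_a \in \mathcal{K}(C, \{u_0\}) \subset \mathcal{K}(C, \omega)$. The only outward unit normal arising on $\partial_i K_a$ is $u_0$ itself, hence
\[
S_{\gamma^n}(K_a, \cdot) = f(a)\,\delta_{u_0}, \qquad f(a) := \frac{1}{(2\pi)^{n/2}} \int_{C \cap H(u_0, -a)} e^{-|x|^2/2}\,d\mathcal{H}^{n-1}(x),
\]
where $\delta_{u_0}$ denotes the Dirac mass at $u_0 \in \Omega$. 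Thus the theorem reduces to showing that $f$ takes the same positive value at two distinct parameters.

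The substitution $x = az$ with $B := \{z \in C : \langle z, u_0\rangle = -1\}$ transforms $f$ into
\[
f(a) = \frac{a^{n-1}}{(2\pi)^{n/2}} \int_{B} e^{-a^2 |z|^2/2}\,d\mathcal{H}^{n-1}(z).
\]
Since $u_0 \in \operatorname{int} C^{\circ}$ and $C$ is pointed, a standard limit argument shows that $B$ is a bounded cross-section of $C$ and that $|z| \ge 1$ for every $z \in B$. Consequently $f$ is continuous and strictly positive on $(0,\infty)$, with $f(a) \to 0$ as $a \to 0^+$ (the prefactor $a^{n-1}$ vanishes while the integral stays bounded by $\mathcal{H}^{n-1}(B)$) and $f(a) \to 0$ as $a \to \infty$ (exponential decay dominates the polynomial factor, thanks to $|z| \ge 1$). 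Hence $f$ attains its positive maximum at some interior $a^* \in (0,\infty)$, and the intermediate value theorem furnishes $0 < a_1 < a^* < a_2$ with $f(a_1) = f(a_2)$. Taking $K := K_{a_1}$ and $L := K_{a_2}$ produces the required pair.

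The only genuinely non-routine ingredient is the boundedness of $B$ — precisely where the hypothesis $u_0 \in \operatorname{int} C^{\circ}$ is essential, since otherwise a sequence escaping to infinity on $B$ would produce a unit vector $w \in C$ with $\langle w, u_0 \rangle = 0$. Everything else is an elementary change of variables together with the intermediate value theorem.
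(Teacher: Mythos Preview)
Your proof is correct and takes essentially the same approach as the paper: fix a single $u_0\in\omega$, consider the half-space truncations $K_a=C\cap H^-(u_0,-a)\in\mathcal{K}(C,\{u_0\})\subset\mathcal{K}(C,\omega)$, and show that the resulting total Gaussian surface area $f(a)$ is continuous, positive, and tends to zero at both ends of $(0,\infty)$, so the intermediate value theorem yields two distinct parameters with equal measures. The only cosmetic difference is that the paper first rotates so that $u_0=-e_n$ and factors the integrand as $e^{-t^2/2}\int_{tA}e^{-|x|^2/2}\,d\mathcal{H}^{n-1}(x)$, whereas you use the scaling $x=az$ directly and argue via $|z|\ge 1$ on the bounded cross-section $B$; the two computations are equivalent.
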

\begin{proof}
	For any pointed, $n$-dimensional closed convex cone $C$ in $\mathbb{R}^{n}$ and any compact $\omega\subset\Omega$,
 since Gaussian measure is  rotationally invariant, then 	 for each $b\in\omega$ we can choose $b=-e_{n}\in\Omega$. Denote $H(t)=C\cap H(e_{n},t)$  and
	$$A=\{x\in\mathbb{R}^{n-1}:(x,1)\in H(1)\}.$$
	Let $g(t)=\int_{H(t)}e^{-\frac{|y|^{2}}{2}}d\mathcal{H}^{n-1}(y)$ for $t>0$, then $g(t)$ is Gaussian surface area of $H(t)$ (up to a constant).
	Then
	\begin{align*}
		 g(t)&=\int_{H(t)}e^{-\frac{|y|^{2}}{2}}d\mathcal{H}^{n-1}(y)\\
		&=\int_{(x,t)\in H(t)}e^{-\frac{|x|^{2}+t^{2}}{2}}d\mathcal{H}^{n-1}(x)\\
		 &=e^{-\frac{t^{2}}{2}}\int_{tA}e^{-\frac{|x|^{2}}{2}}d\mathcal{H}^{n-1}(x).
	\end{align*}
	 Therefore, we have $g(t)>0$ and $g(t)\rightarrow 0$ as $t\rightarrow 0$ or  $t\rightarrow\infty$.  Then there exist $t_{1},t_{2}>0$, $t_{1}\neq t_{2}$ such that $g(t_{1})=g(t_{2})$.
	
	Let $K=H(t_{1})+C$ and $L=H(t_{2})+C$, then $K$ and $L$ belong to $\mathcal{K}(C,b)$.  We get $$S_{\gamma^{n}}(K,b)=\frac{1}{(\sqrt{2\pi})^{n}}g(t_{1})$$
	and $$S_{\gamma^{n}}(L,b)=\frac{1}{(\sqrt{2\pi})^{n}}g(t_{2}),$$
the measures	$S_{\gamma^{n}}(K,\cdot)$ and $S_{\gamma^{n}}(L,\cdot)$ both have support in $b$. Therefore 	 $S_{\gamma^{n}}(K,\cdot)= S_{\gamma^{n}}(L,\cdot)$.
	
	For any $E\in \mathcal{K}(C,b)$, we have
	$$E=C \cap  H_{E}^{-}(b)\supset C \cap \bigcap_{u \in \omega} H_{E}^{-}(u)\supset C \cap \bigcap_{u \in \Omega} H_{E}^{-}(u)=E,$$
	hence $E=C \cap \bigcap_{u \in \omega} H_{E}^{-}(u)\in \mathcal{K}(C,\omega)$. Therefore, $K,L\in \mathcal{K}(C,\omega)$, the desired result follows.
	
\end{proof}
\begin{comment}
\begin{theorem}
	For any pointed, closed convex cone $C$ in $\mathbb{R}^{n}$ and any compact $\omega\subset\Omega$, 	 there are $K,L\in\mathcal{K}(C,\omega)$,  such that $C_{\gamma^{n}}(K,\cdot)=C_{\gamma^{n}}(L,\cdot)$, but $K\neq L$.
\end{theorem}

\begin{proof}
	As Theorem \ref{ex1}, Let $h(t)=tg(t)$, since
	$$h(t)=tg(t)=ate^{-\frac{t^{2}}{2}}\gamma^{n-1}(tA).$$
	 Then we have $h(t)>0$, if $t\rightarrow 0$, $h(t)\rightarrow 0$. And $t\rightarrow\infty$, $h(t)\rightarrow 0$. Then there exist $t_{1},t_{2}>0$, $t_{1}\neq t_{2}$ such that $h(t_{1})=h(t_{2})$.
	
	Let $K=H(t_{1})+C$ and $L=H(t_{2})+C$. $K,L\in\mathcal{K}(C,b)$, then we have $C_{\gamma^{n}}(K,\cdot)=C_{\gamma^{n}}(L,\cdot)$ and has support in $b$. Note that in this setting we have $\bar{h}_{K}(b)=t_{1}$, and $\bar{h}_{L}(b)=t_{2}$, $K,L\in \mathcal{K}(C,\omega)$. The desired result follows.
	
\end{proof}
\end{comment}

From the above discussion, we can also deduce that the assertion in Problem \ref{pro}  is false for any cone.
\begin{remark}
		Let $C\subset \mathbb{R}^{n}$ be a pointed, $n$-dimensional closed convex cone.	For each $b\in\Omega$,   after applying a suitable rotation, we choose $b=-e_{n}$. Define  $H(t)=C\cap H(b, -t)$ for $t>0$.
	From Theorem \ref{ex1}, we know that $g(t)=\int_{H(t)}e^{-\frac{|y|^{2}}{2}}d\mathcal{H}^{n-1}(y)$ is bounded, that is, $g(t)\le   c$ for some constant $c>0$, it follows that $S_{\gamma^{n}}(K,b)=\frac{1}{(\sqrt{2\pi})^{n}}g(t)
\le\frac{c}{(\sqrt{2\pi})^{n}}$ for $K=H(t)+C$. Therefore, \textbf{for any Borel measure $\mu$ on $\Omega$, if
	$\mu(b)>\frac{c}{(\sqrt{2\pi})^{n}}$,  then there does not exist a $C$-pseudo-cone $K$ such that $S_{\gamma^{n}}(K,\cdot)=\mu$.}
	
	For example, let  $C=\{ (r\cos\theta,r\sin\theta):r\ge 0, \theta\in[\alpha,\beta]\}$, where $0<\alpha<\beta<\pi$, then
	\begin{align*}
	&\frac{1}{(\sqrt{2\pi})^{2}}\int_{C\cap H(b,-t)}e^{-\frac{|y|^{2}}{2}}d\mathcal{H}^{1}(y)\\
	\le& \frac{1}{(\sqrt{2\pi})^{2}} \int_{\partial C}e^{-\frac{|y|^{2}}{2}}d\mathcal{H}^{1}(y)\\
	=&\frac{1}{\sqrt{2\pi}}\cdot \frac{1}{\sqrt{2\pi}} \int_{\partial C}e^{-\frac{|y|^{2}}{2}}d\mathcal{H}^{1}(y)\\
	=& \frac{1}{\sqrt{2\pi}},
	\end{align*}
	  where $$\frac{1}{\sqrt{2\pi}} \int_{\partial C}e^{-\frac{|y|^{2}}{2}}d\mathcal{H}^{1}(y)=\frac{1}{\sqrt{2\pi}} \int_{\mathbb{R}^{1} }e^{-\frac{|y|^{2}}{2}}d\mathcal{H}^{1}(y)=1$$
follows from the rotation invariance in Gaussian space. Hence, if $\mu(b)>\frac{1}{\sqrt{2\pi}}$, there does not exist a $C$-pseudo-cone $K$ such that $S_{\gamma^{n}}(K,\cdot)=\mu$.
\end{remark}

Theorem \ref{ex1} shows that  the assumption $\gamma^{n}(K)=\gamma^{n}(L)$ in Theorem \ref{unique} can't be removed, but the following  special example in $\mathbb{R}^{2}$ implies that it is possible to relax this assumption and still obtain the uniqueness result.
\begin{remark}
	Let
	 $$C=\left\{ (r\cos\theta,r\sin\theta):r\ge 0, \theta\in\left[\frac{\pi}{4},\frac{3\pi}{4}\right]\right\}\subset \mathbb{R}^{2}.$$ For each $v\in\Omega$, denote $C\cap H(-v,t)$ as $H(t)$.  Let $K_{t}=H(t)+C$, $t>0$,  and let the set $K_{t}^{'}$ be the convex hull of $H(t)$ and $-H(t)$, i.e.,
	 $$K_{t}^{'}=\operatorname{conv}\{H(t), -H(t)\}.$$
Then	$K_{t}^{'}$ is a convex body containing the origin in its interior. Specifically, $K_{t}^{'}$ is formed by the union of four congruent triangles. If $K\in \mathcal{K}(C,v)$, $K=K_{t}$, we denote $K_{t}^{'}$ by $K^{'}$.
	  Thus, for $K, L\in \mathcal{K}(C,v)$, we have $$S_{\gamma^{2}}(K,\cdot)=S_{\gamma^{2}}(L,\cdot) \quad\text{if and only if}\quad S_{\gamma^{2}}(K^{'},\cdot)=S_{\gamma^{2}}(L^{'},\cdot).$$
	From \cite{Huang2021}, if $\gamma^{2}(K^{'}), \gamma^{2}(L^{'})\ge \frac{1}{2}$ and $S_{\gamma^{2}}(K^{'},\cdot)=S_{\gamma^{2}}(L^{'},\cdot)$, then $K^{'}=L^{'}$. Since $\gamma^{2}(C)=\frac{1}{4}$ and $$\gamma^{2}(K^{'})=4\gamma^{2}(\operatorname{conv}\{H(t), o\})=4(\gamma^{2}(C)-\gamma^{2}(K)),$$
	the volume constraint  $\gamma^{2}(K^{'}), \gamma^{2}(L^{'})\ge \frac{1}{2}$ is equivalent to
	   $$\gamma^{2}(C\setminus K), \gamma^{2}(C\setminus L)\ge \frac{1}{8}=\frac{1}{2}\gamma^{2}(C).$$
In other words, $\gamma^{2}(K), \gamma^{2}( L)\le \frac{1}{2}\gamma^{2}(C)$. Under this setting, if  $S_{\gamma^{2}}(K^{'},\cdot)=S_{\gamma^{2}}(L^{'},\cdot)$,  we have $K^{'}=L^{'}$,  which implies $K=L$.
	
	In summary, for any $v\in\Omega$,  $K, L\in \mathcal{K}(C,v)$, if $S_{\gamma^{2}}(K,\cdot)=S_{\gamma^{2}}(L,\cdot)$ and  $\gamma^{2}(K), \gamma^{2}( L)\le \frac{1}{2}\gamma^{2}(C)$, then $K=L$.
\end{remark}

\section{Gaussian cone measure }

In this section, we discuss some key properties of the Gaussian cone measure.
The \textit{Gaussian cone measure} $	 C_{\gamma^{n}}(K,\cdot)$ is  defined as follows.
\begin{definition}\label{conedef}
	For every Borel set $\eta\subset\Omega$, the \textit{Gaussian cone  measure} of a C-pseudo-cone $K$ is given by
	\begin{equation}
		C_{\gamma^{n}}(K,\eta):=\frac{1}{(\sqrt{2 \pi})^{n}} \int_{\nu_{K}^{-1}(\eta)}|\langle x,\nu_{K}(x) \rangle| e^{-\frac{|x|^{2}}{2}} d \mathcal{H}^{n-1}(x)=\int_{\eta}\bar{h}_{K}(u)d	 S_{\gamma^{n}}(K,u).
	\end{equation}
\end{definition}
Geometrically,  $C_{\gamma^{n}}(K,\cdot)$  represents the product of the height (via the support function) and the Gaussian surface area. However, due to the inhomogeneity of the Gaussian measure, the $C_{\gamma^{n}}(K,\eta)$ does not equal  the true Gaussian cone volume $\gamma^{n}\left( \bigcup_{x \in \nu_{K}^{-1}(\eta)} [o, x]\right)  $  . As shown below, the Gaussian cone measure is strictly less than the corresponding Gaussian cone volume.

%For completeness, we provide proofs of key properties of the Gaussian surface area measure $S_{\gamma^{n}}(K,\cdot)$, though some results may be found in our previous work \cite{shan2025}.

\begin{lemma}\label{volume ine}
	Let  $K$  be a  $C$-pseudo-cone, for every Borel set $\omega\subset\Omega$ we have
	$$\frac{1}{n}C_{\gamma^{n}}(K,\omega)<\gamma^{n}\left( \bigcup_{x \in \nu_{K}^{-1}(\omega)} [o, x]\right).$$
\end{lemma}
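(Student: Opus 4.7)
The plan is to rewrite both sides as integrals over $\alpha_{K}^{-1}(\omega)\subset\Omega_{C}$ using the radial parametrisation, and then reduce the inequality to a pointwise comparison of two elementary radial integrals.

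First I would compute the left-hand side. By Definition~\ref{conedef}, $C_{\gamma^{n}}(K,\omega)=\int_{\omega}\bar h_{K}(u)\,dS_{\gamma^{n}}(K,u)$. Applying Lemma~\ref{surface trans} with $g=\bar h_{K}$ and using the identity $\bar h_{K}(\alpha_{K}(v))=\varrho_{K}(v)|\langle v,\alpha_{K}(v)\rangle|$ (which follows from $h_{K}(\alpha_{K}(v))=\langle r_{K}(v),\alpha_{K}(v)\rangle$ together with $\langle v,\alpha_{K}(v)\rangle\le 0$ for $v\in\Omega_{C}$, $\alpha_{K}(v)\in\Omega$), the $|\langle v,\alpha_{K}(v)\rangle|$ factors cancel and I obtain the clean formula
\begin{equation*}
\frac{1}{n}C_{\gamma^{n}}(K,\omega)=\frac{1}{n(\sqrt{2\pi})^{n}}\int_{\alpha_{K}^{-1}(\omega)}\varrho_{K}^{n}(v)\,e^{-\varrho_{K}^{2}(v)/2}\,dv.
\end{equation*}

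Next I would compute the right-hand side. Writing each $x\in\nu_{K}^{-1}(\omega)$ as $\varrho_{K}(v)v$ with $v\in\alpha_{K}^{-1}(\omega)$, the set $\bigcup_{x\in\nu_{K}^{-1}(\omega)}[o,x]$ is (up to a null set) $\{rv:v\in\alpha_{K}^{-1}(\omega),\,0\le r\le\varrho_{K}(v)\}$. Polar coordinates in $\mathbb{R}^{n}$ then give
\begin{equation*}
\gamma^{n}\!\left(\bigcup_{x\in\nu_{K}^{-1}(\omega)}[o,x]\right)=\frac{1}{(\sqrt{2\pi})^{n}}\int_{\alpha_{K}^{-1}(\omega)}\int_{0}^{\varrho_{K}(v)}r^{n-1}e^{-r^{2}/2}\,dr\,dv.
\end{equation*}

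Comparing integrands, the whole claim now collapses to the pointwise statement
\begin{equation*}
\frac{\varrho^{n}}{n}\,e^{-\varrho^{2}/2}<\int_{0}^{\varrho}r^{n-1}e^{-r^{2}/2}\,dr\qquad(\varrho>0),
\end{equation*}
which is immediate: for $0<r<\varrho$ one has $e^{-r^{2}/2}>e^{-\varrho^{2}/2}$, so
\begin{equation*}
\int_{0}^{\varrho}r^{n-1}e^{-r^{2}/2}\,dr>e^{-\varrho^{2}/2}\int_{0}^{\varrho}r^{n-1}\,dr=\frac{\varrho^{n}}{n}e^{-\varrho^{2}/2}.
\end{equation*}
Since $\varrho_{K}(v)>0$ for every $v\in\Omega_{C}$ (as $o\notin K$), this strict inequality holds on all of $\alpha_{K}^{-1}(\omega)$, and integrating in $v$ yields the desired strict inequality whenever $\alpha_{K}^{-1}(\omega)$ has positive spherical measure.

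There is no real obstacle in the argument; the only point that requires a little care is the algebraic simplification of $\bar h_{K}(\alpha_{K}(v))/|\langle v,\alpha_{K}(v)\rangle|=\varrho_{K}(v)$, which is what turns the awkward surface integral for $C_{\gamma^{n}}(K,\omega)$ into a clean radial integral of $\varrho_{K}^{n}e^{-\varrho_{K}^{2}/2}$ and thereby matches the volume expression after one layer of integration in $r$. Once this alignment is made, the lemma is purely a consequence of the strict monotonicity of $r\mapsto e^{-r^{2}/2}$.
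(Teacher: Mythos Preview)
Your proof is correct and follows essentially the same route as the paper: both sides are rewritten as radial integrals over $\alpha_{K}^{-1}(\omega)$ via Lemma~\ref{surface trans} (with the same cancellation $\bar h_{K}(\alpha_{K}(v))/|\langle v,\alpha_{K}(v)\rangle|=\varrho_{K}(v)$) and polar coordinates, after which the strict monotonicity of $r\mapsto e^{-r^{2}/2}$ gives the pointwise inequality. Your remark that the strict inequality requires $\alpha_{K}^{-1}(\omega)$ to have positive spherical measure is a valid caveat that the paper's statement does not make explicit.
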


\begin{proof}
	Let $g(u)=\bar{h}_{K}(u)$ in Lemma \ref{surface trans}, then $\bar{h}_{K}(\alpha_{K}(u))= \left|\langle u\varrho_{K}(u),\alpha_{K}(u)\rangle\right|$. From Lemma \ref{surface trans} and Definition \ref{conedef}, we obtain
	\begin{equation}
		C_{\gamma^{n}}(K,\omega)=\int_{\omega}\bar{h}_{K}(u)d	 S_{\gamma^{n}}(K,u)=\frac{1}{(\sqrt{2 \pi})^{n}} \int_{\alpha_{K}^{-1}(\omega)} e^{-\frac{\varrho_{K}(u)^{2}}{2}}\varrho_{K}^{n}(u) d u,
	\end{equation}
	and
	\begin{align*}
		\gamma^{n}\left( \bigcup_{x \in \nu_{K}^{-1}(\omega)} [o, x]\right)&=\frac{1}{(\sqrt{2 \pi})^{n}}\int_{\bigcup_{x \in \nu_{K}^{-1}(\omega)} [o, x]}e^{-\frac{|x|^{2}}{2}}dx\\
		&=\frac{1}{(\sqrt{2 \pi})^{n}}\int_{\alpha_{K}^{-1}(\omega)}\int_{0}^{\varrho_{K}(u)}e^{-\frac{r^{2}}{2}}r^{n-1}drdu\\
		&>\frac{1}{(\sqrt{2 \pi})^{n}}\int_{\alpha_{K}^{-1}(\omega)}e^{-\frac{\varrho_{K}(u)^{2}}{2}}\int_{0}^{\varrho_{K}(u)}r^{n-1}drdu\\
		&=\frac{1}{n}\frac{1}{(\sqrt{2 \pi})^{n}} \int_{\alpha_{K}^{-1}(\omega)} e^{-\frac{\varrho_{K}(u)^{2}}{2}}\varrho_{K}^{n}(u) d u\\
		&=\frac{1}{n}C_{\gamma^{n}}(K,\omega).
	\end{align*}
\end{proof}
Let $\omega=\Omega$ in Lemma \ref{volume ine}, from Definition \ref{mixvolume} of Gaussian mixed volume, we have the following mixed volume inequality, which is analogous to Lemma 3.2 in \cite{shana2024}.

\begin{lemma}
		Let  $K$  be a  $C$-pseudo-cone, then
		$$\frac{1}{n}\gamma_{1}(K,K)<V_{G}(K).$$
\end{lemma}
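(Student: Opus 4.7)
The plan is to derive the inequality by applying Lemma \ref{volume ine} at $\omega = \Omega$ and then showing that the union of segments appearing on the right-hand side is essentially contained in $C\setminus K$. By Definition \ref{mixvolume} and Definition \ref{conedef},
$$\gamma_{1}(K,K)=\int_{\Omega}\bar{h}_{K}(u)\,dS_{\gamma^{n}}(K,u)=C_{\gamma^{n}}(K,\Omega),$$
so taking $\omega=\Omega$ in Lemma \ref{volume ine} already yields the strict bound
$$\tfrac{1}{n}\gamma_{1}(K,K)=\tfrac{1}{n}C_{\gamma^{n}}(K,\Omega)<\gamma^{n}\!\left(\bigcup_{x\in\nu_{K}^{-1}(\Omega)}[o,x]\right).$$

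The remaining step is the inclusion $\bigcup_{x\in\nu_{K}^{-1}(\Omega)}[o,x]\subseteq(C\setminus K)\cup\partial_{i}K$ (up to measure zero, only the right-hand envelope $(C\setminus K)$ matters for $\gamma^{n}$). For this I would argue as follows: each $x\in\nu_{K}^{-1}(\Omega)\subseteq\partial_{i}K$ is the radial image $x=\varrho_{K}(v)v$ with $v=x/|x|\in\Omega_{C}$, because the radial map $r_{K}:\Omega_{C}\to\partial_{i}K$ is a bijection for any $C$-pseudo-cone. For any $t\in[0,1)$ the point $tx=t\varrho_{K}(v)v$ has $t\varrho_{K}(v)<\varrho_{K}(v)$, so by definition of the radial function it does not belong to $K$; it does belong to $C$ since $C$ is a convex cone containing both $o$ and $x$. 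Hence the open segment $(o,x)$ lies in $C\setminus K$, while the closed endpoint $x\in\partial_{i}K$ contributes only to a Lebesgue-negligible set (as a subset of the boundary of a convex set in $\mathbb{R}^{n}$).

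Combining the two steps,
$$\gamma^{n}\!\left(\bigcup_{x\in\nu_{K}^{-1}(\Omega)}[o,x]\right)\le\gamma^{n}(C\setminus K)=V_{G}(K),$$
and chaining with the strict inequality above gives $\frac{1}{n}\gamma_{1}(K,K)<V_{G}(K)$, as desired.

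The only mildly subtle point, and the step I expect to need the most care, is verifying that every $x\in\partial_{i}K$ admits the radial representation $x=\varrho_{K}(x/|x|)(x/|x|)$; this is a standard consequence of the definition of a $C$-pseudo-cone (the ray from $o$ through $v\in\Omega_{C}$ meets $K$ in a closed ray starting at $r_{K}(v)$, so $\partial_{i}K$ coincides with the image of $r_{K}$), but it must be invoked explicitly. Once this geometric picture is in place, the argument is immediate from Lemma \ref{volume ine}.
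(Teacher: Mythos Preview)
Your proof is correct and follows essentially the same route as the paper: apply Lemma \ref{volume ine} with $\omega=\Omega$ and identify the resulting right-hand side with $V_{G}(K)$. The paper is even terser---it treats the identification as immediate, since the polar-coordinate computation inside the proof of Lemma \ref{volume ine} already shows that $\gamma^{n}\bigl(\bigcup_{x\in\nu_{K}^{-1}(\Omega)}[o,x]\bigr)=\frac{1}{(\sqrt{2\pi})^{n}}\int_{\Omega_{C}}\int_{0}^{\varrho_{K}(v)}e^{-r^{2}/2}r^{n-1}\,dr\,dv=V_{G}(K)$ (with equality, not merely $\leq$); your geometric inclusion argument is a valid alternative justification of this step.
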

Next, we require some key properties of the Gaussian cone measure.

\begin{lemma}
	Let  $K$  be a  $C$-pseudo-cone, the measure $C_{\gamma^{n}}(K,\cdot)$ is finite.
\end{lemma}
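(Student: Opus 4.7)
The plan is to reduce the statement to two facts already available in the paper: the integral representation of $C_{\gamma^{n}}(K,\cdot)$ against the Gaussian surface area measure, and the boundedness of the absolute support function on $\Omega$. Concretely, by Definition \ref{conedef} we have
\begin{equation*}
C_{\gamma^{n}}(K,\eta)=\int_{\eta}\bar{h}_{K}(u)\,dS_{\gamma^{n}}(K,u)
\end{equation*}
for every Borel set $\eta\subset\Omega$, so it suffices to bound the total mass $C_{\gamma^{n}}(K,\Omega)$.

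The first ingredient is that the support function $h_{K}:C^{\circ}\to\mathbb{R}$ of any $C$-pseudo-cone is bounded and non-positive (as recorded in Section \ref{section2}); equivalently, $\bar{h}_{K}=-h_{K}$ is a bounded nonnegative function on $\Omega$. Denote $M:=\sup_{u\in\Omega}\bar{h}_{K}(u)<\infty$. The second ingredient is Lemma \ref{finite surface}, which guarantees that $S_{\gamma^{n}}(K,\Omega)<\infty$.

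Combining these, I would estimate
\begin{equation*}
C_{\gamma^{n}}(K,\Omega)=\int_{\Omega}\bar{h}_{K}(u)\,dS_{\gamma^{n}}(K,u)\le M\cdot S_{\gamma^{n}}(K,\Omega)<\infty,
\end{equation*}
which proves finiteness. There is no real obstacle here: the only subtlety worth flagging is to make explicit that $\bar{h}_{K}$ is bounded on all of $\Omega$ (not merely on a compact subset), which follows from the fact that $h_{K}$ is bounded on the whole dual cone for any $C$-pseudo-cone. Thus the proof is essentially a one-line consequence of Lemma \ref{finite surface} and the boundedness of support functions of $C$-pseudo-cones.
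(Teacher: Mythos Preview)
Your proof is correct and matches the paper's own argument essentially verbatim: the paper also deduces finiteness directly from Lemma \ref{finite surface} together with the boundedness of the support function of a $C$-pseudo-cone. There is nothing to add.
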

\begin{proof}
	This follows from the finiteness of the Gaussian surface area measure  $S_{\gamma^{n}}(K,\cdot)$ in Lemma \ref{finite surface} and the boundedness of the support functions of $C$-pseudo-cones.
\end{proof}

\begin{lemma}\label{weal cone conve}
	Let  $\omega \subset \Omega$  be a nonempty compact subset, and let $ K_{j} \in \mathcal{K}(C, \omega)$  for  $j \in   \mathbb{N}$. Then $K_{j} \rightarrow K $ as  $j \rightarrow \infty $ implies the weak convergence  $C_{\gamma^{n}}\left(K_{j}, \cdot\right) \xrightarrow{w} C_{\gamma^{n}}\left(K, \cdot\right)$.
\end{lemma}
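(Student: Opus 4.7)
The plan is to reduce to the already-established weak convergence of the Gaussian surface area measure (Lemma \ref{weak conve}) by exploiting the density relation
\[
dC_{\gamma^n}(K,\cdot) = \bar h_K(u)\, dS_{\gamma^n}(K,\cdot)
\]
together with the uniform convergence $\bar h_{K_j}\to\bar h_K$ on the compact set $\omega$. Since each $K_j\in\mathcal{K}(C,\omega)$ and $\omega$ is compact, both $S_{\gamma^n}(K_j,\cdot)$ and $C_{\gamma^n}(K_j,\cdot)$ are supported in $\omega$, so it suffices to test against continuous functions $g$ on $\omega$.

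The key step is as follows. Given a bounded continuous $g:\Omega\to\mathbb{R}$, write
\[
\int_\Omega g\, dC_{\gamma^n}(K_j,\cdot) - \int_\Omega g\, dC_{\gamma^n}(K,\cdot)
= \int_\omega g\bar h_{K_j}\,dS_{\gamma^n}(K_j,\cdot) - \int_\omega g\bar h_{K}\,dS_{\gamma^n}(K,\cdot),
\]
and split this difference into
\[
\int_\omega g(\bar h_{K_j}-\bar h_K)\,dS_{\gamma^n}(K_j,\cdot) \;+\; \int_\omega g\bar h_K\,dS_{\gamma^n}(K_j,\cdot) - \int_\omega g\bar h_K\,dS_{\gamma^n}(K,\cdot).
\]
The second bracket goes to zero by Lemma \ref{weak conve}, because $g\bar h_K$ is continuous and bounded on $\omega$ (recall $\bar h_K$ is continuous and bounded on the compact set $\omega\subset\Omega$ for $K\in\mathcal{K}(C,\omega)$). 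For the first bracket, I will use that $\|\bar h_{K_j}-\bar h_K\|_{L^\infty(\omega)}\to 0$ (standard consequence of the convergence in Definition \ref{conv def} applied on the compact set $\omega$, together with the uniform bound $\operatorname{dist}(o,K_j)$ away from $0$ and $\infty$ that comes from $K_j\in\mathcal{K}(C,\omega)$ and $K_j\to K$), combined with the fact that the total masses $S_{\gamma^n}(K_j,\omega)$ are uniformly bounded (this is immediate from Lemma \ref{weak conve}, since weak convergence to a finite measure forces the total masses to be bounded, or directly from the proof of Lemma \ref{weak conve} where the radial representation is uniformly bounded on $\Omega_C$).

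I expect the only nonroutine ingredient to be the uniform convergence $\bar h_{K_j}\to\bar h_K$ on $\omega$; once this is in hand, the argument above closes immediately. This uniform convergence follows because on $\mathcal{K}(C,\omega)$ the supports functions are controlled by the slices $K^-(t)$ that appear in Definition \ref{conv def}: choosing $t$ so large that $\omega\subset\Omega$ is ``seen'' by these slices, Hausdorff convergence of $K_j^-(t)\to K^-(t)$ translates into uniform convergence of the corresponding support functions on $\omega$. With uniform convergence of $\bar h_{K_j}$ and uniformly bounded total masses, the first bracket is bounded by $\|g\|_\infty\,\|\bar h_{K_j}-\bar h_K\|_{L^\infty(\omega)}\,S_{\gamma^n}(K_j,\omega)\to 0$, completing the proof.
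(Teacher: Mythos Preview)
Your proposal is correct and follows exactly the paper's approach: the paper's proof is the one-liner ``This follows from Lemma \ref{weak conve} and the fact $\bar{h}_{K_{j}}\to\bar{h}_{K}$ uniformly,'' and you have simply (and correctly) unpacked this into the standard splitting argument using the density relation $dC_{\gamma^n}(K,\cdot)=\bar h_K\,dS_{\gamma^n}(K,\cdot)$.
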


\begin{proof}
	This follows from Lemma \ref{weak conve} and  the fact $\bar{h}_{K_{j}}\to\bar{h}_{K}$ uniformly.
\end{proof}

The following  Lemma provided in \cite{schneider weighted} will be needed.
\begin{lemma}\label{rho derilog}
	Let $ \omega \subset \Omega$  be a nonempty  compact set, and  $K \in \mathcal{K}(C, \omega)$. Consider a continuous function  $f: \omega \rightarrow \mathbb{R}$.   There exists a  sufficiently small  constant  $\delta>0$  such that the function  $h_{t}$  defined by
	\begin{equation}\label{logminkowskisum}
		\log h_{t}(u)=\log \bar{h}_{K}(u)+t f(u)+o(t, u), \quad u \in \omega
	\end{equation}
	for each $|t| \leq \delta$, where the function  $o(t, \cdot): \omega \rightarrow \mathbb{R}$  is continuous and   $\lim _{t \rightarrow 0} o(t, \cdot) / t=0$  uniformly on  $\omega$. Let  $\left[h_{t}\right]$  be the  Wulff shape associated with  $\left(C, \omega, h_{t}\right)$, we call $\left[h_{t}\right]$ \textit{a logarithmic
		family of Wulff shapes}.
	
	For almost all  $v \in \Omega_{C}$, we have
	\begin{equation}\label{log rho}
		\left.\frac{\mathrm{d} \varrho_{\left[h_{t}\right]}(v)}{\mathrm{d} t}\right|_{t=0}=\lim _{t \rightarrow 0} \frac{\varrho_{\left[h_{t}\right]}(v)-\varrho_{K}(v)}{t}=f\left(\alpha_{K}(v)\right)\varrho_{K}(v).
	\end{equation}
\end{lemma}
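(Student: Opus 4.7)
The plan is to adapt the argument used for Lemma \ref{rho deri} (the linear-perturbation variational formula) to the multiplicative setting $h_t(u) = \bar h_K(u) \exp(tf(u) + o(t,u))$. The starting point is the elementary identity
\begin{equation*}
\varrho_{[h_t]}(v) = \sup_{u \in \omega} \frac{h_t(u)}{|\langle v, u\rangle|}, \qquad v \in \Omega_C,
\end{equation*}
which follows from the definition \eqref{wulff shape} of the Wulff shape together with the fact that $\langle v, u\rangle < 0$ for $v \in \operatorname{int} C$ and $u \in \operatorname{int} C^{\circ}$. Rewriting this as $\varrho_{[h_t]}(v) = \sup_{u \in \omega} \bigl[\bar h_K(u)/|\langle v, u\rangle|\bigr]\, \exp(tf(u)+o(t,u))$ turns the problem into an envelope analysis of a one-parameter family of suprema.

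Next I would restrict attention to those $v \in \Omega_C$ for which $\partial K$ is smooth at $\varrho_K(v)v$; this is a full-measure subset by standard facts on spherical images of convex sets. At such a $v$ the outer normal $\alpha_K(v) \in \omega$ is unique and the identity $\bar h_K(\alpha_K(v)) = |\langle \varrho_K(v)v,\alpha_K(v)\rangle|$ shows that $u = \alpha_K(v)$ is the \emph{unique} maximizer on $\omega$ of $u \mapsto \bar h_K(u)/|\langle v, u\rangle|$ when $t = 0$, with value $\varrho_K(v)$. For the lower bound on the differential quotient I would plug $u=\alpha_K(v)$ into the supremum to obtain
\begin{equation*}
\varrho_{[h_t]}(v) \;\geq\; \varrho_K(v)\, e^{\,tf(\alpha_K(v)) + o(t,\alpha_K(v))},
\end{equation*}
divide by $t>0$, and use the hypothesis $o(t,\cdot)/t \to 0$ uniformly on $\omega$ to conclude $\liminf_{t\to 0^+} t^{-1}(\varrho_{[h_t]}(v)-\varrho_K(v)) \geq f(\alpha_K(v))\varrho_K(v)$; the matching inequality for $t\to 0^-$ is obtained symmetrically.

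For the matching upper bound I would pick a maximizer $u_t^\ast \in \omega$ of $u \mapsto h_t(u)/|\langle v,u\rangle|$ (which exists by compactness of $\omega$ and continuity) and exploit $\varrho_K(v) \geq \bar h_K(u_t^\ast)/|\langle v, u_t^\ast\rangle|$ to get
\begin{equation*}
\varrho_{[h_t]}(v) - \varrho_K(v) \;\leq\; \frac{\bar h_K(u_t^\ast)}{|\langle v, u_t^\ast\rangle|}\bigl(e^{\,tf(u_t^\ast)+o(t,u_t^\ast)}-1\bigr).
\end{equation*}
The crucial step is then to show $u_t^\ast \to \alpha_K(v)$ as $t \to 0$: any subsequential cluster point $u^\ast \in \omega$ must be a maximizer at $t=0$ (using uniform smallness of $o(t,\cdot)/t$ and continuity of $\bar h_K, f$), and uniqueness at a smooth $v$ forces $u^\ast = \alpha_K(v)$. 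Passing to the limit then yields $\limsup \leq f(\alpha_K(v))\varrho_K(v)$, and combining with the lower bound gives the stated derivative.

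The main obstacle is precisely the convergence $u_t^\ast \to \alpha_K(v)$, which is what restricts the conclusion to \emph{almost all} $v$ rather than every $v$; it ultimately rests on the almost-everywhere uniqueness of the outer normal on $\partial K$, a fact already used (implicitly) throughout the pseudo-cone theory. Everything else reduces to uniform continuity on the compact set $\omega$ and elementary estimates on $(e^x-1)/x$. As a byproduct of the same envelope argument one obtains a uniform bound $|\varrho_{[h_t]}(v)-\varrho_K(v)| \leq M|t|$ analogous to Lemma \ref{rho deri}(b), which will be needed whenever this lemma is fed into a dominated-convergence step (for instance, in deriving the variational formula for the Gaussian covolume or cone measure).
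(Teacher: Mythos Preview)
The paper does not actually prove this lemma: it is stated as a known result taken from \cite{schneider weighted} (and ultimately traces back to the Wulff-shape variational lemma in \cite{HLYZdual}), with no argument given in the text. Your proposal is a correct and self-contained reconstruction of precisely the standard envelope argument one finds in those references: write $\varrho_{[h_t]}(v)=\sup_{u\in\omega} h_t(u)/|\langle v,u\rangle|$, get the lower bound by evaluating at $u=\alpha_K(v)$, get the upper bound by tracking a maximizer $u_t^\ast$ and using compactness of $\omega$ plus uniqueness of the normal at regular boundary points to force $u_t^\ast\to\alpha_K(v)$. Nothing is missing; the restriction to almost every $v$ is exactly accounted for by the regular-point hypothesis, and your observation that the same estimates yield a uniform Lipschitz bound $|\varrho_{[h_t]}(v)-\varrho_K(v)|\le M|t|$ (the analogue of Lemma~\ref{rho deri}(b)) is correct and indeed implicitly used downstream in Lemma~\ref{covarilog}.
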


With the above preparations, we now establish the variational formula for the Gaussian covolume with respect to a logarithmic family of Wulff shapes.

\begin{lemma}\label{covarilog}
	Let  $K \in \mathcal{K}(C, \omega)$ for some nonempty, compact set  $\omega \subset \Omega$.   Consider a continuous function   $f: \omega \rightarrow \mathbb{R}$,  and let $\left[h_{t}\right]$  be the  Wulff shapes associated with  $\left(C, \omega, h_{t}\right) $, where
	$$\log h_{t}(u)=\log \bar{h}_{K}(u)+t f(u)+o(t, u).$$ Then
	\begin{equation*}
		\lim _{t \rightarrow 0} \frac{V_{G}\left(\left[h_{t}\right]\right)-V_{G}(K)}{t}=\int_{\omega} f(u) dC_{\gamma^{n}}(K,  u).
	\end{equation*}
	
\end{lemma}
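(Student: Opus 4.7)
The plan is to mirror the proof of Lemma \ref{covari} almost step by step, substituting the logarithmic radial variation (Lemma \ref{rho derilog}) for the linear one (Lemma \ref{rho deri}), and adjusting the final identification of the limit so that the extra factor $\bar{h}_{K}(\alpha_{K}(v))$ appears, converting the Gaussian surface area measure into the Gaussian cone measure via Definition \ref{conedef}.

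First I would rewrite the Gaussian covolume in radial coordinates:
\[
V_{G}([h_{t}])=\frac{1}{(\sqrt{2\pi})^{n}}\int_{\Omega_{C}}F_{t}(v)\,dv,\qquad F_{t}(v)=\int_{0}^{\varrho_{[h_{t}]}(v)}e^{-\frac{r^{2}}{2}}r^{n-1}\,dr,
\]
and split the difference quotient exactly as in the proof of Lemma \ref{covari}:
\[
\frac{F_{t}(v)-F_{0}(v)}{t}=\frac{\varrho_{[h_{t}]}(v)-\varrho_{K}(v)}{t}\cdot\frac{1}{\varrho_{[h_{t}]}(v)-\varrho_{K}(v)}\int_{\varrho_{K}(v)}^{\varrho_{[h_{t}]}(v)}e^{-\frac{r^{2}}{2}}r^{n-1}\,dr.
\]
By Lemma \ref{rho derilog} the first factor tends to $f(\alpha_{K}(v))\,\varrho_{K}(v)$ for almost every $v\in\Omega_{C}$, while the second factor is a mean value that tends to $e^{-\varrho_{K}^{2}(v)/2}\varrho_{K}^{n-1}(v)$ by continuity. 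Hence the pointwise limit is $e^{-\varrho_{K}^{2}(v)/2}f(\alpha_{K}(v))\varrho_{K}^{n}(v)$.

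Next I would justify passing the limit under the integral. Because $\log h_{t}=\log\bar{h}_{K}+tf+o(t,\cdot)$ with $o(t,\cdot)/t\to 0$ uniformly on the compact set $\omega$ and $f$ is bounded there, the functions $h_{t}$ are pinched between two positive multiples of $\bar{h}_{K}$ for $|t|\le\delta$; this gives a uniform two-sided bound $c_{1}\varrho_{K}(v)\le\varrho_{[h_{t}]}(v)\le c_{2}\varrho_{K}(v)$, and hence a bound of the form $|F_{t}(v)-F_{0}(v)|\le M|t|\,\varrho_{K}^{n-1}(v)e^{-c\varrho_{K}^{2}(v)}$ with an integrable majorant on $\Omega_{C}$ (recall $V_{G}(K)<\infty$). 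The dominated convergence theorem then yields
\[
\lim_{t\to 0}\frac{V_{G}([h_{t}])-V_{G}(K)}{t}=\frac{1}{(\sqrt{2\pi})^{n}}\int_{\Omega_{C}}e^{-\frac{\varrho_{K}^{2}(v)}{2}}f(\alpha_{K}(v))\varrho_{K}^{n}(v)\,dv.
\]

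Finally I would convert this radial integral back to an integral on $\partial_{i}K$ by applying Lemma \ref{integral trans} with $F(y)=f(\nu_{K}(y))\,\bar{h}_{K}(\nu_{K}(y))\,e^{-|y|^{2}/2}$: the factor $\bar{h}_{K}(\alpha_{K}(v))$ in $F(r_{K}(v))$ cancels the $\bar{h}_{K}(\alpha_{K}(v))$ in the denominator of the transform, producing precisely $e^{-\varrho_{K}^{2}/2}f(\alpha_{K})\varrho_{K}^{n}$. This gives
\[
\lim_{t\to 0}\frac{V_{G}([h_{t}])-V_{G}(K)}{t}=\frac{1}{(\sqrt{2\pi})^{n}}\int_{\partial_{i}K}f(\nu_{K}(y))\bar{h}_{K}(\nu_{K}(y))e^{-\frac{|y|^{2}}{2}}d\mathcal{H}^{n-1}(y),
\]
and since $S_{\gamma^{n}}(K,\cdot)$ is concentrated on $\omega$ for $K\in\mathcal{K}(C,\omega)$, Lemma \ref{surface trans} and Definition \ref{conedef} identify the right-hand side with $\int_{\omega}f(u)\,dC_{\gamma^{n}}(K,u)$, as required.

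The main obstacle I expect is producing the integrable majorant needed for dominated convergence: one must check that the multiplicative perturbation induced by the logarithmic Wulff family keeps $\varrho_{[h_{t}]}$ uniformly between two fixed positive multiples of $\varrho_{K}$ on all of $\Omega_{C}$ (not merely on $\alpha_{K}^{-1}(\omega)$), so that the Gaussian factor $e^{-\varrho_{[h_{t}]}^{2}/2}$ controls the tail of $\Omega_{C}$ uniformly in $t$. Everything else is a routine transcription of the proof of Lemma \ref{covari}.
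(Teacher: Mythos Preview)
Your proof is correct and follows essentially the same path as the paper's, which likewise writes $V_{G}$ in radial coordinates, splits the difference quotient, invokes Lemma~\ref{rho derilog} for the first factor, applies dominated convergence, and then identifies the limit via Lemma~\ref{surface trans} with $g=f\bar h_{K}$ (equivalent to your direct use of Lemma~\ref{integral trans}). Your worry about the integrable majorant is largely moot here: since $K\in\mathcal{K}(C,\omega)$, the set $C\setminus K$ is bounded, so $\varrho_{K}$ is bounded on $\Omega_{C}$ and a constant majorant on the finite-measure domain $\Omega_{C}$ suffices, exactly as the paper (tersely) asserts.
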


\begin{proof}
	From (\ref{coco}), we have
	$$V_{G}\left(\left[h_{t}\right]\right)=\frac{1}{(\sqrt{2 \pi})^{n}}\int_{\Omega_{C}} \int_{0}^{\varrho_{\left[h_{t}\right]}(v)} e^{-\frac{r^{2}}{2}} r^{n-1} dr d v=\frac{1}{(\sqrt{2 \pi})^{n}}\int_{\Omega_{C}} F_{t}(v)dv,$$
	where $F_{t}(v)=\int_{0}^{\varrho_{\left[h_{t}\right]}(v)} e^{-\frac{r^{2}}{2}} r^{n-1} dr$.
	Then we get
	\begin{align*}
		\frac{F_{t}(v)-F_{0}(v)}{t} & =\frac{1}{t} \int_{\varrho_{K}(v)}^{\varrho_{\left[h_{t}\right]}(v)} e^{-\frac{r^{2}}{2}} r^{n-1} d r \\
		& =\frac{\varrho_{\left[h_{t}\right]}(v)-\varrho_{K}(v)}{t} \cdot \frac{1}{\varrho_{\left[h_{t}\right]}(v)-\varrho_{K}(v)} \int_{\varrho_{K}(v)}^{\varrho_{\left[h_{t}\right]}(v)} e^{-\frac{r^{2}}{2}} r^{n-1} d r.
	\end{align*}
As $t\to 0$,   the first term converges to $f\left(\alpha_{K}(v)\right) \varrho_{K}(v) $ by Lemma \ref{rho derilog}, while the second term converges to $e^{-\frac{\varrho_{K}^{2}}{2}} \varrho_{K}^{n-1}$. Combining these results we obtain
	\begin{equation*}
		\lim _{t \rightarrow 0} \frac{F_{t}(v)-F_{0}(v)}{t}=e^{-\frac{\varrho_{K}^{2}}{2}} f\left(\alpha_{K}(v)\right) \varrho_{K}^{n}(v)
	\end{equation*}
	for almost all $v \in \Omega_{C}$. Furthermore, for sufficiently small $|t|$, there exists a constant $M$ such that
	$$\left|\frac{F_{t}(v)-F_{0}(v)}{t}\right|\le M.$$
	By the dominated convergence theorem, along with the $g(u)=f(u)\bar{h}_{K}(u)$ in Lemma \ref{surface trans}, and the fact that  $C_{\gamma^{n}}(K,  \cdot)$ is concentrated on $\omega$ for $K\in\mathcal{K}(C,\omega)$,  we conclude the desired result
	\begin{align*}
		\lim _{t \rightarrow 0} \frac{V_{G}\left(\left[h_{t}\right]\right)-V_{G}(K)}{t}&=\frac{1}{(\sqrt{2 \pi})^{n}}\int_{\Omega_{C}}	\lim _{t \rightarrow 0} \frac{F_{t}(v)-F_{0}(v)}{t} dv\\
		&=\frac{1}{(\sqrt{2 \pi})^{n}}\int_{\Omega_{C}}e^{-\frac{\varrho_{K}^{2}}{2}} f\left(\alpha_{K}(v)\right) \varrho_{K}^{n}(v) dv\\
		&=\int_{\Omega}f(u)\bar{h}_{K}(u)dS_{\gamma^{n}}(K,  u)\\
		&=\int_{\Omega}f(u)dC_{\gamma^{n}}(K,  u)\\
		&=\int_{\omega}f(u)dC_{\gamma^{n}}(K,  u).
	\end{align*}
\end{proof}
Similar as Theorem \ref{vari formula}, the following lemma states that the  Gaussian cone measure of $C$-pseudo-cones can be generated by the variation of Gaussian volume   with respect to a logarithmic
family of Wulff shapes in \eqref{logminkowskisum}.
\begin{lemma}\label{vari formulalog}
	Let  $K \in \mathcal{K}(C, \omega)$ for some nonempty, compact set  $\omega \subset \Omega$.   Consider a continuous function  $f: \omega \rightarrow \mathbb{R}$,   and let $\left[h_{t}\right]$   be the  Wulff shapes associated with  $\left(C, \omega, h_{t}\right) $, where
	$$\log h_{t}(u)=\log \bar{h}_{K}(u)+t f(u)+o(t, u).$$ Then
	\begin{equation}
		\lim _{t \rightarrow 0} \frac{\gamma
			 ^{n}\left(\left[h_{t}\right]\right)-\gamma^{n}(K)}{t}=-\int_{\omega} f(u) dC_{\gamma^{n}}(K,  u).
	\end{equation}
	
\end{lemma}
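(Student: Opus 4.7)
The plan is to mirror exactly the strategy used in Lemma \ref{vari formula}, since the only difference between that result and the present one lies in whether the perturbation is additive (as in Lemma \ref{vari formula}) or logarithmic (as here); the reduction from $\gamma^n$ to $V_G$ works identically in both settings. First I would invoke the basic identity
$$\gamma^n(L) + V_G(L) = \gamma^n(C)$$
which holds for every $C$-pseudo-cone $L$ by the definition \eqref{coco} of $V_G$ and the fact that $L \cup (C \setminus L) = C$ up to a null set. Applying this to both $[h_t]$ and $K$ gives
$$\gamma^n([h_t]) - \gamma^n(K) = -\bigl(V_G([h_t]) - V_G(K)\bigr),$$
so the problem reduces to a single application of the covolume variational formula.

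Next I would divide by $t$ and take $t \to 0$, at which point Lemma \ref{covarilog} applies directly to the logarithmic family of Wulff shapes $[h_t]$ with the same continuous function $f$ and the same error term $o(t, u)$, producing
$$\lim_{t \to 0} \frac{V_G([h_t]) - V_G(K)}{t} = \int_\omega f(u)\, dC_{\gamma^n}(K, u).$$
Combining this with the sign-flip from the first display yields the claimed formula.

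Since all the analytical work, namely the pointwise derivative of $\varrho_{[h_t]}$ from Lemma \ref{rho derilog}, the dominated-convergence justification, and the transfer from an integral over $\Omega_C$ to one over $\Omega$ against $C_{\gamma^n}(K,\cdot)$, has already been handled inside Lemma \ref{covarilog}, there is no genuine obstacle to surmount; the proof is a two-line bookkeeping argument. The only subtlety worth flagging is ensuring that $[h_t]$ remains a well-defined $C$-pseudo-cone for small $|t|$, which follows because $\bar h_K$ is bounded away from $0$ on the compact set $\omega$ and the perturbation $tf + o(t,\cdot)$ is uniformly small, so that $h_t > 0$ on $\omega$ for $|t|$ sufficiently small; this is exactly the regime in which Lemma \ref{rho derilog} and hence Lemma \ref{covarilog} are applicable.
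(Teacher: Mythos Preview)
Your proposal is correct and mirrors exactly the approach implicit in the paper: the paper does not spell out a proof of Lemma~\ref{vari formulalog} but simply prefaces it with ``Similar as Theorem~\ref{vari formula}'', indicating that one should repeat the two-line reduction $\gamma^n([h_t])-\gamma^n(K)=-(V_G([h_t])-V_G(K))$ and then invoke Lemma~\ref{covarilog} in place of Lemma~\ref{covari}. Your write-up carries this out correctly, including the remark that $h_t>0$ on $\omega$ for small $|t|$.
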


\section{Gaussian log-Minkowski problem for $C$-pseudo-cones}

Let $\mu$ be a nonzero finite Borel measure on a nonempty compact set $\omega\subset\Omega$.
We define the functional $L_{\mu}:  C^{+}(\omega)\rightarrow (0,\infty)$ by
\begin{equation}\label{varfun def}
	L_{\mu}(f):=\gamma^{n}([f])\exp\int_{\omega}\log f d\mu.
\end{equation}
 We consider the following optimization problem
$$\sup \{L_{\mu}(f):f\in C^{+}(\omega)\}.$$
Note that for any $K\in \mathcal{K}(C,\omega)$,  it follows that $L_{\mu}(\bar{h}_{K})=\gamma^{n}(K)\exp\int_{\omega}\log \bar{h}_{K} d\mu>0$.

\begin{lemma}\label{compact cone minkowskilog}
	Let $\mu$ be a nonzero finite Borel measure on $\omega$ with some nonempty compact set $\omega \subset \Omega$. Then there exists a $C$-pseudo-cone $K\in\mathcal{K}(C,\omega)$ such that
	$$\frac{C_{\gamma^{n}}(K,\cdot)}{\gamma^{n}(K)}=\mu.$$
\end{lemma}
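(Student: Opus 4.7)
The plan is to solve this by the variational method using the functional $L_\mu$ defined in \eqref{varfun def}, in close analogy with the approach used for Lemma \ref{compact cone minkowski} but now relying on the logarithmic family of Wulff shapes from Lemma \ref{rho derilog} and the cone-measure variational formula from Lemma \ref{vari formulalog}. The advantage of this log-variational functional is that no normalization constant will appear: the factor $\exp \int_{\omega}\log f \, d\mu$ and the measure $C_{\gamma^n}$ are precisely matched so that $\gamma^n(K)$ emerges naturally from the Euler-Lagrange equation.

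First, I would observe, exactly as in Lemma \ref{compact cone minkowski}, that for every $f\in C^{+}(\omega)$ one has $\bar{h}_{[f]}\geq f$, so
\[
L_{\mu}(f)=\gamma^{n}([f])\exp\!\int_{\omega}\log f\,d\mu\;\leq\;\gamma^{n}([f])\exp\!\int_{\omega}\log\bar{h}_{[f]}\,d\mu =L_{\mu}(\bar{h}_{[f]}),
\]
which reduces the supremum of $L_\mu$ on $C^{+}(\omega)$ to the supremum over $\{\bar{h}_{K}:K\in\mathcal{K}(C,\omega)\}$. Since $L_{\mu}(\bar h_K)>0$ for any $K\in\mathcal K(C,\omega)$, the supremum is strictly positive. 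I would then pick a maximizing sequence $K_{i}\in\mathcal{K}(C,\omega)$ with $L_{\mu}(\bar{h}_{K_{i}})\to\sup L_{\mu}$.

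The main technical step, and the principal obstacle, is to show that $r_i:=\min\{r:rB\cap K_i\neq\emptyset\}$ is bounded away from both $0$ and $\infty$, so that Lemma \ref{select} applies. The key inequality is $\bar{h}_{K_i}(u)\leq r_i$ (achieved by testing the support function against the closest point), and $\gamma^n(K_i)\leq\mathbb{P}(|X|>r_i)$. Hence
\[
L_{\mu}(\bar{h}_{K_{i}})\;\leq\;\mathbb{P}(|X|>r_{i})\,r_{i}^{\mu(\omega)}.
\]
If $r_i\to\infty$, Gaussian decay drives this to $0$; if $r_i\to 0$, the factor $r_i^{\mu(\omega)}\to 0$ (using $\mu(\omega)>0$). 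Both contradict the positive lower bound $\sup L_\mu>0$. By Lemma \ref{select} and Lemma \ref{compace conv}, a subsequence $K_i\to K\in\mathcal K(C,\omega)$, and continuity of $\gamma^n$ (Lemma \ref{continuity of Gaussian volume}) together with uniform convergence $\bar h_{K_i}\to \bar h_K$ on $\omega$ yields $L_\mu(\bar h_K)=\sup L_\mu$.

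Finally, with $K$ a maximizer, I would test against an arbitrary continuous $f:\omega\to\mathbb{R}$ by setting $\log h_{t}=\log\bar{h}_{K}+tf$ for $|t|\leq\delta$, which defines a logarithmic family of Wulff shapes in the sense of Lemma \ref{rho derilog}. Applying Lemma \ref{vari formulalog} and differentiating $L_\mu(h_t)$ at $t=0$,
\[
0=\left.\tfrac{d}{dt}L_{\mu}(h_{t})\right|_{t=0} =\exp\!\int_{\omega}\!\log\bar{h}_{K}\,d\mu\left[-\!\int_{\omega}\!f\,dC_{\gamma^{n}}(K,\cdot)+\gamma^{n}(K)\!\int_{\omega}\!f\,d\mu\right].
\]
Since the exponential prefactor is positive, dividing out gives $\int_{\omega}f\,dC_{\gamma^{n}}(K,\cdot)=\gamma^{n}(K)\int_{\omega}f\,d\mu$ for every continuous $f$. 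By the Riesz representation theorem this forces $C_{\gamma^{n}}(K,\cdot)/\gamma^{n}(K)=\mu$, completing the proof.
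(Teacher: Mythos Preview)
Your proposal is correct and follows essentially the same route as the paper: reduce the supremum to support functions via $\bar h_{[f]}\ge f$, bound $r_i$ away from $0$ and $\infty$ by the estimate $L_\mu(\bar h_{K_i})\le\mathbb{P}(|X|>r_i)\,r_i^{\mu(\omega)}$, extract a convergent subsequence, and differentiate along the logarithmic family $h_t=\bar h_K e^{tf}$ using Lemma \ref{vari formulalog} to obtain the Euler--Lagrange identity. The only addition in your write-up is an explicit justification that $L_\mu(\bar h_K)=\sup L_\mu$ via continuity of $\gamma^n$ and uniform convergence of support functions, which the paper leaves implicit.
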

\begin{proof}
	
	For any $f\in C^{+}(\omega)$,  the Wulff shape satisfies $\bar{h}_{[f]}\ge f$. Consequently,
	\begin{equation}
		L_{\mu}(f)=\gamma^{n}([f])\exp\int_{\omega}\log f d\mu\le
		\gamma^{n}([f])\exp\int_{\omega}\log\bar{h}_{[f]}  d\mu=L_{\mu}(\bar{h}_{[f]}).
	\end{equation}
	This inequality implies that the supremum of $L_{\mu}(f)$ is achieved by support functions of $K(C,\omega)$.
	Let $\bar{h}_{K_{i}}$ be a maximizing sequence for $L_{\mu}$, i.e.,
	$$\lim _{i \rightarrow \infty} L_{\mu}(\bar{h}_{K_{i}})=\sup \left\{L_{\mu}(f): f \in C^{+}\left(\omega\right)\right\}>0.$$
	Define
	$$r_{i}=\min\{r:rB\cap K_{i}\neq\emptyset \}.$$
	Suppose there exists a subsequence, still denoted by $r_{i}$, such that $r_{i}\rightarrow\infty$.  By the definition of $r_{i}$,  the support function of $K_{i} $ satisfies
	$\bar{h}_{K_{i}}\le r_{i},$
	and the set inclusion
	$K_{i}\subset \mathbb{R}^{n}\backslash r_{i}B$
	gives $$\gamma^{n}(K_{i})\le \gamma^{n}(\mathbb{R}^{n}\backslash r_{i}B)=1-\gamma^{n}(r_{i}B).$$
	We obtain the following estimate for $L_{\mu}(\bar{h}_{K_{i}})$ as $r_i\to\infty$
	\begin{align*} L_{\mu}(\bar{h}_{K_{i}})&=\gamma^{n}(K_{i})\exp\int_{\omega}\log\bar{h}_{K_{i}}  d\mu\\
		%&\le\gamma^{n}(K_{i})\exp\int_{\omega}\log r_{i}  d\mu\\
%		&=\gamma^{n}(K_{i})\exp\left( \mu(\omega)\log r_{i}\right) \\
		&\le (1-\gamma^{n}(r_{i}B)) r_{i}^{\mu(\omega)}\\
		&=\mathbb{P}(|x|>r_{i}) r_{i}^{\mu(\omega)}\rightarrow 0.
	\end{align*}
%	where $\mathbb{P}$ denotes  the probability associated with  the standard normal distribution in $\mathbb{R}^{n}$.	 The convergence to zero follows from the exponential decay of normal distribution.
This contradicts the fact that
$$\lim _{i \rightarrow \infty} L_{\mu}(\bar{h}_{K_{i}})=\sup \left\{L_{\mu}(f): f \in C^{+}\left(\omega\right)\right\}>0,$$
which follows $ L_{\mu}(\bar{h}_{K})>0$ for any $K\in \mathcal{K}(C,\omega)$.
	
	On the other hand, if $r_{i}\rightarrow 0$, then
	$\mathbb{P}(|x|>r_{i})\rightarrow 1,$
	and the same estimate yields
	\begin{align*}
		 L_{\mu}(\bar{h}_{K_{i}})&=\gamma^{n}(K_{i})\exp\int_{\omega}\log\bar{h}_{K_{i}}  d\mu\\
		&\le\mathbb{P}(|x|>r_{i}) r_{i}^{\mu(\omega)}\rightarrow 0,
	\end{align*} which again leads to a contradiction.
	
	The above contradictions imply that there exist two  positive constants $a$ and $b$ such that $a<\operatorname{dist}(o,\partial K_{i})<b$. By Lemma \ref{select}, there exists a  subsequence  $K_{i_{j}}\rightarrow K$
	for some $K\in\mathcal{K}(C,\omega)$ (see Lemma \ref{compace conv}). Thus, $\lim _{i \rightarrow \infty} L_{\mu}(\bar{h}_{K_{i_{j}}})=L_{\mu}(\bar{h}_{K})=\sup L_{\mu}(f)$.
	
	Consider a continuous function $f:\omega\rightarrow \mathbb{R}$. For sufficiently small $|t|$, let $\left[h_{t}\right]$   be the  Wulff shapes associated with  $\left(C, \omega, h_{t}\right) $, where
	$\log h_{t}(u)=\log \bar{h}_{K}(u)+t f(u),$ in other words, $h_{t}=\bar{h}_{K}e^{tf}$. From Lemma \ref{vari formulalog}, the variational derivative satisfies
	\begin{equation}
		0=\left.\frac{d}{d t}\right|_{t=0}L_{\mu}(h_{t})=\left( \gamma^{n}(K)\int_{\omega}fd\mu-\int_{\omega}fdC_{\gamma^{n}}K\right) \exp\int_{\omega}\log \bar{h}_{K}.
	\end{equation}
	Hence,
	\begin{equation*}
		 \gamma^{n}(K)\int_{\omega}fd\mu
=\int_{\omega}fdC_{\gamma^{n}}(K,\cdot),\quad \forall f\in C(\omega).
	\end{equation*}
	Then we conclude
	$$\mu=\frac{C_{\gamma^{n}}(K,\cdot)}{\gamma^{n}(K)}.$$
\end{proof}

To handle a general  measure  $\mu$ on $\Omega$, we employ an approximation method and the following uniform estimate for the functionals $L_{\mu_{i}}(\cdot)$ will play a crucial role.

\begin{lemma}\label{uniestilog}
	Suppose $\mu$ is a nonzero, finite Borel measure on $\Omega$. Let $\{\omega_{i}\}_{i\in\mathbb{N}}$ be a sequence of compact subsets of $\Omega$ satisfying
	$$\omega_{i}\subset \operatorname{int}\omega_{i+1}\quad \text{and}\quad \cup_{j\in\mathbb{N}} \omega_{j}=\Omega.$$  Let $\mu_{i}=\mu\llcorner\omega_{i}$ and $$L_{\mu_{i}}(\bar{h}_{K_{i}})=\sup \left\{L_{\mu_{i}}(f)=\gamma^{n}([f])\exp\int_{\omega_{i}}\log f d\mu_{i}: f \in C^{+}\left(\omega_{i}\right)\right\}.$$ Then there exists a constant $a>0$  such that $L_{\mu_{i}}(\bar{h}_{K_{i}})>a$ for any $i\in\mathbb{N}$.
	
\end{lemma}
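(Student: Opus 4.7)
The plan is to produce a single explicit positive continuous test function that witnesses a uniform lower bound for the suprema $L_{\mu_i}(\bar h_{K_i})$. Because each $K_i$ is a maximizer, it suffices to exhibit some $f^\ast \in C^+(\omega_i)$ with $L_{\mu_i}(f^\ast) \ge a$ for a constant $a>0$ independent of $i$, and the natural candidate is the restriction of the absolute support function of a translate of the cone.

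Concretely, I will fix any $v \in \operatorname{int} C$ and set $f_v(u) := -\langle u,v\rangle$. A short duality argument shows that $\langle u,v\rangle < 0$ for every $u \in C^\circ \setminus \{o\}$: if $\langle u,v\rangle = 0$ then, picking $\epsilon>0$ with $v+\epsilon B \subset C$, the inequality $\langle u, v+\epsilon w\rangle \le 0$ for every $w \in B$ forces $\langle u,w\rangle \le 0$ on $B$, hence $u=o$. Thus $f_v$ is strictly positive and continuous on the compact set $S^{n-1}\cap C^\circ$, which contains $\Omega$, so
\[
m := \min_{u\in S^{n-1}\cap C^\circ} f_v(u) > 0.
\]
Observe that $f_v$ is the absolute support function of the $C$-pseudo-cone $C+v$; however $C+v$ need not lie in $\mathcal K(C,\omega_i)$, so it enters the argument only through its support function.

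Next, for each $i$, the restriction $f_v|_{\omega_i}$ belongs to $C^+(\omega_i)$, and I will verify that the associated Wulff shape $[f_v|_{\omega_i}]$ contains $C+v$: if $y=v+z$ with $z\in C$, then $y\in C$ and $\langle u,y\rangle \le \langle u,v\rangle = -f_v(u)$ for every $u\in \omega_i\subset C^\circ$. Monotonicity of $\gamma^n$ then gives $\gamma^n([f_v|_{\omega_i}]) \ge \gamma^n(C+v) > 0$, while $\log f_v \ge \log m$ on $\Omega$ yields
\[
\int_{\omega_i}\log f_v\, d\mu_i \ge \min\{0,\log m\}\,\mu(\Omega),
\]
independently of $i$. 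Combining the two estimates,
\[
L_{\mu_i}(\bar h_{K_i}) \ge L_{\mu_i}(f_v|_{\omega_i}) \ge \gamma^n(C+v)\exp\bigl(\min\{0,\log m\}\,\mu(\Omega)\bigr) =: a > 0,
\]
which is the required uniform lower bound.

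The main conceptual point to appreciate is that the variational problem defining $L_{\mu_i}(\bar h_{K_i})$ runs over \emph{all} positive continuous functions on $\omega_i$, not just over support functions of members of $\mathcal K(C,\omega_i)$; this freedom is exactly what allows us to use the restriction of a globally well-behaved auxiliary function such as $f_v$. Once this is accepted, the obstacle reduces to verifying the containment $C+v\subseteq [f_v|_{\omega_i}]$ and the two-sided boundedness of $f_v$ on the compact set $S^{n-1}\cap C^\circ$, both of which are direct calculations.
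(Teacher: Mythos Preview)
Your proof is correct and follows the same strategy as the paper: plug a single explicit test function into each $L_{\mu_i}$ and bound both factors uniformly in $i$ by showing the associated Wulff shape contains a fixed translate of $C$. The only difference is the choice of test function---the paper takes $f\equiv 1$ on $\omega_i$, which makes the log-integral vanish identically and so reduces the whole lower bound to $\gamma^n(z+C)$ for a suitable $z\in\operatorname{int}C$, whereas you take $f_v(u)=-\langle u,v\rangle$ and must additionally bound the integral via $\log f_v\ge \log m$; your route is slightly longer but equally valid, and has the pleasant feature that the test function is itself the absolute support function of an actual $C$-pseudo-cone.
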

\begin{proof}
	Let $1|_{\omega_{i}}\in C^{+}\left(\omega_{i}\right)$ be the unit constant function on $\omega_{i}$. Recall that $[1|_{\omega_{i}}]$ denotes the Wulff shape associated with $(C,\omega_{i}, 1)$. For any $i\in\mathbb{N}$,
	\begin{align*}
		[1|_{\omega_{i}}]&=C\cap \bigcap_{u \in \omega_{i}}\{x\in\mathbb{R}^{n}: \langle x, u\rangle \le -1\} \\
		&\supset C\cap \bigcap_{u \in \overline{\Omega}}\{x\in\mathbb{R}^{n}: \langle x, u\rangle \le -1\} \\
		&=\bigcap_{u \in \overline{\Omega}}\{x\in\mathbb{R}^{n}: \langle x, u\rangle \le -1\}\\
		&\triangleq L.
	\end{align*}
	There exists $z\in \operatorname{int} C$ such $\langle z, u\rangle \le -1$ for each $u\in\overline{\Omega}$. For $x\in z+C$, $x=z+c$ for some $c\in C$. Then
	$$\langle x, u\rangle=\langle z+c, u\rangle=\langle z, u\rangle+\langle c, u\rangle\le-1+0=-1$$
	for any $u \in \overline{\Omega}$. Therefore, $z+C\subset L$,
	then we have $$\gamma^{n}([1|_{\omega_{i}}])\ge\gamma^{n}(L)\ge\gamma^{n}(z+C)>0$$
	for any $i\in\mathbb{N}$. From the definition of \eqref{varfun def}, we get
	\begin{align*}
		 L_{\mu_{i}}([1|_{\omega_{i}}])&=\gamma^{n}([1|_{\omega_{i}}])\exp\int_{\omega_{i}}\log 1 d\mu_{i}\\
		&=\gamma^{n}([1|_{\omega_{i}}])\\
		&\ge \gamma^{n}(z+C).
	\end{align*}
	Then $L_{\mu_{i}}(\bar{h}_{K_{i}})=\sup \left\{L_{\mu_{i}}(f): f \in C^{+}\left(\omega_{i}\right)\right\}\ge L_{\mu_{i}}([1|_{\omega_{i}}])\ge \gamma^{n}(z+C):=a$. The desired result follows.
	
\end{proof}

With these preparations above, we turn to the proof of the following result.

\begin{theorem}\label{cone minkowskilog}
	Let $\mu$ be a nonzero, finite Borel measure on $\Omega$. Then there exists a $C$-pseudo-cone $K$ such that
	\begin{align}\label{Cone-M}
\frac{C_{\gamma^{n}}(K,\cdot)}{\gamma^{n}(K)}=\mu.
\end{align}
\end{theorem}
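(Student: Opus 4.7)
The plan is to run essentially the same approximation argument as in the proof of Theorem \ref{cone minkowski}, but with the functional $I_\mu$ replaced by the logarithmic functional $L_\mu$ from \eqref{varfun def} and the Gaussian surface area measure replaced by the Gaussian cone measure. First I would fix a compact exhaustion $\omega_i \subset \operatorname{int}\omega_{i+1}$ of $\Omega$ with $\bigcup_{j} \omega_j = \Omega$, set $\mu_i := \mu \llcorner \omega_i$, and pick $i_0$ with $\mu_{i_0} \neq 0$. Since $\omega_{i_0} \subset \omega_i$ for $i \ge i_0$, we have $\mu_i(\omega_i) \ge \mu_{i_0}(\omega_{i_0}) > 0$, a fact that will be crucial below. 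For each $i \ge i_0$, Lemma \ref{compact cone minkowskilog} delivers $K_i \in \mathcal{K}(C,\omega_i)$ with $C_{\gamma^n}(K_i,\cdot)/\gamma^n(K_i) = \mu_i$, realized as a maximizer of $L_{\mu_i}$.

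The main obstacle is to show that $r_i := \min\{r : rB \cap K_i \neq \emptyset\}$ is bounded uniformly away from both $0$ and $\infty$. Using $\bar{h}_{K_i} \le r_i$ on $\omega_i$ together with $K_i \subset \mathbb{R}^n \setminus r_i B$, the same estimate as in Lemma \ref{compact cone minkowskilog} gives
$$L_{\mu_i}(\bar{h}_{K_i}) \le \left(1-\gamma^n(r_i B)\right) r_i^{\mu_i(\omega_i)} = \mathbb{P}(|x|>r_i)\, r_i^{\mu_i(\omega_i)}.$$
If $r_i \to \infty$ along a subsequence, the right-hand side tends to $0$ by the exponential decay of the Gaussian tail; if $r_i \to 0$, it also tends to $0$ because the exponent $\mu_i(\omega_i)$ is bounded below by the positive constant $\mu_{i_0}(\omega_{i_0})$. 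Either case contradicts the uniform lower bound $L_{\mu_i}(\bar{h}_{K_i}) \ge a > 0$ furnished by Lemma \ref{uniestilog}. Hence $0 < m < \operatorname{dist}(o, \partial K_i) < M$ uniformly in $i$, and Lemma \ref{select} extracts a subsequence $K_i \to K$ for some $C$-pseudo-cone $K$.

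To pass to the limit in the measure equation, I would first observe that each $K_i$ contains a common translate $z + C$ (since they are all $C$-pseudo-cones with uniformly bounded distance to the origin), so $\gamma^n(K_i) \ge \gamma^n(z+C) > 0$; combined with Lemma \ref{continuity of Gaussian volume} this gives $\gamma^n(K_i) \to \gamma^n(K) > 0$. Fix an arbitrary index $k$ and choose a compact $\beta \subset \Omega$ with $\omega_k \subset \operatorname{int}\beta$. Using the identity $\nu_{K_i}^{-1}(\omega_k) = \nu_{K_i^{(\beta)}}^{-1}(\omega_k)$ (and the analogue for $K$) from \cite{schneider weighted}, the convergence $K_i^{(\beta)} \to K^{(\beta)}$ from Lemma \ref{compace conv}, and the weak continuity provided by Lemma \ref{weal cone conve}, we obtain
$$\frac{C_{\gamma^n}(K_i,\cdot)}{\gamma^n(K_i)} \llcorner \omega_k \;\xrightarrow{w}\; \frac{C_{\gamma^n}(K,\cdot)}{\gamma^n(K)} \llcorner \omega_k.$$
Since the left-hand side equals $\mu_i \llcorner \omega_k = \mu \llcorner \omega_k$ for all sufficiently large $i$, we conclude that $C_{\gamma^n}(K,\sigma)/\gamma^n(K) = \mu(\sigma)$ for every Borel $\sigma \subset \omega_k$. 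Taking the union over $k$ and using $\bigcup_k \omega_k = \Omega$ yields the identity \eqref{Cone-M} on all of $\Omega$. Note that, in contrast to Theorem \ref{cone minkowski}, no free normalizing constant remains: the logarithmic variational formula in Lemma \ref{vari formulalog} forces the normalization $\gamma^n(K)$ automatically.
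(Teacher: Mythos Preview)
Your proposal is correct and follows essentially the same approach as the paper: apply Lemma \ref{compact cone minkowskilog} on a compact exhaustion, use the estimate $L_{\mu_i}(\bar{h}_{K_i})\le \mathbb{P}(|x|>r_i)\,r_i^{\mu_i(\omega_i)}$ together with the uniform lower bound from Lemma \ref{uniestilog} to rule out $r_i\to 0$ and $r_i\to\infty$, then extract a convergent subsequence via Lemma \ref{select} and pass to the limit exactly as in Theorem \ref{cone minkowski}. Your write-up is in fact slightly more careful than the paper's in two places: you make explicit the lower bound $\mu_i(\omega_i)\ge\mu_{i_0}(\omega_{i_0})>0$ needed for the $r_i\to 0$ case, and you spell out the approximation-to-the-limit step (using Lemma \ref{compace conv} and Lemma \ref{weal cone conve}) that the paper merely refers back to.
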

\begin{proof}
	Let $\{\omega_{i}\}_{i\in\mathbb{N}}$  be a sequence of compact subsets of $\Omega$ satisfying
	$$\omega_{i}\subset \operatorname{int}\omega_{i+1}\quad \text{and}\quad \cup_{j\in\mathbb{N}} \omega_{j}=\Omega.$$  There exists an index $j_{0}$ such that $\mu_{j_{0}}\neq0$. From Lemma \ref{compact cone minkowskilog}, for each  $j\ge j_{0}$, there exists a $C$-pseudo-cone $K_{j}\in\mathcal{K}(C,\omega_{j})$ satisfying
	$$\mu_{j}=\lambda_{j}C_{\gamma^{n}}(K_{j},\cdot),$$
	where the normalization constant is given by
	$$\lambda_{j}=\frac{1}{\gamma^{n}(K_{j})}.$$

	Define
	$r_{i}=\min\{r:rB\cap K_{i}\neq\emptyset \}$ as before.
	Suppose that there exists a subsequence, still denoted by $r_{i}$, such that $r_{i}\rightarrow\infty$. For $\mu_{i}\le\mu$, we obtain
	\begin{align*} L_{\mu_{i}}(\bar{h}_{K_{i}})
		&\le \gamma^{n}(K_{i})\exp\left( \mu_{i}(\omega_{i})\log r_{i}\right) \\
		&\le \gamma^{n}(K_{i})\exp\left( \mu(\Omega)\log r_{i}\right) \\
		&\le (1-\gamma^{n}(r_{i}B)) r_{i}^{\mu(\Omega)}\\
		&=\mathbb{P}(|x|>r_{i}) r_{i}^{\mu(\Omega)}\rightarrow 0.
\end{align*}
 This contradicts the  Lemma \ref{uniesti}, i.e., $L_{\mu_{i}}(\bar{h}_{K_{i}})>a>0$ for any $i\in\mathbb{N}$.
	
	On the other hand, $L_{\mu_{i}}(\bar{h}_{K_{i}})\rightarrow 0$ as $r_{i}\rightarrow 0$,  which again leads to a contradiction.
	
Then there exist two constants $a$ and $b$ such that $0<a<\operatorname{dist}(o,\partial K_{i})<b$, it follows from Lemma \ref{select} that there exists a subsequence $K_{i_{j}}\rightarrow K$
	for some $C$-pseudo-cone $K$.	
By the approximation method as in the proof of Theorem \ref{cone minkowski}, we conclude
(\ref{Cone-M}).
\end{proof}

The following Theorem asserts that one  can construct two different C-pseudo-cones with the same Gaussian cone measure.

\begin{theorem}\label{uni}
	Let $C\subset \mathbb{R}^{n}$ be a pointed, $n$-dimensional closed convex cone, and let $\omega\subset\Omega$ be a  nonempty compact  set, 	then there exist $K,L\in\mathcal{K}(C,\omega)$,  such that $C_{\gamma^{n}}(K,\cdot)=C_{\gamma^{n}}(L,\cdot)$, but $K\neq L$.
\end{theorem}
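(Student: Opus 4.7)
The plan is to adapt the construction used in Theorem \ref{ex1} to the Gaussian cone measure. By the rotation invariance of $\gamma^{n}$, I may fix any $b\in\omega$ and, after applying a suitable rotation, assume $b=-e_{n}$. I then set $H(t)=C\cap H(e_{n},t)$ and $K_{t}=H(t)+C$ for $t>0$. Exactly as in the proof of Theorem \ref{ex1}, $K_{t}\in\mathcal{K}(C,b)\subset\mathcal{K}(C,\omega)$, the outer normal at every point of $\partial_{i}K_{t}$ is $b$, so both $S_{\gamma^{n}}(K_{t},\cdot)$ and $C_{\gamma^{n}}(K_{t},\cdot)$ are concentrated at $\{b\}$. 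In particular, the Gaussian cone measure is completely determined by its total mass, which reduces the problem to producing two distinct heights at which this total mass coincides.

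Next I would compute this mass explicitly. Since $\bar h_{K_{t}}(b)=t$, Definition \ref{conedef} gives
\begin{equation*}
C_{\gamma^{n}}(K_{t},\{b\})=t\cdot S_{\gamma^{n}}(K_{t},\{b\})=\frac{1}{(\sqrt{2\pi})^{n}}\,h(t),\qquad h(t):=t\,g(t),
\end{equation*}
where $g(t)=\int_{H(t)}e^{-|y|^{2}/2}d\mathcal{H}^{n-1}(y)$ is precisely the function already analyzed in Theorem \ref{ex1}. Recall the factorization
\begin{equation*}
g(t)=e^{-t^{2}/2}\int_{tA}e^{-|x|^{2}/2}d\mathcal{H}^{n-1}(x),\qquad A=\{x\in\mathbb{R}^{n-1}:(x,1)\in H(1)\}.
\end{equation*}
Since $-e_{n}\in\operatorname{int}C^{\circ}$, the slice $H(1)$ is a bounded convex set, hence $A$ is bounded. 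Therefore $h(t)\to 0$ as $t\to 0^{+}$ (the inner integral tends to $0$ while the prefactor $t\to 0$), and $h(t)\to 0$ as $t\to\infty$ (the factor $t\cdot e^{-t^{2}/2}$ forces decay, while the inner integral is uniformly bounded by $(2\pi)^{(n-1)/2}$).

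The function $h:(0,\infty)\to(0,\infty)$ is continuous and strictly positive, yet vanishes at both endpoints; consequently, it cannot be injective and there must exist $t_{1}\neq t_{2}$ with $h(t_{1})=h(t_{2})$. Setting $K=K_{t_{1}}$ and $L=K_{t_{2}}$ yields $C_{\gamma^{n}}(K,\cdot)=C_{\gamma^{n}}(L,\cdot)$ while $K\neq L$. The inclusion $K,L\in\mathcal{K}(C,\omega)$ follows from the final paragraph of the proof of Theorem \ref{ex1} verbatim, since every $E\in\mathcal{K}(C,b)$ automatically lies in $\mathcal{K}(C,\omega)$ for any compact $\omega\ni b$. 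I do not expect any serious obstacle: the only delicate point is verifying the two-sided decay of $h(t)$, which is immediate from the Gaussian factor at infinity and the shrinking of $tA$ at the origin, both inherited from Theorem \ref{ex1}'s analysis of $g(t)$.
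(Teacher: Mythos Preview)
Your proposal is correct and follows essentially the same route as the paper's own proof: fix $b\in\omega$, rotate so that $b=-e_{n}$, set $K_{t}=H(t)+C$, and observe that $C_{\gamma^{n}}(K_{t},\cdot)$ is a point mass at $b$ of size $\tfrac{1}{(\sqrt{2\pi})^{n}}h(t)$ with $h(t)=t\,g(t)$, then use the two-sided decay of $h$ to find $t_{1}\neq t_{2}$ with $h(t_{1})=h(t_{2})$. Your justification of the limits $h(t)\to 0$ is actually slightly more detailed than the paper's, and the inclusion $\mathcal{K}(C,b)\subset\mathcal{K}(C,\omega)$ is handled exactly as in Theorem~\ref{ex1}.
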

\begin{proof}
	For any $b\in\omega$, by the rotational invariance of the Gaussian measure, we may assume (after applying a suitable rotation) that $b=-e_{n}\in\Omega$. Define the hyperplane section $H(t)=C\cap H(b,-t)$. Let
	$$A=\{x\in\mathbb{R}^{n-1}:(x,1)\in H(1)\}.$$
	Consider the Gaussian surface area, denoted by $g(t)$, of $H(t)$ (up to a constant) for $t>0$ $$g(t)=\int_{H(t)}e^{-\frac{|y|^{2}}{2}}d\mathcal{H}^{n-1}(y).$$   Let $h(t)=tg(t)$, then
	\begin{align*}	 h(t)=t\int_{H(t)}e^{-\frac{|y|^{2}}{2}}d\mathcal{H}^{n-1}(y) =te^{-\frac{t^{2}}{2}}\int_{tA}e^{-\frac{|x|^{2}}{2}}d\mathcal{H}^{n-1}(x).
	\end{align*}
	The  function satisfies $h(t)>0$ and $h(t)\rightarrow 0$ as $t\rightarrow 0$ or $t\rightarrow\infty$. By the intermediate value theorem, there exist $t_{1},t_{2}>0$ with $t_{1}\neq t_{2}$ such that $h(t_{1})=h(t_{2})$.
	
	Define the sets $K=H(t_{1})+C$ and $L=H(t_{2})+C$, then $K$ and $L$  are different C-pseudo-cones in $\mathcal{K}(C,b)$, and their Gaussian cone measures satisfy $$C_{\gamma^{n}}(K,b)=\bar{h}_{K}(b)S_{\gamma^{n}}(K,b)=\frac{1}{(\sqrt{2 \pi})^{n}}t_{1}g(t_{1})$$ and $$C_{\gamma^{n}}(L,b)=\bar{h}_{L}(b)S_{\gamma^{n}}(L,b)=\frac{1}{(\sqrt{2 \pi})^{n}}t_{2}g(t_{2}).$$
	From $h(t_{1})=h(t_{2})$, we have $t_{1}g(t_{1})=t_{2}g(t_{2})$.
	Note that $C_{\gamma^{n}}(K,\cdot)$ and $ C_{\gamma^{n}}(L,\cdot)$	have support in $b$, which implies $$C_{\gamma^{n}}(K,\cdot)= C_{\gamma^{n}}(L,\cdot).$$

\end{proof}

\begin{remark}
	Let $C\subset \mathbb{R}^{n}$ be a pointed, $n$-dimensional closed convex cone,	for each $b\in\Omega$.   After applying a suitable rotation, we may  assume $b=-e_{n}$. Define the section $H(t)=C\cap H(b, -t)$. From Theorem \ref{uni}, we obtain $h(t)=\frac{t}{(\sqrt{2\pi})^{n}}\int_{H(t)}e^{-\frac{|y|^{2}}{2}}d\mathcal{H}^{n-1}(y)$ is a bounded function, that is $h(t)\le   c$ for some $c>0$. Consequently, for any Borel measure $\mu$ on $\Omega$, if
		$\mu(b)>c$,  then there does not exist a $C$-pseudo-cone $K$ such that $\mu=C_{\gamma^{n}}(K,\cdot)$.
	
\end{remark}

\end{document}